\newtheorem{theorem}{Theorem}[section]
\newtheorem{lemma}[theorem]{Lemma}
\newtheorem{definition}[theorem]{Definition}
\newtheorem{remark}[theorem]{Remark}
\newcommand{\di}[1]{{\rm div}(#1)}
\newcommand{\diy}[1]{{\rm{div}_y}\big(#1\big)}
\newcommand{\dix}[1]{{\rm{div}_x}\big(#1\big)}
\newcommand{\n}{\mathbf n}
\newcommand{\ve}{\varepsilon}
\newcommand{\dd}{\mathrm{d}}
\newcommand{\eff}{{\rm eff}}
\title{Derivation of nonlinear time-dependent macroscopic conductivity for an electropermeabilization model via homogenization}
\author[1]{Tobias Gebäck}
\author[1]{Ioanna Motschan-Armen}
\author[1]{Irina Pettersson \thanks{Corresponding author, irinap@chalmers.se}}
\affil[1]{Chalmers University of Technology and Gothenburg University, Sweden}
\date{}
\begin{document}
\maketitle


\begin{abstract}
We study a phenomenological electropermeabilization model in a periodic medium  representing biological tissue. Starting from a cell-level model describing the electric potential and the degree of porosity, we perform dimension analysis to identify a relevant scaling in terms of a small parameter $\ve$ - the ratio between the cell and the tissue size. The electric potential satisfies electrostatic equations in the extra- and intracellular domains, while its jump across the cell membrane evolves according to a nonlinear law coupled with an ordinary differential equation for the porosity degree. We prove the well-posedness of the microscopic problem, derive a priori estimates, obtain formal asymptotics, and rigorously justify the expansion combining  two-scale convergence with monotonicity arguments. The resulting macroscopic model exhibits memory effects and a nonlinear, time-dependent effective current. It captures the nontrivial evolution of effective conductivity, including a characteristic drop reflecting the capacitive behavior of the lipid bilayer, in agreement with experimental data. Numerical computations of the effective conductivity confirm that, although microscopic conductivity is constant, tissue conductivity varies nonlinearly with electric field strength, showing a sigmoid trend. This suggests a rigorous mathematical explanation for experimentally observed conductivity dynamics.
\end{abstract}

\textbf{Keywords}:
Electropermeabilization, electroporation, electrical stimulation of biological cells, nonlinear time-dependent tissue conductivity, effective conductivity, homogenization, two-scale convergence on oscillating surfaces, memory effects, dynamic transmission condition, degenerate evolution equation.

\vspace{2mm}
\textbf{MSC2020}: 35K57, 35B27, 78A70, 45K05, 35K65.

\section{Introduction}

The goal of this work is to model the reversible electroporation phenomenon at the tissue scale based on the phenomenological cell-level electropermeabilization model presented in \cite{kavian2014classical}, using periodic homogenization. We derive a macroscopic model and show that the effective electric current is nonlinear with respect to the electric field and time-dependent. The main contribution of this work is to provide a rigorous mathematical explanation and numerical simulation of the nonlinear, sigmoid-shaped dependence of tissue conductivity on electric field strength and its time evolution, in qualitative agreement with experimental observations reported in the literature. 

Reversible electropermeabilization (often called electroporation) is a temporary increase in membrane permeability caused by short, high-voltage electric pulses. Unlike irreversible electroporation, the pores reseal once the external electric field is removed. This reversible process is widely used to transfer otherwise impermeable molecules—such as DNA plasmids or drugs—into living cells. However, even though electroporation mechanisms are well understood at the cellular level, there are very few dynamic electropermeabilization models on the tissue scale. In particular, computing the time-dependent electric field within biological tissues remains a crucial challenge.

Electroporation models for a single cell focusing on pore creation and evolution have been proposed in \cite{debruin1999modeling}, \cite{neu1999asymptotic}, \cite{krassowska2007modeling}. 
The phenomenological model by Kavian et al. \cite{kavian2014classical} retains the main features of the Neu–Krassowska model but with reduced complexity. An extension of \cite{kavian2014classical} is presented in \cite{leguebe2014conducting}, where conducting and permeable membrane states are treated separately. Interpretation of these phenomenological models in terms of pore creation and pore growth are given in Section 4, \cite{voyer2018dynamical}. A physically motivated phase-field model of electroporation is introduced in \cite{jaramillo2023phase}. A comparison of several static electroporation models can be found in \cite{jankowiak2020comparison}.

As highlighted in \cite{ivorra2009electric}, \cite{ivorra2010electrical}, tissue-level models that ignore the effect of permeabilization on membrane conductivity fail to reproduce experimental observations. Nonlinear models—where tissue conductivity depends on the electric field—fit experimental data better than linear models with constant conductivity \cite{corovic2013modeling}. To compute the distribution of electric potential under electroporation, electrostatic models of the form
${\rm div}(\sigma \nabla u)=0$ are often employed, with a sigmoid-like conductivity $\sigma=\sigma(|\nabla u|)$ which is supported by experimental observations. In \cite{ivorra2010electrical}, an exponential function was used to describe the strong dependence of membrane conductivity on transmembrane potential. However, the literature lacks a mathematical explanation of how nonlinear conductivity emerges from constant intra- and extracellular conductivities at the cell scale. In this work, we derive a macroscopic (effective) current density and demonstrate numerically that tissue-scale conductivity depends nonlinearly on the applied electric field and evolves in time during pulse application.

The problem under consideration contains a dynamic nonlinear transmission condition, which introduces technical difficulties for proving convergence. There are many works that have addressed homogenization of nonlinear transmission problems in stationary and time-dependent case. We refer specifically to \cite{amar2004evolution}, \cite{amar2013hierarchy}, and \cite{ammari2017towards}. In \cite{amar2004evolution} the authors study a dynamic problem describing the electrical conduction in biological tissues. The electric potential satisfies electrostatics equations in the intra- and extracellular space and a dynamic condition on the membrane separating these domains. It has been shown that, as the spatial period of the medium (size of cells) goes to zero, the electric potential can be approximated by a homogenized limit solving an elliptic equation with memory effects. 
Homogenization of an electroporation model by Neu and Krassowska \cite{neu1999asymptotic} has been performed in \cite{ammari2017towards}. Under a special choice of initial conditions (asymptotically small) and zero boundary excitation, membrane conductivity is constant in cell problems and thus does not depend on membrane potential (see Section 4.2, \cite{ammari2017towards}). This decouples the system and makes pore density independent of membrane potential. In contrast, our assumptions yield a cell problem coupled with an ODE for the degree of poration, which depends on membrane potential (see \eqref{eq:chi^0-gamma-1}), preserving the nonlinear dependence of membrane conductivity on membrane voltage.
Our problem is also similar to elliptic and parabolic problems involving imperfect interface (see e.g. \cite{lipton1998heat}, \cite{faella2007memory}, \cite{jose2009homogenization}, \cite{donato2010corrector}, \cite{donato2015homogenization}, \cite{gahn2016homogenization}). It is, however, coupled with an ODE for the degree of porosity, which makes the limit problem strongly coupled for the relevant scaling. Note also that in the literature the nonlinearities are assumed to satisfy strong monotonicity and Lipschitz conditions. To prove the homogenization result, one needs to pass to the limit in the nonlinear terms on the oscillating surfaces. Nonlinear transmission condition appear also in problems stated in domains with thin heterogeneous layers, as the thickness of the layer goes to zero. In \cite{neuss2007effective} such effective transmission conditions were derived for a
nonlinear reaction-diffusion system defined in such a domain, and properly scaled in the layer region. 
We refer also to \cite{piatnitski2017homogenization} where the authors study coupled nonlinear biomechanical models for plant tissue, and \cite{ptashnyk2015locally} for locally periodic unfolding for surface convergence. Under a different scaling in the microscopic problem ($\ve$ instead of $\ve^{-1}$ in \eqref{eq:orig-prob}), the homogenization yields a bidomain model in the context of cardiac electrophysiology (see e.g. \cite{franzone2002degenerate}, \cite{pennacchio2005},  \cite{amar2013hierarchy}, \cite{collin2018mathematical}, \cite{BenMroSaaTal2019}, \cite{jerez2020derivation}, \cite{AmaAndTim2021}, \cite{jerez2023derivation}). 

The novelty of the present work lies in both analytical and numerical aspects, specifically in the fact that we provide a mathematical explanation for the appearance of a time-dependent and nonlinear conductivity on the tissue scale. We prove the well-posedness of the microscopic problem for the original model \cite{kavian2014classical}, which is neither monotone nor Lipschitz due to the quadratic surface conductivity term. Our approach is inspired by \cite{liu2011existence}, where existence and uniqueness are proved under generalized monotonicity and coercivity conditions, though our nonlinearity does not directly satisfy these conditions. We perform a dimensional analysis that identifies a relevant scaling: introducing a small parameter $\varepsilon$ as the ratio between cell size and tissue sample size yields an $\varepsilon^{-1}$ scaling in the dynamic interface condition, similar to \cite{amar2004evolution}, \cite{amar2013hierarchy}, but with different initial and boundary conditions, leading to a coupled cell problem \eqref{eq:chi^0-gamma-1}. The main difficulty in passing to the limit is the lack of strong convergence on oscillating surfaces (cell membranes), which is required for nonlinear terms. We prove convergence for a modified nonlinearity under the assumption of bounded electric potential - justified in practice - ensuring Lipschitz continuity and enabling monotonicity arguments. This assumption aligns with standard approaches in nonlinear transmission problems (see \cite{amar2004evolution}, \cite{jose2009homogenization}, \cite{donato2010corrector}, \cite{jerez2023derivation}, \cite{pettersson2025nonlinear}). We derive a macroscopic equation for membrane potential exhibiting memory effects and compute numerically the effective potential and conductivity for a simplified case where tissue is placed between two parallel plates under constant voltage. Numerical results in two dimensions show that effective conductivity evolves over time with non-monotonic behavior, qualitatively agreeing with experiments (see Figure \ref{fig:effective_conductivity_over_time}, compare with \cite{ivorra2010electrical}, \cite{corovic2013modeling}). The drop of the effective conductivity can be interpreted by regarding the membrane as a capacitor consisting of two conductive plates separated by an insulator.
When an electric field is applied, charges accumulate on either side of the membrane which creates a transmembrane voltage. 
During this charging phase, ions are pulled toward the membrane, reducing their movement in the bulk solution, which causes a temporary drop in measured conductivity.
Once the membrane is sufficiently charged, defects form allowing ions to pass through the membrane, and conductivity increases sharply. This non-monotonic evolution is a key feature captured by our macroscopic model and agrees with experimental observations.  We also study the dependence of effective conductivity on applied voltage (see Figure \ref{fig:sigma_eff_over_g}), which exhibits a sigmoid-like trend consistent with experiments and electrostatic models used in practice.

The paper is organized as follows: Section~\ref{sec:micro-problem} recalls the cell-scale electropermeabilization model by Kavian et al. \cite{kavian2014classical}, performs dimensional analysis, formulates the microscopic problem in a periodic setting, proves its well-posedness, and describes the main homogenization result. Section~\ref{sec:expansion} presents formal asymptotic expansions revealing the coupled homogenized system \eqref{eq:coupled-gamma-1b} for leading terms and the corrector, and discusses possible decoupling strategies inspired by \cite{amar2004evolution}. Section~\ref{sec:homogenization-main} provides a priori estimates, proves convergence, and establishes well-posedness of the limit problem. Section~\ref{sec:numerics} introduces a numerical algorithm for solving the limit problem and computes the homogenized solution, focusing on effective conductivity and its dependence on time and applied voltage (see Section~\ref{sec:numerical_results}).
\section{Electropermeabilization model}
\label{sec:micro-problem}
In this section we recall the electropermeabilization model on the cell scale by Kavian et al. \cite{kavian2014classical} and formulate the microscopic problem.

Let $\hat \Omega \subset \mathbb R^3$  consist of the intracellular $\hat \Omega^c$  and extracellular $\hat \Omega^e$ part separated by the cell membrane $\hat \Gamma$. The conductivity of the medium and the electric potential are denoted by
\begin{align*}
\hat\sigma (\hat x)= \begin{cases}
\hat\sigma^c(\hat x) \,\,\mbox{in}\,\, \hat \Omega^c,\\
\hat\sigma^e(\hat x) \,\,\mbox{in}\,\, \hat \Omega^e,
\end{cases}
\quad 
u(\hat x)=\begin{cases}u^c(\hat x) \,\,\mbox{in}\,\, \hat \Omega^c,\\
u^e(\hat x) \,\,\mbox{in}\,\, \hat \Omega^e.
\end{cases}
\end{align*}
The conductivity $\hat \sigma$ is assumed to be strictly positive $\sigma(\hat x)\ge \underline{\sigma}>0$ and bounded from above $\hat\sigma \le \overline{\sigma}$. The electric potential is discontinuous through the membrane, and its jump through the membrane $[u] = u^c\big|_{\hat \Gamma}-u^e\big|_{\hat \Gamma}$ satisfies the following transmission condition
\begin{align*}
    \hat \sigma^c \nabla_{\hat x} u^c \cdot \n = \hat \sigma^e \nabla_{\hat x} u^e \cdot \n,\quad
    \hat c_m \partial_{\hat t} [u] + \hat S_m [u] = - \hat \sigma^c \nabla_{\hat x} u^c \cdot \n.
\end{align*}
Here $\hat S_m$ is the surface conductivity of the membrane, and $\n$ is the unit normal vector exterior to $\hat \Omega^c$.
 
The time-dependent membrane conductivity $\hat S_m$ is modeled by interpolating two values: the lipid conductivity $\hat S_L$ and the value $\hat S_{\rm ir}$ above which the permeabilization is not reversible.
\begin{align}
\label{def:S_m}
\hat S_m= \hat S_L + X(\hat S_{\rm ir}-\hat S_L),
\end{align}
where the interpolation parameter $X(\hat t,\hat x)$ solves the following initial value problem on $\hat \Gamma$:\\
\begin{align*}
    \begin{split}
    &\partial_{\hat t} X
    = \max \left(\frac{\beta([u])-X}{\hat \tau_{\rm ep}}\, ,\,
    \frac{\beta([u])-X}{\hat \tau_{\rm res}}\right),\quad
    X(0,\hat x)=X_0(\hat x).
    \end{split}
\end{align*}
The function $0\le \beta\le 1$ is a sigmoid satisfying assumptions \ref{H2} (see \cite{kavian2014classical} and \cite{leguebe2014conducting} for specific choices of $\beta$). The constants $\hat{\tau}_{\rm ep}$ and $\hat{\tau}_{\rm res}$ designate characteristic electroporation and resealing time, respectively, with $\hat{\tau}_{\rm ep}$ being several orders of magnitude smaller than $\hat{\tau}_{\rm res}$ (see Table \ref{tab:dimensionless_coeffs}). 
For a given jump of the membrane potential $[u]$, if $\beta([u]) - X \geq 0$ the electric pulse on the membrane is high enough to increase the membrane permeability, and the characteristic time is  $\hat \tau_{\rm ep}$. If, on the other hand, $\beta([u]) - X < 0$, the membrane is resealing in time $\hat \tau_{\rm res}$, so-called resealing time.

\subsection{Dimension analysis}

Nondimensionalization is a necessary first step in the asymptotic analysis of the problem. Let us define dimensionless variables and parameters. Note that the physical quantities are the electric potential $u$ [V], the conductivities $\hat \sigma^e, \hat \sigma^c$ [S/m], the characteristic membrane capacitance $\hat c_m$ [F/m$^2$], and the membrane surface conductivity $\hat S_m$ [S/m$^2$]. 
The conductivity values $S_L$ and $S_{\rm ir}$ have units S/m$^2$, while $X$ and $\beta$ are dimensionless. 

Let us denote by $L_0$ a characteristic length of a tissue sample and introduce
\begin{align*}
&x=\frac{\hat x}{L_0}, \quad t=\frac{\hat t}{T_0}.
\end{align*}
In an experimental setup, one can use a cubic piece of e.g. liver with two electrodes inserted. The distance between the electrodes would be several millimeters, so $L_0$ is in the range $10^{-3}$ to $10^{-2}$ m.  The applied voltage $g$ is chosen to be in the range $0$ to $500$ V for the characteristic length $L_0$, which corresponds to an electrical field strength of $0-50$ kV/m. 

In dimensionless coordinates, the equations read
\begin{align*}
&{\mbox{div}_{x}}({\sigma} \nabla_{x} u)  =0, \, &\mbox{in}\,\, &\Omega^c \cup \Omega^e, \nonumber \\
&{\sigma}^e \nabla_{x} u^e\cdot \n  = {\sigma}^c \nabla_{x} u^c\cdot \n, \, &\mbox{on}\,\, &\Gamma, \nonumber\\
& c_m \partial_{t} [u] + {S}_m(X)\, [u]
= - {\sigma}_c \nabla_{x} u^c\cdot \n
, \, &\mbox{on}\,\,&\Gamma,
\\
&\partial_{t} X
    = \max \left(\frac{\beta([u])-X}{\tau_{\rm ep}}\, ,\,
    \frac{\beta([u])-X}{\tau_{\rm res}}\right),&\mbox{on}\,\,&\Gamma, \nonumber\\
&u   =  g, \, &\mbox{on}\,\,& \partial \Omega.\nonumber
\end{align*}

The values of the parameters are given in the Table~\ref{tab:dimensionless_coeffs} and are calculated from the parameters in Table 1, \cite{kavian2014classical}.
\begin{table}[H]
\centering
\begin{tabular}{l|l|l|l}
    Variable & Symbol & Definition & Value \\[1mm]
    \hline
    Intracellular conductivity & $\sigma^c$ & $\frac{T_0 \hat{\sigma}_c}{\hat{c}_m L_0}$ & $4.789 \cdot 10^{-3}$ \\[1mm]
    Extracellular conductivity & $\sigma^e$ & $\frac{T_0 \hat{\sigma}_e}{\hat{c}_m L_0} $ & 0.0526\\[1mm]
    Membrane surface conductivity &$S_L$ & $\frac{T_0 \hat{S}_L}{\hat{c}_m}$ & $2 \cdot 10^{-4}$ \\[1mm]
    EP membrane surface conductivity & $S_{\rm ir}$ & $\frac{T_0 \hat{S}_{ir}}{\hat{c}_m}$ & $2.63 \cdot 10^4 $ \\[1mm]
    EP  time & $\tau_{\text{ep}}$ & $ \frac{\hat{\tau}_{\text{ep}}}{T_0}$ & 1 \\[1mm]
    Resealing time & $\tau_{\text{res}}$ & $\frac{\hat{\tau}_{\text{res}}}{T_0}$ & 1000 \\[1mm]
    Capacitance & ${c}_m$ & $\frac{\hat c_m}{\hat c_m}$ & 1 \\[1mm]
    Characteristic time & $T_0$ & $T_0$ & $1 \, \, \mu \text{s}$ \\[1mm]
    Characteristic distance & $L_0$ & $L_0$ & $1 \, \,\text{cm}$ \\[1mm]
    Applied voltage & $g$ & $g$& $0 - 500 \, \, V$ \\[1mm]
\end{tabular}
\caption{Dimensionless parameters.}
\label{tab:dimensionless_coeffs}
\end{table}
Introducing a small parameter $\ve >0$ corresponding to the ratio between the typical size of a cell $\ell_0$ in the range $10^{-5}-10^{-4}$ m and the size of a sample $L_0$ in the range $10^{-2}-10^{-3}$ m, we see that $\ve =O(10^{-2})$, and $\hat{\sigma}^c$ is multiplied by a small number $T_0/(\hat{c}_m L_0)$, of the same order as $\ve$. This motivates the scaling in the microscopic model \eqref{eq:orig-prob} in the next section.

\subsection{Problem statement and homogenization result}
Let us rescale the periodicity cell $Y=(0,1)^3$, consisting of the extra-, intracellular part, and the membrane $Y = Y^e \cup \Gamma \cup Y^c$, by a small parameter $\ve>0$ representing the ratio between the typical length of a cell and the diameter of the domain under consideration. Given a bounded domain $\Omega$ with a $C^2$ boundary $\partial \Omega$, we assume that the intra- and extracellular domains $\Omega_\ve^\alpha$, $\alpha=c, e$, are unions of entire cells in $\Omega=\Omega_\ve^c \cup \Gamma_\ve \cup \Omega_\ve^e$:
\begin{align*}
\Omega_\ve^\alpha = \bigcup_{k \in \mathbb{Z}^3} \ve(Y^\alpha + k), \quad
\Gamma_\ve = \bigcup_{k \in \mathbb{Z}^3} \ve(\Gamma + k), \,\,\alpha=c, e,
\end{align*}
where $k$ is the vector of translation between the considered cell and the reference cell $Y$. We include in $\Omega_\ve^c$ only those cells for which ${\rm dist}(\partial \Omega, \ve(Y^c +k))\ge \kappa_0 \ve$, where $\kappa_0>0$ is independent of $\ve$. The numerical computations in Section \ref{sec:numerics} are performed for the case when $Y^c$ is strictly included in $Y$ (see Figure \ref{fig:omega}), so for simplicity we assume that this is the case throughout the paper. The result is, however, true for the case when $Y^c$ intersects the boundary of $Y$, as in \cite{amar2004evolution}, \cite{hummel2000homogenization}.   

\begin{figure}
\centering
\begin{subfigure}{0.45\textwidth}
\centering
    \def\svgwidth{0.7\textwidth}
\begingroup%
  \makeatletter%
  \providecommand\color[2][]{%
    \errmessage{(Inkscape) Color is used for the text in Inkscape, but the package 'color.sty' is not loaded}%
    \renewcommand\color[2][]{}%
  }%
  \providecommand\transparent[1]{%
    \errmessage{(Inkscape) Transparency is used (non-zero) for the text in Inkscape, but the package 'transparent.sty' is not loaded}%
    \renewcommand\transparent[1]{}%
  }%
  \providecommand\rotatebox[2]{#2}%
  \newcommand*\fsize{\dimexpr\f@size pt\relax}%
  \newcommand*\lineheight[1]{\fontsize{\fsize}{#1\fsize}\selectfont}%
  \ifx\svgwidth\undefined%
    \setlength{\unitlength}{282.4151347bp}%
    \ifx\svgscale\undefined%
      \relax%
    \else%
      \setlength{\unitlength}{\unitlength * \real{\svgscale}}%
    \fi%
  \else%
    \setlength{\unitlength}{\svgwidth}%
  \fi%
  \global\let\svgwidth\undefined%
  \global\let\svgscale\undefined%
  \makeatother%
  \begin{picture}(1,0.70627464)%
    \lineheight{1}%
    \setlength\tabcolsep{0pt}%
    \put(0,0){\includegraphics[width=\unitlength,page=1]{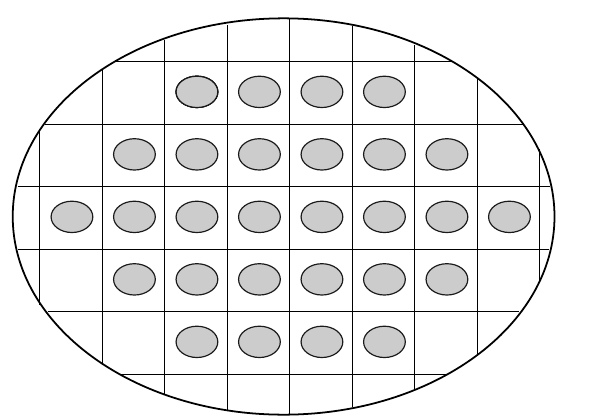}}%
    \put(0.90555806,0.50375245){\color[rgb]{0,0,0}\makebox(0,0)[lt]{\lineheight{1.25}\smash{\begin{tabular}[t]{l}$\Omega_\varepsilon^e$\end{tabular}}}}%
    \put(0,0){\includegraphics[width=\unitlength,page=2]{omega_eps.pdf}}%
    \put(0.91618073,0.15320456){\color[rgb]{0,0,0}\makebox(0,0)[lt]{\lineheight{1.25}\smash{\begin{tabular}[t]{l}$\Omega_\varepsilon^c$\end{tabular}}}}%
    \put(0,0){\includegraphics[width=\unitlength,page=3]{omega_eps.pdf}}%
    \put(0.75684082,0.68433773){\color[rgb]{0,0,0}\makebox(0,0)[lt]{\lineheight{1.25}\smash{\begin{tabular}[t]{l}$\Gamma_\varepsilon$\end{tabular}}}}%
    \put(0,0){\includegraphics[width=\unitlength,page=4]{omega_eps.pdf}}%
    \put(-0.00267987,0.59935648){\color[rgb]{0,0,0}\makebox(0,0)[lt]{\lineheight{1.25}\smash{\begin{tabular}[t]{l}$\partial \Omega$\end{tabular}}}}%
    \put(0,0){\includegraphics[width=\unitlength,page=5]{omega_eps.pdf}}%
  \end{picture}%
\endgroup%

    \caption{}
    \label{fig:omega}
\end{subfigure}
\begin{subfigure}{0.45\textwidth}
\centering
    \def\svgwidth{0.4\textwidth}
\begingroup%
  \makeatletter%
  \providecommand\color[2][]{%
    \errmessage{(Inkscape) Color is used for the text in Inkscape, but the package 'color.sty' is not loaded}%
    \renewcommand\color[2][]{}%
  }%
  \providecommand\transparent[1]{%
    \errmessage{(Inkscape) Transparency is used (non-zero) for the text in Inkscape, but the package 'transparent.sty' is not loaded}%
    \renewcommand\transparent[1]{}%
  }%
  \providecommand\rotatebox[2]{#2}%
  \newcommand*\fsize{\dimexpr\f@size pt\relax}%
  \newcommand*\lineheight[1]{\fontsize{\fsize}{#1\fsize}\selectfont}%
  \ifx\svgwidth\undefined%
    \setlength{\unitlength}{290.53346889bp}%
    \ifx\svgscale\undefined%
      \relax%
    \else%
      \setlength{\unitlength}{\unitlength * \real{\svgscale}}%
    \fi%
  \else%
    \setlength{\unitlength}{\svgwidth}%
  \fi%
  \global\let\svgwidth\undefined%
  \global\let\svgscale\undefined%
  \makeatother%
  \begin{picture}(1,0.95820571)%
    \lineheight{1}%
    \setlength\tabcolsep{0pt}%
    \put(0,0){\includegraphics[width=\unitlength,page=1]{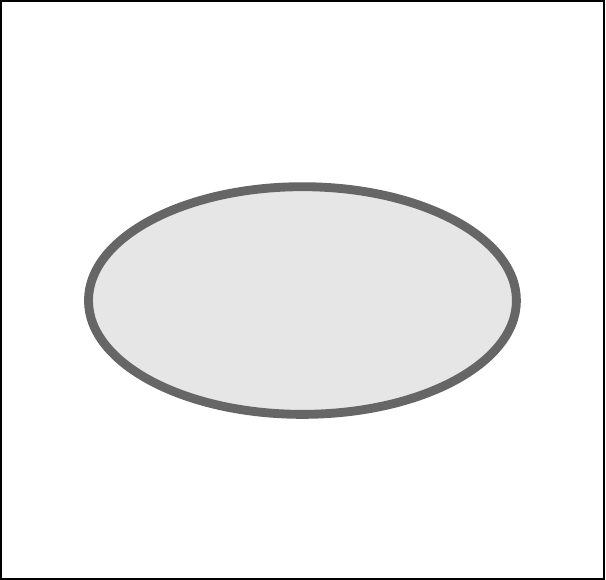}}%
    \put(0.41641138,0.46517138){\color[rgb]{0,0,0}\makebox(0,0)[lt]{\lineheight{1.25}\smash{\begin{tabular}[t]{l}$Y^c$\end{tabular}}}}%
    \put(0.72618629,0.77494621){\color[rgb]{0,0,0}\makebox(0,0)[lt]{\lineheight{1.25}\smash{\begin{tabular}[t]{l}$Y^e$\end{tabular}}}}%
    \put(0.20989493,0.77494621){\color[rgb]{0,0,0}\makebox(0,0)[lt]{\lineheight{1.25}\smash{\begin{tabular}[t]{l}$\Gamma$\end{tabular}}}}%
    \put(0,0){\includegraphics[width=\unitlength,page=2]{Y.pdf}}%
    \put(0.65129434,0.13276847){\color[rgb]{0,0,0}\makebox(0,0)[lt]{\lineheight{1.25}\smash{\begin{tabular}[t]{l}$\mathbf n$\end{tabular}}}}%
  \end{picture}%
\endgroup%

\end{subfigure}
\caption{(a) $\Omega=\Omega_\ve^c \cup \Gamma_\ve \cup \Omega_\ve^e$; (b) Periodicity cell $Y=Y^c \cup \Gamma \cup Y^e$.}
\label{fig:Y}
\end{figure}

We denote by $w_\ve^\alpha$, $\alpha=c, e$, the electric potential in $\Omega_\ve^\alpha$, and by $[w_\ve]=w_\ve^c\big|_{\Gamma_\ve} - w_\ve^e\big|_{\Gamma_\ve}$ its jump through the membrane $\Gamma_\ve$. The microscopic dynamical problem describing the evolution of the electric potential for $t\in[0,T]$ reads
\begin{align}
&\di{\sigma_\ve \nabla w_\ve}  =0, \, &\mbox{in}\,\, &(0,T]\times\Omega_\ve^c \cup \Omega_\ve^e, \nonumber \\
&[\sigma_\ve \nabla w_\ve \cdot \n]  = 0, \, &\mbox{on}\,\, &(0,T]\times\Gamma_\ve, \nonumber\\
\label{eq:orig-prob}
&\ve^{-1} (c_m \partial_t [w_\ve] + S_m(X_\ve)\, [w_\ve])
= - \sigma^c\big(\frac{x}{\ve}\big) \nabla w_\ve^c\cdot \n
, \, &\mbox{on}\,\,&(0,T]\times\Gamma_\ve,
\\
&\partial_t X_\ve
    = f([w_\ve], X_\ve),&\mbox{on}\,\,&(0,T]\times\Gamma_\ve, \nonumber\\
& [w_\ve](0,x) =V_\ve^{in}(x), \quad X_\ve(0,x)=X_\ve^{in}(x)\, &\mbox{on}\,\,&\Gamma_\ve, \nonumber \\
&w_\ve   = \ve^{-1} g, \, &\mbox{on}\,\,& (0,T]\times\partial \Omega. \nonumber
\end{align}
Here $\sigma_\ve=\sigma\big(\frac{x}{\ve}\big)$ is $\ve Y$-periodic, and
\begin{align}
\label{def:S_m_f}
\displaystyle
S_m(X)= S_L + X(S_{\rm ir}-S_L),\quad
f([u], X)=\max \left(\frac{\beta([u])-X}{\tau_{\rm ep}}\, ,\,
\frac{\beta([u])-X}{\tau_{\rm res}}\right).
\end{align}
To reduce the problem to one with homogeneous Dirichlet boundary conditions,  we make a change of unknowns and set 
\begin{align*}
w_\ve =u_\ve + \ve^{-1} p_\ve,
\end{align*}
where $p_\ve$, for each $t\in [0,T]$, solves the nonhomogeneous Dirichlet problem
\begin{align}
\label{eq:p_eps}
\begin{split}
&\di{\sigma_\ve \nabla p_\ve} = 0, \quad x\in \Omega,  \\
&p_\ve = g, \quad x\in \partial \Omega.
\end{split}
\end{align}
Since both $p_\ve$ and the flux $\sigma_\ve \nabla p_\ve \cdot \n$ are continuous through $\Gamma_\ve$, the jump through the membrane does not change $[u_\ve] = [w_\ve]$, and the new unknown function $u_\ve$ solves the following problem:
\begin{align}
&\di{\sigma_\ve \nabla u_\ve}  = 0, \, &\mbox{in}\,\, &(0,T]\times\Omega_\ve^c \cup \Omega_\ve^e, \nonumber \\
&[\sigma_\ve \nabla u_\ve  \cdot \n] = 0, \, &\mbox{on}\,\, &(0,T]\times\Gamma_\ve, \nonumber\\
\label{eq:microscopic-prob}
&\ve^{-1} (c_m \partial_t [u_\ve] + S_m(X_\ve)\, [u_\ve])
= - \sigma^c\big(\frac{x}{\ve}\big) \nabla u_\ve\cdot \n- \ve^{-1} \sigma^c\big(\frac{x}{\ve}\big) \nabla p_\ve\cdot \n
, \, &\mbox{on}\,\,&(0,T]\times\Gamma_\ve,
\\
&\partial_t X_\ve
    = f([u_\ve], X_\ve),&\mbox{on}\,\,&(0,T]\times\Gamma_\ve, \nonumber\\
& [u_\ve](0,x) =V_\ve^{in}(x), \quad X_\ve(0,x)=X_\ve^{in}(x)\, &\mbox{on}\,\,&\Gamma_\ve, \nonumber\\
&u_\ve   = 0, \, &\mbox{on}\,\,& (0,T]\times\partial \Omega. \nonumber
\end{align}
We impose the following assumptions:
\begin{enumerate}[label=(H\arabic*)]
\setcounter{enumi}{0}
\item \label{H1} $\sigma_\ve(x)=\sigma\big(\frac{x}{\ve}\big)$, where $\sigma(y)=\begin{cases}
    \sigma^c(y) \, \mbox{in} \, Y^c,\\
    \sigma^e(y) \, \mbox{in} \, Y^e
\end{cases}$ with $0< \underline{\sigma}\le \sigma \le \overline{\sigma}<\infty$. 

\item \label{H2} $f$ is defined in \eqref{def:S_m_f} where $\beta$ is even, nondecreasing, Lipschitz continuous on $\mathbb R$, and such that $
0 \leq \beta(\xi) \leq 1$, $\beta(\xi) \to 1$ as $\xi \to \infty$.

\item \label{H3} 

The initial conditions satisfy the uniform estimate:
\begin{align*}
\ve \|V_\ve^{in}\|_{H^{1/2}(\Gamma_\ve)} + \ve \|X_\ve^{in}\|_{H^{1/2}(\Gamma_\ve)} \le C.
\end{align*}
We assume in addition that $V_\ve^{in}$ and $X_\ve^{in}$ converge strongly two-scale to $V^{in}, X^{in}\in L^2(\Omega \times \Gamma)$ (see Definition \ref{def:2scale-Gamma_eps-pointwise}):
\begin{align*}
&\lim_{\ve \to 0}\ve \int_{\Gamma_\ve} |V_\ve^{in}(x)|^2\, \dd S = \int_\Omega \int_\Gamma |V^{in}(x,y)|^2\, \dd S\, \dd x,\\
&\lim_{\ve \to 0}\ve \int_{\Gamma_\ve} |X_\ve^{in}(x)|^2\, \dd S = \int_\Omega \int_\Gamma |X^{in}(x,y)|^2\, \dd S\, \dd x,
\end{align*}

\item \label{H4} $g\in C([0,T]; H^{1/2}(\partial \Omega))$, $\partial_t g \in L^2(0,T; H^{1/2}(\partial \Omega))$.

\item \label{H5} $g\in C([0,T]; H^{3/2}(\partial \Omega))$, $\partial_t g \in L^2(0,T; H^{3/2}(\partial \Omega))$.

\end{enumerate}
{
\begin{remark}
Under assumption \ref{H4}, $p_\ve \in C([0,T]; H^1(\Omega))$, $\partial_t p_\ve \in L^2(0,T; H^1(\Omega))$ and $\sigma_\ve \nabla p_\ve \cdot \n \in C([0,T]; H^{-1/2}(\Gamma_\ve))$ is well-defined. 
\end{remark}
\begin{remark}
For the existence of a solution, uniform a priori estimates in Lemma \ref{lm:apriori-est}, and the passage to the limit it is enough to assume \ref{H4}. A higher regularity \ref{H5} implies the $H^2(\Omega)$-regularity of $p_0$ solving \eqref{eq:p_0}, which in its turn yields the strong convergence of $\nabla p_\ve$ used in Lemma \ref{lm:compactness} (see \eqref{eq:convergence_p_eps}) when identifying the limits of the nonlinear functions.
\end{remark}
}
We prove the well-posedness of \eqref{eq:microscopic-prob} in Section \ref{sec:existence} and derive a priori estimates in Section \ref{sec: a_priori}.
The main goal is to derive a macroscopic model for \eqref{eq:microscopic-prob} and simulate the effective conductivity and the transmembrane potential (Section \ref{sec:numerics}). We will pass to the limit as $\ve \to 0$ using the two-scale convergence. 

To identify the limits of the nonlinear functions,  we impose an extra assumption on the membrane conductivity term replacing $S_m(X_\ve) [u_\ve]$ with a modified function 
\begin{align}
\label{eq:truncation}
S_M([u_\ve], X_\ve)= S_{\rm ir} [u_\ve] + S_L X_\ve T_M([u_\ve]), \quad  T_M([u_\ve]) = \max\big(-M, \min(M, [u_\ve])\big),
\end{align}
for some large $M>0$. 
This makes the nonlinear term $S_M$ satisfy a one-sided Lipschitz condition for $X_\ve$ solving \eqref{eq:microscopic-prob} and thus allows to utilize the monotonicity along the solutions (Lemma \ref{lm:Lip-S}) to identify the limits of the nonlinear terms and prove the strong two-scale convergence for $([u_\ve], X_\ve)$.
Namely, we prove the convergence (in the sense of Lemma \ref{lm:compactness}) of $(u_\ve, [u_\ve], X_\ve)$ solving
\begin{align}
&\di{\sigma_\ve \nabla u_\ve}  = 0, \, &\mbox{in}\,\, &(0,T]\times(\Omega_\ve^c \cup \Omega_\ve^e), \nonumber \\
&[\sigma_\ve \nabla u_\ve ] \cdot \n = 0, \, &\mbox{on}\,\, &(0,T]\times\Gamma_\ve, \nonumber\\
\label{eq:microscopic-prob-truncated}
&\ve^{-1} (c_m \partial_t [u_\ve] + S_M([u_\ve], X_\ve))
= - \sigma^c\big(\frac{x}{\ve}\big) \nabla u_\ve\cdot \n- \ve^{-1} \sigma^c\big(\frac{x}{\ve}\big) \nabla p_\ve\cdot \n
, \, &\mbox{on}\,\,&(0,T]\times\Gamma_\ve,
\\
&\partial_t X_\ve
    = f([u_\ve], X_\ve),&\mbox{on}\,\,&(0,T]\times\Gamma_\ve, \nonumber\\
& [u_\ve](0,x) =V_\ve^{in}(x), \quad X_\ve(0,x)=X_\ve^{in}(x)\, &\mbox{on}\,\,&\Gamma_\ve, \nonumber\\
&u_\ve   = 0, \, &\mbox{on}\,\,& (0,T]\times\partial \Omega. \nonumber
\end{align}
to a solution $(u_{-1}, u_0, X_0)$ of the coupled macroscopic problem
\begin{align}
&{\rm div}_x {\int_{Y^c\cup Y^e} \sigma(\nabla_y u_0 + \nabla_x u_{-1})\, dy}=0, \, & \mbox{in} \,\, &(0,T]\times\Omega, \nonumber\\
&\diy{\sigma (\nabla_y u_0 + \nabla_x u_{-1})}  =0, \, & \mbox{in} \,\, &(0,T]\times\Omega \times (Y^e\cup Y^c),  \nonumber\\
&[\sigma (\nabla_y u_0+\nabla_x u_{-1}) \cdot \n] =0, \, &\mbox{in} \,\, &(0,T]\times \Omega \times \Gamma, \nonumber \\
\label{eq:effective-problem-coupled-strong-intro}
& -\sigma^c \nabla_y u_0^c \cdot \n
=c_m \partial_t [u_0] + S_M([u_0], X_0) + \sigma^c\nabla_x u_{-1}\cdot \n &&\\
&\hspace{2.4cm} + \partial_{x_j} p_0\, {\sigma^c (\nabla_y \mathcal{M}_j + \mathbf{e_j})\cdot \n},\, &\mbox{on} \,\, &(0,T]\times \Omega \times \Gamma, \nonumber\\
& \partial_t X_0 = f([u_0], X_0), \, &\mbox{on} \,\, &(0,T]\times \Omega \times \Gamma,\nonumber\\
& [u_0](0,x,y)=V^{in}(x,y),\quad X_0(0,x,y)=X^{in}(x),\, &\mbox{on} \,\, &\Omega \times \Gamma, \nonumber\\
& u_{-1}\big|_{x\in \partial \Omega}=0, \quad u_0^e(x,\cdot) \,\, \mbox{is} \, \, Y-\mbox{periodic}.\nonumber
\end{align}
Here we denote $[u_0]=u_0^c - u_0^e$; $p_0$ is the strong $L^2(0,T; L^2(\Omega))$ limit  of $p_\ve$ solving the classical homogenized problem \eqref{eq:p_0}:
\begin{align*}
&\di{A \nabla p_0} = 0, \quad x\in\Omega,  \\
&p_0 = g, \quad x\in \partial \Omega,\nonumber
\end{align*}
with $A = \int_Y \sigma(\nabla \mathcal{M} +\text{Id})\, \dd y$, and $\mathcal{M}_j(y)$, $j=1, 2, 3$, solving standard cell problems
\begin{align*}
&\di{\sigma (\nabla \mathcal{M}_j+e_j)} = 0, \quad x\in Y,  \\
&\mathcal{M}_j(y) \,\, \mbox{is}\,\, Y-\mbox{periodic}.\nonumber
\end{align*}
Moreover, by Theorem 2.6 in \cite{allaire1992homogenization},
\begin{align*}
\begin{split}
    &\Big(\nabla p_\ve - \nabla p_0 - \nabla_y \mathcal{M}_j\big(\frac{x}{\ve}\big) \partial_{x_j} p_0\Big) \to 0 \quad \mbox{strongly in}\,\, L^2(0,T; L^2(\Omega)).
\end{split}
\end{align*}

In Section \ref{sec:homogenization} we prove our main homogenization result which we formulate below. A complete description of the convergence is given in Lemma \ref{lm:compactness}.
\begin{theorem}
\label{th:main}
    Under assumptions \ref{H1}--\ref{H3}, \ref{H5}, $(\ve u_\ve, [u_\ve], X_\ve)$ solving \eqref{eq:microscopic-prob-truncated} converges as $\ve \to 0$ to the solution $(u_{-1}, [u_0], X_0)$ of \eqref{eq:effective-problem-coupled-strong-intro} in the following sense:
    \begin{itemize}
        \item[(i)] 
        $\ve u_\ve \rightharpoonup u_{-1}$ weakly in $L^2(0,T; L^2(\Omega))$;\\
        $\ve \chi^\alpha\big(\frac{x}{\ve}\big) \nabla u_\ve(t,x) \, \stackrel{2}{\rightharpoonup }\, \chi^\alpha(y)\Big(\nabla_x u_{-1}(t,x) + \nabla_y u_0^\alpha(t,x,y)\Big)$ weakly two-scale in $L^2(0,T; L^2(\Omega))$ (Definition \ref{def:2-scale}), where $\chi^\alpha(y)$ is the characteristic function of $Y^\alpha$ ($\alpha=c, e$), and  $u_0^e(t,x,\cdot)$ is $Y$-periodic.

        \item[(ii)] $([u_\ve], X_\ve) \rightharpoonup ([u_0], X_0)$ weakly $t$-pointwise  two-scale in $L^2(\Gamma_\ve)^2$ in the sense of Definition \ref{def:2scale-Gamma_eps-pointwise}.
        \item[(iii)] Moreover, $([u_\ve], X_\ve)$ converges strongly two-scale to $([u_0], X_0)=(u_0^c - u_0^e, X_0)$:
        \begin{align}
        \label{eq:strong-2scale}
        \displaystyle \quad \lim_{\ve \to 0} \ve \int_{\Gamma_\ve} [u_\ve]^2\, \dd S = \int_{\Omega}\int_\Gamma [u_0]^2 \, \dd S\, \dd x, \qquad \lim_{\ve \to 0} \ve \int_{\Gamma_\ve} X_\ve^2\, \dd S = \int_{\Omega}\int_\Gamma X_0^2 \, \dd S\, \dd x.
        \end{align}
    \end{itemize}
\end{theorem}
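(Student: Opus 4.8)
The plan is to combine two-scale convergence with a monotonicity (one-sided Lipschitz) argument along solutions. Two-scale limits of the gradient and linear terms produce the macroscopic structure of \eqref{eq:effective-problem-coupled-strong-intro}; the nonlinear membrane terms $S_M([u_\ve],X_\ve)$ and $f([u_\ve],X_\ve)$ first converge only weakly two-scale, and the crux is to upgrade the convergence of $([u_\ve],X_\ve)$ on the oscillating membranes to the strong two-scale convergence \eqref{eq:strong-2scale}, which then identifies these limits; uniqueness of the limit system removes the need to pass to a subsequence.

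\textbf{Step 1 (compactness).} From the a priori estimates of Lemma~\ref{lm:apriori-est}, together with the classical periodic homogenization of \eqref{eq:p_eps} --- which under \ref{H5} gives $p_\ve\to p_0$ strongly in $L^2(0,T;H^1(\Omega))$, $p_0$ solving \eqref{eq:p_0} with $p_0\in L^2(0,T;H^2(\Omega))$, and the strong two-scale convergence \eqref{eq:convergence_p_eps} of $\nabla p_\ve$ to $\nabla_x p_0+\nabla_y(\mathcal{M}_j\,\partial_{x_j}p_0)$ --- I extract, as in Lemma~\ref{lm:compactness}, a subsequence along which $u_\ve\rightharpoonup u_{-1}$ weakly in $L^2(0,T;H^1(\Omega))$, $\nabla u_\ve$ two-scale converges to $\nabla_x u_{-1}+\nabla_y u_0$, and $([u_\ve],X_\ve)$ converges weakly $t$-pointwise two-scale on $\Gamma_\ve$ (Definition~\ref{def:2scale-Gamma_eps-pointwise}) to $([u_0],X_0)$, the initial data converging to $V^{in},X^{in}$ by \ref{H3}. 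Since $f$ is bounded (\ref{H2}), $\partial_tX_\ve$ is bounded and $X_\ve$ converges together with its time derivative, and likewise $[u_\ve]$ using the bound on $\partial_t[u_\ve]$; since $S_M$ and $f$ grow at most linearly, $S_M([u_\ve],X_\ve)$ and $f([u_\ve],X_\ve)$ converge weakly two-scale on $\Gamma_\ve$ to limits provisionally called $\chi$ and $\psi$.

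\textbf{Step 2 (linear limit).} Testing the weak form of \eqref{eq:microscopic-prob-truncated} with $\varphi(t,x)+\ve\,\varphi_1(t,x,x/\ve)$, with $\varphi_1$ smooth and $Y$-periodic in the fast variable, and letting $\ve\to0$: the bulk integral gives the macroscopic and cell equations for $(u_{-1},u_0)$ and flux continuity across $\Gamma$; the $\ve^{-1}$-scaled membrane terms pass to the limit, the time derivative and the nonlinearity producing $c_m\partial_t[u_0]+\chi$, and the strongly two-scale convergent flux $\ve^{-1}\sigma^c\nabla p_\ve\cdot\n$ producing precisely $\partial_{x_j}p_0\,\sigma^c(\nabla_y\mathcal{M}_j+\mathbf{e}_j)\cdot\n$; the ODE passes to the limit $t$-pointwise, $\partial_tX_0=\psi$. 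Hence $(u_{-1},u_0,[u_0],X_0)$ solves \eqref{eq:effective-problem-coupled-strong-intro} with $\chi,\psi$ in place of $S_M([u_0],X_0)$, $f([u_0],X_0)$.

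\textbf{Step 3 (strong convergence --- the main obstacle).} I first establish well-posedness of \eqref{eq:effective-problem-coupled-strong-intro} with the genuine nonlinearities: existence by Galerkin, uniqueness by a coupled Gronwall estimate using the one-sided Lipschitz bound of Lemma~\ref{lm:Lip-S} (available thanks to the truncation $T_M$ in \eqref{eq:truncation}) for the membrane term and the Lipschitz continuity of $f$. Let $(\bar u_{-1},\bar u_0,[\bar u_0],\bar X_0)$ denote its unique solution. Comparing the microscopic solution of \eqref{eq:microscopic-prob-truncated} directly with this limit solution --- testing the appropriate difference with oscillating functions obtained by density from the admissible traces $[\bar u_0](t,x,y)$, $\bar X_0(t,x,y)$ evaluated at $y=x/\ve$, and using flux continuity and the two-scale convergences of Steps 1--2 to eliminate the residual bulk-gradient and $p_\ve$-error terms on $\Gamma_\ve$ --- I obtain a differential inequality for $\ve\int_{\Gamma_\ve}|[u_\ve]-[\bar u_0]|^2+\ve\int_{\Gamma_\ve}|X_\ve-\bar X_0|^2$: the $S_0[u]$ part of $S_M$ is coercive, the cross contributions of $X_\ve T_M([u_\ve])[u_\ve]$ are controlled by $|T_M|\le M$ and the Lipschitz constant of $\beta$ via Young's inequality, and $f$ is monotone decreasing and Lipschitz in both arguments. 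Gronwall's inequality forces this quantity to zero, which is \eqref{eq:strong-2scale} (identifying $u_0=\bar u_0$, $X_0=\bar X_0$), and by continuity of $S_M$, $f$ gives $\chi=S_M([u_0],X_0)$, $\psi=f([u_0],X_0)$; uniqueness then removes the subsequence. The genuinely delicate point is exactly this step: the quadratic surface conductivity $S_1 X_\ve[u_\ve]^2$ is neither monotone nor Lipschitz, so two-scale convergence cannot pass to the limit in it directly; one must truncate (justified by boundedness of the potential) to recover the one-sided Lipschitz structure, then carry out the coupled PDE--ODE Gronwall estimate \emph{on the oscillating membranes}, verifying that every error term supported on $\Gamma_\ve$ --- in particular the $\ve^{-1}$-scaled $p_\ve$-flux and the fast-oscillating corrector $\bar u_0$ --- vanishes in the two-scale limit.
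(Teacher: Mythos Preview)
Your Steps~1 and~2 follow the paper closely (Lemma~\ref{lm:apriori-est}, Lemma~\ref{lm:compactness}, and the passage to \eqref{eq:weak-v_eps_w_eps-3} with unidentified limits $\overline{S},\overline{f}$). The divergence is in Step~3, and there the outline contains a real gap.

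You propose to compare $U_\ve=([u_\ve],X_\ve)$ directly with $\bar U^\ve(t,x)=(\,[\bar u_0],\bar X_0)(t,x,x/\ve)$ (approximated by smooth $\Phi_n^\ve$) and Gr\"onwall the scaled surface energy $\ve\|U_\ve-\Phi_n^\ve\|_{L^2(\Gamma_\ve)}^2$. After using the one-sided Lipschitz bound (your analogue of Lemma~\ref{lm:Lip-S}) for the $S_M$ and $f$ parts, the residual you must absorb is
\[
\ve\bigl(F_\ve-\partial_t\Phi_n^\ve-\mathbb A_\ve(\Phi_n^\ve),\,U_\ve-\Phi_n^\ve\bigr)_{L^2(\Gamma_\ve)},
\]
and its flux component is $\ve\langle\ve\,\mathcal L_\ve[\varphi_0^n],\,[u_\ve]-[\varphi_0^n]\rangle=\ve^2\!\int\sigma_\ve\nabla w_\ve^n\cdot\nabla(u_\ve-w_\ve^n)$, where $w_\ve^n$ solves the transmission problem \eqref{eq:microscopic-prob_stationary} with jump $[\varphi_0^n](t,x,x/\ve)$. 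This is a product of two sequences that are only \emph{weakly} two-scale convergent (you only know $\ve\nabla u_\ve\stackrel{2}{\rightharpoonup}\nabla_xu_{-1}+\nabla_yu_0$, and $\ve\nabla w_\ve^n$ has no strong two-scale limit without an additional corrector argument). ``Flux continuity and the two-scale convergences of Steps~1--2'' do not eliminate it: weak$\times$weak products do not pass to the limit, so the Gr\"onwall inequality you describe cannot be closed as stated. The $p_\ve$-error does vanish by \eqref{eq:convergence_p_eps}, but the bulk-gradient residual does not.

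The paper avoids precisely this obstruction by a Minty--Browder argument with an exponential weight $e^{-2\rho t}$ (Section~\ref{sec:homogenization}). Instead of Gr\"onwalling at fixed $\ve$, one writes the energy identity for $e^{-2\rho t}\|U_\ve\|_{H_\ve}^2$ with an \emph{arbitrary} smooth oscillating test function $\Phi$, uses the generalized monotonicity \eqref{eq:A-monotonicity} to obtain the inequality \eqref{eq:ineq-1}, and only then lets $\ve\to0$. At the limit level the bulk contribution becomes the definite quadratic form $\int\sigma|\nabla_x(u_{-1}-\varphi_{-1})+\nabla_y(u_0-\varphi_0)|^2$ (see \eqref{eq:ineq-5}); taking $\varphi_{-1}=u_{-1}+\delta\tilde\varphi_{-1}$, $\varphi_0=u_0+\delta\tilde\varphi_0$, $\phi_2=X_0+\delta\tilde\phi_2$ and sending $\delta\to0^\pm$ simultaneously yields $\overline S_M=S_M([u_0],X_0)$, $\overline f=f([u_0],X_0)$ and the strong two-scale convergence \eqref{eq:strong-2scale}. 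The point is that the weak$\times$weak product is never formed: one keeps $\langle\mathbb A_\ve(U_\ve),\Phi\rangle$ and $\langle\mathbb A_\ve(\Phi),U_\ve\rangle$ separately, each of which is a weak$\times$strong pairing and hence passes to the limit. Your direct-comparison route could in principle be salvaged, but it would require an independent strong corrector result for $\ve\nabla w_\ve^n$ that you have not supplied; the Minty device is the standard way to bypass it.
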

The coupled system \eqref{eq:effective-problem-coupled-strong-intro} cannot be decoupled, as in the classical homogenization \cite{allaire1992homogenization} and in \cite{amar2004evolution} or \cite{ammari2017towards}. We propose an ansatz for the corrector term $u_0$ (see \eqref{eq:u_0}) which allows to derive a limit equation for the leading term $u_{-1}$.  
More precisely, we can formulate an equation for the leading term
\begin{align}
\label{eq:u_(-1)-intro}
\begin{split}
-&\di{A\nabla u_{-1} + \int_0^t B(t,\tau) \nabla(u_{-1}+p_0)(\tau)\, d\tau}={\rm div}\, C(t, x), \quad x\in \Omega,\\
&u_{-1}=0, \quad x\in \partial \Omega,
\end{split}
\end{align}
where the coefficient $A$ \eqref{def:A} is computed in terms of a standard time-independent cell problem \eqref{eq:cell-M}; $B(t,\tau)$ \eqref{def:B} is computed in terms of a coupled, nonlinear, and time-dependent cell problem \eqref{eq:chi^0-gamma-1}, and $C$ \eqref{def:C} is computed in terms of a time-dependent cell problem \eqref{eq:T-eps^(-1)-g/eps}. This is a nonlinear integro-differential equation with memory effects. The effective electric current density vector (given as the term under the divergence in \eqref{eq:u_(-1)-intro}) depends in a nonlinear way on the electric potential and contains an instantaneous and a time-dependent part, depending on the history of the potential gradient. 

In Section \ref{sec:numerical_results}, for a specific case of tissue between two parallel plates in two dimensions, we rewrite the equation \eqref{eq:u_(-1)-intro} as $\di{\sigma_\eff(t, \nabla u_{-1}) \nabla u_{-1}}=0$ and compute the effective conductivity $\sigma_\eff$. We see that it depends in a nonlinear way on the applied voltage $g$ and shows nontrivial behaviour in time $t$.


\subsection{Well-posedness of the microscopic problem \eqref{eq:microscopic-prob}}
The weak formulation of \eqref{eq:microscopic-prob} reads: Find $(u_\ve, X_\ve):[0,T]\times \Omega \to \mathbf{R}^2$ such that $[u_\ve](0,x) =V_\ve^{in}(x)$, $X_\ve(0,x)=X_\ve^{in}(x)$ and satisfying
\begin{align}
\begin{split}
\label{eq:weak-v_eps_w_eps}
    &\frac{1}{\ve} \int_{\Gamma_{\ve}} (c_m \partial_t [u_{\ve}]+ S_m(X_{\ve}) [u_{\ve}])[\phi]  \, \dd S + \int_{\Omega_\ve^c \cup \Omega_\ve^e} \sigma_{\ve} \nabla u_{\ve} \cdot \nabla \phi \, \dd x = -\frac{1}{\ve}\int_{\Gamma_{\ve}} \big(\sigma^c\big(\frac{x}{\ve}\big) \nabla p_\ve \cdot \n\big) \,[\phi] \, \dd S,\\
    &\int_{\Gamma_{\ve}} \partial_t X_\ve\, \psi\, \dd S
    = \int_{\Gamma_{\ve}} f([u_\ve],X_\ve)\, \psi\, \dd S,
\end{split}
\end{align}
for any test function
$\phi\in  H^1(\Omega_\ve^c)\oplus H^1( \Omega_\ve^e)$ such that $\phi |_{\partial \Omega} = 0$, and $\psi \in L^2(\Gamma_\ve)$.

Note that, by equation \eqref{eq:p_eps}, the integral in the right-hand side of \eqref{eq:weak-v_eps_w_eps} can be written as
\begin{align*}
    \int_{\Gamma_{\ve}} \big(\sigma^c\big(\frac{x}{\ve}\big) \nabla p_\ve \cdot \n\big) \,[\phi] \, \dd S
    = 
    \int_{\Omega_\ve^c \cup \Omega_\ve^e} \sigma_\ve \nabla p_\ve \cdot \nabla \phi\, dx.
\end{align*}

\begin{theorem}
\label{th:existence-eps-prob}
    Let the assumptions \ref{H1}--\ref{H4} hold.  Then, for each fixed $\ve >0$, there exists a unique solution $([u_\ve], X_\ve)$ of problem \eqref{eq:microscopic-prob} such that
    \begin{align*}
    &[u_\ve] \in C([0,T]; L^2(\Gamma_\ve))\cap L^2(0,T; H^{1/2}(\Gamma_\ve)), \quad \partial_t [u_\ve] \in L^2(0,T; H^{-1/2}(\Gamma_\ve)),\\
    &u_\ve \in L^2(0,T; H^1(\Omega_\ve^c) \oplus H^1(\Omega_\ve^e)),\\
    &X_\ve \in C([0,T];H^{1/2}(\Gamma_\ve)), \quad \partial_t X_\ve \in C([0,T]; L^2(\Gamma_\ve)).
    \end{align*}
If in addition \ref{H5} holds, we have $[u_\ve] \in C([0,T]; H^{1/2}(\Gamma_\ve))$, $\partial_t [u_\ve] \in L^2(0,T; L^2(\Gamma_\ve))$.
\end{theorem}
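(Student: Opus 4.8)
The plan is to prove Theorem~\ref{th:existence-eps-prob} by reformulating the microscopic problem as an abstract evolution equation on the membrane $\Gamma_\ve$ for the pair $([u_\ve], X_\ve)$, with $u_\ve$ eliminated via a Steklov--Poincar\'e (Dirichlet-to-Neumann) operator, and then applying a fixed-point / Galerkin argument. First I would set up the harmonic extension: for a given boundary datum $\varphi \in H^{1/2}(\Gamma_\ve)$ interpreted as the jump $[u_\ve]$, define $u_\ve[\varphi] \in H^1(\Omega_\ve^c) \oplus H^1(\Omega_\ve^e)$ as the unique solution of $\di{\sigma_\ve \nabla u_\ve} = 0$ in $\Omega_\ve^c \cup \Omega_\ve^e$ with $[\sigma_\ve \nabla u_\ve \cdot \n] = 0$ on $\Gamma_\ve$, jump $[u_\ve] = \varphi$, and $u_\ve = 0$ on $\partial\Omega$; this is a standard elliptic transmission problem solved by Lax--Milgram using ellipticity \ref{H1}. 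The associated Steklov--Poincar\'e operator $\Lambda_\ve \colon H^{1/2}(\Gamma_\ve) \to H^{-1/2}(\Gamma_\ve)$, $\Lambda_\ve \varphi := -\sigma^c(x/\ve) \nabla u_\ve[\varphi] \cdot \n$, is bounded, linear, self-adjoint, and nonnegative (coercive on the quotient by constants); combined with the $p_\ve$ forcing term (which is an affine shift, well-defined in $C([0,T]; H^{-1/2}(\Gamma_\ve))$ by the Remark following \ref{H5}), the first equation of \eqref{eq:microscopic-prob} becomes the parabolic-type equation $c_m \partial_t [u_\ve] + S_m(X_\ve)[u_\ve] + \Lambda_\ve [u_\ve] = -\Lambda_\ve p_\ve|_{\Gamma_\ve}$ (more precisely $-\sigma^c(x/\ve)\nabla p_\ve\cdot\n$) on $\Gamma_\ve$.

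Next I would treat the coupled ODE. For fixed $[u_\ve] \in C([0,T]; L^2(\Gamma_\ve))$, the equation $\partial_t X_\ve = f([u_\ve], X_\ve)$ with $X_\ve(0) = X_\ve^{in}$ is a pointwise (in $x \in \Gamma_\ve$) scalar ODE; since $f$ from \eqref{def:S_m_f} is Lipschitz in its second argument (the $\max$ of two affine functions of $X$, with $\beta$ bounded and Lipschitz by \ref{H2}), it has a unique solution $X_\ve = \mathcal{S}_\ve([u_\ve])$, and one gets a Lipschitz estimate $\|\mathcal{S}_\ve([u_\ve]) - \mathcal{S}_\ve([\tilde u_\ve])\|_{C([0,T];L^2(\Gamma_\ve))} \le C_T \|[u_\ve] - [\tilde u_\ve]\|_{L^2(0,T;L^2(\Gamma_\ve))}$ via Gr\"onwall, together with preservation of the bound $0 \le X_\ve \le 1$ (or an a priori $L^\infty$ bound from the structure of $f$ and $\beta \le 1$) and propagation of $H^{1/2}$-regularity from $X_\ve^{in}$. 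Substituting $X_\ve = \mathcal{S}_\ve([u_\ve])$ back, the membrane equation becomes a single nonlinear parabolic equation for $v := [u_\ve]$: $c_m \partial_t v + \Lambda_\ve v + S_m(\mathcal{S}_\ve(v)) v = -\sigma^c(x/\ve)\nabla p_\ve \cdot \n$. I would solve this by a Galerkin/Faedo method in the Gelfand triple $H^{1/2}(\Gamma_\ve) \hookrightarrow L^2(\Gamma_\ve) \hookrightarrow H^{-1/2}(\Gamma_\ve)$: the linear part $\Lambda_\ve$ is coercive on $H^{1/2}$ modulo constants (handle the kernel via the mean or a Poincar\'e--Wirtinger argument on each cell, using the $\kappa_0\ve$-distance condition near $\partial\Omega$), the zeroth-order term $S_m(\mathcal{S}_\ve(v))v$ is a lower-order Lipschitz perturbation since $S_m \ge S_L > 0$ is bounded when $v$ is controlled, and energy estimates give $v \in C([0,T];L^2(\Gamma_\ve)) \cap L^2(0,T;H^{1/2}(\Gamma_\ve))$, $\partial_t v \in L^2(0,T;H^{-1/2}(\Gamma_\ve))$. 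Uniqueness follows by testing the difference of two solutions with the difference itself, using coercivity of $\Lambda_\ve$, the Lipschitz bound on $\mathcal{S}_\ve$, and Gr\"onwall; the extra $C([0,T];H^{1/2})$ and $\partial_t v \in L^2(L^2)$ regularity under \ref{H5} comes from the improved regularity $\sigma^c(x/\ve)\nabla p_\ve \cdot \n \in C([0,T];H^{1/2}(\Gamma_\ve))$ — using $H^2$-regularity of $p_\ve$ — and a standard bootstrap (testing with $\partial_t v$, or differentiating in time).

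\textbf{Main obstacle.} The delicate point is that the quadratic-type nonlinearity $S_m(X_\ve)[u_\ve]$ is genuinely \emph{not globally Lipschitz} (as the authors themselves stress, via the $S_{\mathrm{ir}} X_\ve [u_\ve]$ term with $S_{\mathrm{ir}}$ huge) and the problem as stated uses the untruncated $S_m$, not the truncation $S_M$ of \eqref{eq:truncation}. So the fixed-point map $v \mapsto S_m(\mathcal{S}_\ve(v))v$ is only \emph{locally} Lipschitz, and one must first establish an a priori $L^\infty$-in-space-and-time bound on $[u_\ve]$ (and hence on $X_\ve$) that closes the estimates — presumably exploiting a maximum principle for the harmonic extension (the extremal values of $u_\ve$ are controlled by $\|g/\ve\|_{L^\infty}$ type data, i.e.\ by the boundary excitation, giving $\|[u_\ve]\|_{L^\infty} \le C$ uniformly in $t$ for each fixed $\ve$) combined with the boundedness $0 \le \beta \le 1$ forcing $0 \le X_\ve \le 1$. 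Once such an a priori $L^\infty$ bound is in hand on the time interval $[0,T]$, the nonlinearity is effectively Lipschitz, the local solution extends globally, and the scheme above goes through; this is presumably where the paper invokes its adaptation of \cite{liu2011existence} on generalized monotonicity and coercivity. I would carry out the argument first on a short interval $[0, T^*]$ by Banach fixed point (contraction for $T^*$ small, using the Lipschitz bounds above), then use the a priori bounds to iterate up to $T$.
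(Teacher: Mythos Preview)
Your overall framework matches the paper's: reduce to an evolution equation on $\Gamma_\ve$ via a Dirichlet-to-Neumann operator (the paper calls it $\mathcal{L}_\ve$), verify its self-adjointness and coercivity, then run a Galerkin argument in the triple $H^{1/2}(\Gamma_\ve) \hookrightarrow L^2(\Gamma_\ve) \hookrightarrow H^{-1/2}(\Gamma_\ve)$. Two minor points: (i) because $u_\ve=0$ on $\partial\Omega$, the operator is genuinely coercive on all of $H^{1/2}(\Gamma_\ve)$, so no quotient-by-constants or Poincar\'e--Wirtinger argument is needed; (ii) the paper treats the coupled pair $(v,w)=([u_\ve],X_\ve)$ jointly in the Galerkin scheme rather than eliminating $X_\ve$ via a solution map $\mathcal{S}_\ve$, but this is a matter of taste.

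The substantive difference is in your ``main obstacle'' paragraph. You propose to tame the quadratic nonlinearity $S_m(X_\ve)[u_\ve]$ via an a priori $L^\infty$ bound on $[u_\ve]$ coming from a maximum principle for the harmonic extension. The paper does \emph{not} do this, and in fact remarks elsewhere that $L^\infty$ control of $[u_\ve]$ is nontrivial; the dynamic boundary condition makes a maximum-principle argument delicate. Instead, the paper observes that the easy bound $0\le X_\ve\le 1$ you already obtained (their Lemma~\ref{lm:bound-w}, a comparison argument for the scalar ODE) is all that is needed for \emph{existence}: with $|S_m(X_\ve)|\le S_{\mathrm{ir}}$, the term $S_m(X_\ve)[u_\ve]$ is sublinear in $[u_\ve]$ and the Galerkin energy estimates close globally without any pointwise information on $[u_\ve]$. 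For \emph{uniqueness}, where the cross term $(X_1-X_2)\,v_2\,(v_1-v_2)$ genuinely carries the quadratic structure, the paper uses the embedding $H^{1/2}(\Gamma_\ve)\hookrightarrow L^4(\Gamma_\ve)$ on the two-dimensional surface: bound the cross term by $\|X_1-X_2\|_{L^2}\|v_2\|_{L^4}\|v_1-v_2\|_{L^4}$, absorb $\|v_1-v_2\|_{L^4}^2\le C\|v_1-v_2\|_{H^{1/2}}^2$ into the coercivity of $\mathcal{L}_\ve$, and apply Gr\"onwall with the time-integrable weight $1+\|v_2\|_{H^{1/2}}^2$. This is precisely the ``generalized monotonicity along solutions'' they attribute to \cite{liu2011existence}; it is cleaner than a maximum-principle route and requires no $L^\infty$ bound on $[u_\ve]$.
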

\begin{remark}
The well-posedness of \eqref{eq:microscopic-prob-truncated} is classical because the nonlinearities are Lipschitz. It can also be proved in the same way as Theorem \ref{th:existence-eps-prob} for \eqref{eq:microscopic-prob}.
\end{remark}
\begin{proof}
In order to rewrite problem \eqref{eq:weak-v_eps_w_eps} as an evolution equation on $\Gamma_\ve$, we introduce the operator $\mathcal{L}_{\ve} : D(\mathcal{L}_{\ve}) \subset H^{1/2}(\Gamma_{\ve}) \to H^{-1/2}(\Gamma_{\ve})$ defined by
\begin{align}
\label{def:L_eps}
    \langle \mathcal{L}_{\ve} W , [\phi] \rangle = \int_{\Omega_\ve^c \cup \Omega_\ve^e} \sigma_{\ve} \nabla w_{\ve} \cdot \nabla \phi \, \dd x, \quad\phi \in H^1(\Omega_\ve^c)\oplus H^1( \Omega_\ve^e), \,\,\phi |_{\partial \Omega} = 0,
\end{align}
where $w_{\ve} \in H^1(\Omega_\ve^c)\oplus H^1( \Omega_\ve^e)$, for a given jump $W\in H^{1/2}(\Gamma_\ve)$, solves the following problem:
\begin{align}
-&\di{\sigma_\ve \nabla w_\ve}  =0, \, &\mbox{in}\,\, &\Omega_\ve^c \cup \Omega_\ve^e, \nonumber \\
&[\sigma_\ve \nabla w_\ve  \cdot \n] = 0, \, &\mbox{on}\,\, &\Gamma_\ve, \nonumber\\
\label{eq:microscopic-prob_stationary}
&w_{\ve}^c - w_{\ve}^e = W
, \, &\mbox{on}\,\,&\Gamma_\ve,
\\
&w_\ve   = 0, \, &\mbox{on}\,\,& \partial \Omega. \nonumber
\end{align}
{
A solution $w_\ve$ of \eqref{eq:microscopic-prob_stationary} exists and is unique. Indeed, we can construct $\tilde w_c \in H^1(\Omega_\ve^c)$ and $\tilde w_e \in H^1(\Omega_\ve^e)$ such that $W=\tilde w_e - \tilde w_c$ and $\tilde w_e\big|_{\partial \Omega}=0$. For example, by setting $\tilde{w}_c=0$ and solving
\begin{align*}
        &\Delta \tilde w_e =0 \quad\mbox{in}\,\, \Omega_\ve^e, \quad \tilde w_e|_{\Gamma_\ve}=W,\quad\tilde w_e|_{\partial \Omega}=0. 
\end{align*}
Next, we change the unknowns $z_\ve^\alpha = w_\ve^\alpha - \tilde w_\ve^\alpha$, which implies $[z_\ve] = [w_\ve] - [\tilde w] = 0$ on $\Gamma_\ve$.
The equations \eqref{eq:microscopic-prob_stationary} transform into
\begin{align}
\label{eq:no-jump}
    \begin{split}
        -&{\rm div}(\sigma_\ve \nabla z_\ve)={\rm div}(\sigma_\ve \nabla \tilde w_\ve), \quad x\in \Omega_\ve^c \cup \Omega_\ve^e,\\
        &[\sigma_\ve\nabla z_\ve \cdot \n]=-[\sigma_\ve\nabla \tilde w_\ve \cdot \n], \quad x\in\Gamma_\ve,\\
        &[z_\ve]=0, \quad x\in\Gamma_\ve,\\
        &z_\ve\big|_{\partial \Omega}=0.
    \end{split}
\end{align}
The weak formulation of \eqref{eq:no-jump} is: Find $z \in H_0^1(\Omega)$ such that 
\begin{align*}
\int_{\Omega} \sigma_\ve \nabla z_\ve\cdot \nabla \phi\, dx = - \int_{\Omega} \sigma_\ve \nabla \tilde w_\ve\cdot \nabla \phi\, dx
\end{align*}
holds for any $\phi \in H_0^1(\Omega)$.
The last problem has a unique solution by the Lax-Milgram lemma. Thus, changing back, we get a unique solution $w_\ve^\alpha=z_\ve^\alpha+\tilde w_\ve^\alpha$ of \eqref{eq:microscopic-prob_stationary}. 
}

In this way we obtain a system of evolution equations on $\Gamma_\ve$:
\begin{align}
\label{eq:prob-on-Gamma_eps}
\begin{split}
    &\ve^{-1} c_m \partial_t [u_{\ve}] + \mathcal{L}_{\ve} [u_{\ve}] + \ve^{-1} S_m(X_{\ve})[u_{\ve}] = -\ve^{-1}\sigma^c\big(\frac{x}{\ve}\big) \nabla p_\ve \cdot \n, \quad [u_\ve](0,x)=V_\ve^{in}(x) \quad \mbox{on}\ \Gamma_\ve,\\   
    &\partial_t X_\ve
    = f([u_\ve],X_\ve),\quad X_\ve(0,x)=X^{in}(x), \,\,\mbox{on}\,\,\Gamma_\ve.
\end{split}
\end{align}
A similar reduction to an evolution equation on a manifold was used in \cite{lions1969quelques} (Chapter 1, Section 11) for proving an existence result for nonlinear monotone parabolic and hyperbolic problems.

For each fixed $\ve>0$, the existence and uniqueness of a solution of \eqref{eq:prob-on-Gamma_eps} is given by Lemma~\ref{lm:existence-v_w}. Note that \eqref{eq:prob-on-Gamma_eps} contains a quadratic term $S_m(X_\ve) [u_\ve]$ which is neither monotone nor Lipschitz.
To show the well-posedness, we verify the properties of the linear operator $\mathcal{L}_\ve$.
By the definition of the operator $\mathcal{L}_{\ve} : D(\mathcal{L}_{\ve}) \subset H^{1/2}(\Gamma_{\ve}) \to H^{-1/2}(\Gamma_{\ve})$ \eqref{def:L_eps} we obtain directly that for all $W, Z \in H^{1/2}(\Gamma_\ve)$ the operator is defined on the whole $H^{1/2}(\Gamma_\ve)$ and is symmetric:
    \begin{align*}
        \langle\mathcal{L}_{\ve} W , Z\rangle = \int_{\Omega_\ve^c \cup \Omega_\ve^e} \sigma_{\ve} \nabla w_{\ve} \cdot \nabla z_{\ve} \, \dd x = \langle W, \mathcal{L}_{\ve} Z\rangle.
    \end{align*}
    Moreover, due to the trace inequality, $\mathcal{L}_\ve$ is coercive for each $\ve$,
    \begin{align}
    \label{eq: L_eps_coercivity}
        \langle \mathcal{L}_{\ve} W , W\rangle = \int_{\Omega_\ve^c \cup \Omega_\ve^e} \sigma_{\ve} | \nabla w_{\ve}|^2 \, \dd x \geq C \| \nabla w_\ve \|_{L^2(\Omega_\ve^c \cup \Omega_\ve^e)}\ge C(\ve) \|W\|_{H^{1/2}(\Gamma_\ve)}.
    \end{align}
    Note that the constant depends on $\ve$.
    Operator $\mathcal{L}_{\ve}$ is continuous
{
    \begin{equation*}
        \begin{aligned}
        \langle \mathcal{L}_{\ve} W , Z\rangle &= \int_{\Omega_\ve^c \cup \Omega_\ve^e} \sigma_{\ve} \nabla w_{\ve} \cdot \nabla z_{\ve} \, \dd x \leq \overline{\sigma} \| \nabla w_\ve \|_{L^2(\Omega_\ve^c \cup \Omega_\ve^e)} \| \nabla z_\ve \|_{L^2(\Omega_\ve^c \cup \Omega_\ve^e)} \\
        & \leq C(\ve) \| W \|_{H^{1/2}(\Gamma_\ve)} \| Z \|_{H^{1/2}(\Gamma_\ve)}.
    \end{aligned}
        \end{equation*}
The last inequality yields the growth condition for $\mathcal{L}_\ve$:
\begin{align*}
\|\mathcal{L}_\ve W\|_{H^{-1/2}(\Gamma_\ve)}
\le C(\ve) \| W \|_{H^{1/2}(\Gamma_\ve)}.
\end{align*}
}
By the definition of $f$, using that $\max(a,b)=\frac{1}{2}(a+b) + \frac{1}{2}|a-b|$, we get
\begin{align*}
    f(v,X) = \overline{\tau}_1 (\beta(v)-X) + \overline{\tau}_2|\beta(v)-X|, \quad \overline{\tau}_1 = \frac{1}{2}\Big(\frac{1}{\tau_{ep}} + \frac{1}{\tau_{res}}\Big), \, \overline{\tau}_2 = \frac{1}{2}\Big(\frac{1}{\tau_{ep}} - \frac{1}{\tau_{res}}\Big),
\end{align*}
and therefore
\begin{align*}
    |f(v_1,X) - f(v_2,X)|
    = \Big|\overline{\tau}_1 (\beta(v_1)-\beta(v_2))|
    + \overline{\tau}_2 |\beta(v_1)-X| - \overline{\tau}_2 |\beta(v_2)-X| \Big|.
\end{align*}
The Lipschitz continuity of $\beta$ and the reverse triangle inequality $|a|-|b| \le |a-b|$ yield the Lipschitz continuity of $f$ with respect to $v$ for a fixed $X$. Similarly we obtain the Lipschitz continuity in $X$ for a fixed $v$. Thus, $f(\cdot, \cdot)$ is Lipschitz:
\begin{align}
\label{eq:Lip-f}
(f(v_1,X_1) - f(v_2,X_2)\, , X_1-X_2)_{L^2(G)} 
\le C(\|v_1-v_2\|_{L^2(G)}^2 + \|X_1-X_2\|_{L^2(G)}^2).
\end{align}
In this way, the assumptions of Lemma \ref{lm:existence-v_w} are satisfied, and Theorem~\ref{th:existence-eps-prob} is proved.
\end{proof}


\section{Formal asymptotic expansions}
\label{sec:expansion}
The goal of this section is to obtain a coupled problem for the first two terms of the asymptotics \eqref{eq:coupled-gamma-1b}. We derive also a nonlinear equation for the leading term of the asymptotics with memory effects \eqref{eq:u_(-1)-gamma-1}.  

As the formal asymptotic expansions require differentiability, we assume in this section that $X_\ve$ satisfies the equation $\partial_t X_\ve = f([u], X)$ with a smooth function $f$. 
Assume that the initial functions can be expanded as 
\begin{align*}
    V_\ve^{in}&= {V^{in}(x,\frac{x}{\ve})}+ \ve V_1(x, \frac{x}{\ve}) + \cdots,\quad
     X_\ve^{in}= X^{in}(x) + \ve X_1^{in}(x,\frac{x}{\ve}) + \cdots.
\end{align*}
We postulate the following ansatz for the solution: 
\begin{align*} 
u_\ve^\alpha(t,x)  &\sim 
\ve^{-1}u_{-1}^\alpha(t,x,\frac{x}{\ve})+ \ve^0 u_0^\alpha (t,x,\frac{x}{\ve}) + \ve u_1^\alpha (t,x,\frac{x}{\ve})+ \cdots , \quad \alpha=\{e, c\},\\
X_\ve(t, x) &\sim X_0(t,x,\frac{x}{\ve}) + \ve X_1(t,x,\frac{x}{\ve}) + \cdots, \\
p_\ve(t,x) & \sim p_0(t,x) + \ve p_1(t,x,\frac{x}{\ve}) + \cdots,
\end{align*}
where the functions $u_k^e(x,\cdot)$ are $Y$-periodic. {The function $u_k^c$ are not assumed to be periodic since the cells are disconnected.} 
It is known that the corrector term can be written as $p_1(t,x,y)=\mathcal{M}(y)\cdot \nabla_x p_0$, and  the limit function $p_0$ solves the homogenized problem 
\begin{align}
\label{eq:p_0}
&\di{A \nabla p_0} = 0, \quad x\in\Omega,  \\
&p_0 = g, \quad x\in \partial \Omega,\nonumber
\end{align}
with $A = \int_Y \sigma(\nabla \mathcal{M} +\text{Id})\, \dd y$, and $\mathcal{M}_j(y)$, $j=1, 2, 3$, solving standard cell problems
\begin{align}
\label{eq:cell-M}
&\di{\sigma (\nabla \mathcal{M}_j+e_j)} = 0, \quad x\in Y,  \\
&\mathcal{M}_j(y) \,\, \mbox{is}\,\, Y-\mbox{periodic}.\nonumber
\end{align}
$\mathcal{M}$ is defined up to an additive constant and is unique under a normalization condition, for example can be chosen to satisfy $\int_Y \mathcal{M}\, \dd y=0$.
Equating the terms of order $\ve^{-3}$ in the bulk and of order $\ve^{-2}$ on the interface, we obtain the following problem in $Y^c \cup Y^e$ for $u_{-1}(t,x,y)$:
\begin{align*}
-&\diy{\sigma \nabla_y u_{-1}}  =0, \, & y \in \,\, &Y^e\cup Y^c,  \nonumber\\
& [\sigma \nabla_y u_{-1}  \cdot \n] = 0, \, & y \in \,\, &\Gamma, \nonumber\\
-&\sigma^c \nabla_y u_{-1}^c \cdot \n=c_m \partial_t [u_{-1}] + S_m(X_0)[u_{-1}],\, & y \in \,\, &\Gamma, \\
& [u_{-1}](0,x,y)=0,\, & y \in \,\, &\Gamma, \nonumber\\
& u_{-1}^e(t, x,\cdot) \,\, \mbox{is} \, \, Y-\mbox{periodic}.\nonumber
\end{align*}
The solution $u_{-1}$ exists and is unique up to an additive constant in $y$. Integrating the first equation by parts over $Y^c$, we obtain a homogeneous differential equation for $[u_{-1}]$ with the homogeneous initial condition, which yields $[u_{-1}]=0$ for all $t$, and thus $u_{-1}^e=u_{-1}^c = u_{-1}(t,x)$. This conclusion will be made rigorous in Lemma \ref{lm:compactness}.

Equating the terms of order $\ve^{-2}$ in the bulk and of order $\ve^{-1}$ on the interface, we obtain the following problem in $Y^c \cup Y^e$ for $u_0(t,x,y)$:
\begin{align}
\nonumber
-&\diy{\sigma \nabla_y u_0}  =\diy{\sigma \nabla_x u_{-1}}, \, & y \in \,\, &Y^e\cup Y^c,  \\
\nonumber
&[\sigma \nabla_y u_0 \cdot \n] =- [\sigma \nabla_x u_{-1} \cdot \n], \, & y \in \,\, &\Gamma, \\
\nonumber
-&\sigma^c \nabla_y u_0^c \cdot \n
=c_m \partial_t [u_0] + S_m(X_0)[u_0] + \sigma^c \nabla_x u_{-1} \cdot \n &&\\
&\hspace{2.2cm} {\sigma^c (\nabla_y \mathcal{M}_j + \mathbf{e}_j) \partial_{x_j} p_0 \cdot \n},\, & y \in \,\, &\Gamma, \nonumber\\
\label{eq:w_0-gamma-1b}
& \partial_t X_0 = f([u_0], X_0), \quad X_0(0,x,y)=X^{in}(x),\, & y \in \,\, &\Gamma, \\
& [u_0](0,x,y)=V^{in}(x,y),\, & y \in \,\, &\Gamma, \nonumber\\
& u_0^e(t,x,\cdot) \,\, \mbox{is} \, \, Y-\mbox{periodic}.\nonumber
\end{align}
Similar to the original problem, the solution $u_0$ exists and is unique, up to an additive constant in $y$. To ensure uniqueness, one can normalize by $\int_Y u_0\, \dd y=0$. Note that $u_0=u_0(t,x, y)$ depends on the fast variable $y$ if the initial condition $V^{in}\neq 0$. 

Because of the nonlinearity, we cannot decouple this problem to get a cell problem independent of the unknown $u_{-1}$, as in the classical two-scale limit or as in \cite{amar2004evolution}. Namely, the time-dependent cell problem \eqref{eq:chi^0-gamma-1} is coupled with the equation for $X_0$ that depends in its turn on $u_0$. Below we present one option to write $u_0$ in terms of $u_{-1}$. 
We are looking for $u_0$ on the form:
\begin{align}
    \label{eq:u_0}
     u_0(t,x,y) &= {\mathcal{M}(y) \cdot \nabla_x u_{-1}(t,x)} + z(t,x,y)+ \int_0^t \nabla (u_{-1}+p_0)(\tau,x) \cdot \chi^0(t,\tau,x,y) \, \dd \tau.
\end{align}
In \eqref{eq:u_0}, the first term will compensate for the term containing $\nabla_x u_{-1}$ in the first equation of \eqref{eq:w_0-gamma-1b}; the second term will ensure that the initial condition is satisfied; the third term will compensate for the contribution of the first term in the right-hand side of \eqref{eq:w_0-gamma-1b} on $\Gamma$.
We choose $\mathcal{M}(y)$ to satisfy the cell problem \eqref{eq:cell-M}.
The function $z(t,x,y)$, for a given initial condition $V^{in}(x,y)$, solves the problem below
\begin{align}
&\diy{\sigma \nabla_y z}  = 0, \, & y \in \,\, &Y^c\cup Y^e,  \nonumber\\
&[\sigma \nabla_y z \cdot \n] = 0, & y \in \,\, &\Gamma, \nonumber\\
&-\sigma^c \nabla_y z^c \cdot \n = c_m \partial_t [z] + S_m(X_0)[z], \, & y \in \,\, &\Gamma, \label{eq:T-eps^(-1)-g/eps}\\
&[z](0,y) = V^{in} \, & y \in \,\, &\Gamma, \nonumber\\
& z \,\, \mbox{is} \, \, Y-\mbox{periodic} \,\, \mbox{and} \, \int_Y z = 0.\nonumber
\end{align}


For each $\tau \in [0,T]$ the function $\chi_j^0(t,\tau,x,y)$ satisfies for $t \in [\tau,T]$ the evolution problem on the periodicity cell
\begin{align}
-&\diy{\sigma  \nabla_y \chi_j^0(t,\tau,x,y) }  = 0, \, & y \in \,\, &Y^c\cup Y^e,  \nonumber\\
& [\sigma  \nabla_y \chi_j^0(t,\tau,x,y)\cdot \n]  = 0  , \, & y \in \,\, &\Gamma, \nonumber\\
-&\sigma^c \nabla_y \chi_j^0(t,\tau,x,y) \cdot \n = c_m \partial_t[\chi_j^0](t,\tau,x,y) + S_m(X_0(t,x,y))[\chi_j^0](t,\tau,x,y), \, & y \in \,\, &\Gamma, \label{eq:chi^0-gamma-1}\\
& c_m[\chi_j^0](\tau,\tau,x,y) =- \sigma^c ( \nabla_y \mathcal{M}_j(y) + e_j) \cdot \n.\, & y \in \,\, &\Gamma,\nonumber \\
&\chi_j^0 \,\, \mbox{is} \, \, Y-\mbox{periodic}. \nonumber
\end{align}
where for $t \in [0,T]$
\begin{align}
\label{eq:X_0-gamma-1}
& \partial_t X_0(t,x,y) = f([u_0](t,x,y), X_0(t,x,y)), \quad X_0(0,x,y)=X^{in}(x),\, & y \in \,\, &\Gamma, \\
\label{eq:[w0]-gamma-1}
& [u_0](t,x,y)=[z](t,x,y) + \int_0^t \nabla (u_{-1}+p_0)(\tau, x) \cdot [\chi_j^0](t,\tau,x,y) \, \dd \tau,\, & y \in \,\, &\Gamma.
\end{align}

Note that the dependence of $\chi_j^0$ on $\tau$ is only through the solution interval, so that the initial condition is applied at time $t=\tau$ for each $\tau$. The problem for $\chi_j^0$ is thus solved in a triangular domain in the $(t,\tau)$-plane with evolution along the $t$-axis for each $\tau$ Figure~\ref{fig:2}. The cell problem \eqref{eq:chi^0-gamma-1} cannot be decoupled in the same way as it is done in \cite{amar2004evolution} due to the time-dependent factor $S_m(X_0)$ and the (possibly) nontrivial initial condition $V^{in}$. As a result, the dependence of $\chi_j^0(t, \tau, x, y)$ on $(t,\tau)$ is not simply through the difference $(t-\tau)$, and the time integral \eqref{eq:u_0} is not of convolution type. 







\begin{figure}
\centering
    \def\svgwidth{0.25\textwidth}
\begingroup%
  \makeatletter%
  \providecommand\color[2][]{%
    \errmessage{(Inkscape) Color is used for the text in Inkscape, but the package 'color.sty' is not loaded}%
    \renewcommand\color[2][]{}%
  }%
  \providecommand\transparent[1]{%
    \errmessage{(Inkscape) Transparency is used (non-zero) for the text in Inkscape, but the package 'transparent.sty' is not loaded}%
    \renewcommand\transparent[1]{}%
  }%
  \providecommand\rotatebox[2]{#2}%
  \newcommand*\fsize{\dimexpr\f@size pt\relax}%
  \newcommand*\lineheight[1]{\fontsize{\fsize}{#1\fsize}\selectfont}%
  \ifx\svgwidth\undefined%
    \setlength{\unitlength}{250.97483718bp}%
    \ifx\svgscale\undefined%
      \relax%
    \else%
      \setlength{\unitlength}{\unitlength * \real{\svgscale}}%
    \fi%
  \else%
    \setlength{\unitlength}{\svgwidth}%
  \fi%
  \global\let\svgwidth\undefined%
  \global\let\svgscale\undefined%
  \makeatother%
  \begin{picture}(1,0.77431814)%
    \lineheight{1}%
    \setlength\tabcolsep{0pt}%
    \put(0,0){\includegraphics[width=\unitlength,page=1]{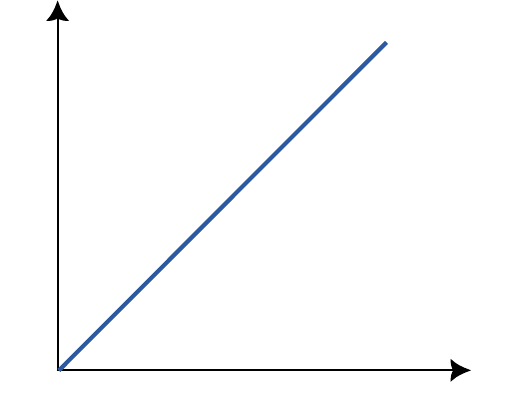}}%
    \put(0.81682714,0.00983273){\color[rgb]{0,0,0}\makebox(0,0)[lt]{\lineheight{1.25}\smash{\begin{tabular}[t]{l}$\tau$\end{tabular}}}}%
    \put(0.02566168,0.72006052){\color[rgb]{0,0,0}\makebox(0,0)[lt]{\lineheight{1.25}\smash{\begin{tabular}[t]{l}$t$\end{tabular}}}}%
    \put(0.41753703,0.31544791){\color[rgb]{0,0,0}\makebox(0,0)[lt]{\lineheight{1.25}\smash{\begin{tabular}[t]{l}$t=\tau$\end{tabular}}}}%
    \put(0,0){\includegraphics[width=\unitlength,page=2]{triangle.pdf}}%
    \put(0.14146102,0.7210576){\color[rgb]{0,0,0}\makebox(0,0)[lt]{\lineheight{1.25}\smash{\begin{tabular}[t]{l}$T$\end{tabular}}}}%
    \put(0,0){\includegraphics[width=\unitlength,page=3]{triangle.pdf}}%
  \end{picture}%
\endgroup%

\caption{Computational domain in the $(\tau, t)$-plane.}
\label{fig:2}
\end{figure}

Next, we equate the terms of order $\ve^{-1}$ in the bulk and $\ve^0$ on $\Gamma$ to derive the problem for $u_1(t,x,y)$:
\begin{align*}
&-\diy{\sigma \nabla_y u_1}  = \diy{\sigma \nabla_x u_0} + \dix{\sigma (\nabla_y u_0+\nabla_x u_{-1})}, \, & y \in \,\, &Y^c\cup Y^e,  \\
& [\sigma \nabla_y u_1 \cdot \n] =  - [\sigma \nabla_x u_0 \cdot \n], \, & y \in \,\, &\Gamma, \\
&- \sigma^c \nabla_y u_1^c \cdot \n = c_m \partial_t [u_1] + S_m(X_0)[u_1] + (S_{\rm ir}-S_L)X_1 [u_0] + \sigma^c \nabla_x u_0^c \cdot \n \\
&\hspace{7cm} + {\sigma^c\nabla_x p_1 \cdot \n + \sigma^c \nabla_y p_2 \cdot \n}, \, & y \in \,\, &\Gamma, \\
& \partial_t X_1 = f'([u_0], X_0)\, (\beta'([u_0])[u_1] - X_1), \quad X_1(0,x,y)= X_1^{in}(x,y), \, & y \in \,\, &\Gamma \\
& [u_1](0,x,y)= V_1(x,y), \, & y \in \,\, &\Gamma, \\
& u_1^e \,\, \mbox{is} \, \, Y-\mbox{periodic}.
\end{align*}
Integrating both sides of the equation for $u_1$ over $Y^c\cup Y^e$ we obtain 
\[
{\rm div}_x {\int_{Y^c\cup Y^e} \sigma(\nabla_y u_0 + \nabla_x u_{-1})\, dy}=0, \quad \mbox{in} (0,T]\times\Omega.
\]
Recalling the factorization $p_1=N(y)\cdot \nabla_x p_0$, we see that the triple $(u_{-1}, u_0, X_0)$ satisfies
the following coupled system:
\begin{align}
-&{\rm div}_x {\int_{Y^c\cup Y^e} \sigma(\nabla_y u_0 + \nabla_x u_{-1})\, dy}=0, \, & x \in \,\, &\Omega, \nonumber\\
-&\diy{\sigma (\nabla_y u_0 + \nabla_x u_{-1})}  =0, \, & y \in \,\, &Y^e\cup Y^c,  \nonumber\\
&[\sigma (\nabla_y u_0+\nabla_x u_{-1}) \cdot \n] =0, \, & y \in \,\, &\Gamma, \nonumber \\
\label{eq:coupled-gamma-1b}
-&\sigma^c \nabla_y u_0^c \cdot \n
=c_m \partial_t [u_0] + S_m(X_0)[u_0] + \sigma^c\nabla_x u_{-1}\cdot \n &&\\
&\hspace{4.5cm} + {\sigma^c (\nabla_y \mathcal{M}_j + \mathbf{e_j})\cdot \n \, \partial_{x_j} p_0},\, & y \in \,\, &\Gamma, \nonumber\\
& \partial_t X_0 = f([u_0]m, X_0), \quad X_0(0,x,y)=X^{in}(x),\, & y \in \,\, &\Gamma, \nonumber\\
& [u_0](0,x,y)=V^{in}(x,y),\, & y \in \,\, &\Gamma, \nonumber\\
& u_{-1}\big|_{\partial \Omega}=0, \quad u_0^e(x,\cdot) \,\, \mbox{is} \, \, Y-\mbox{periodic}.\nonumber
\end{align}
Taking into account the representation \eqref{eq:u_0}, \eqref{eq:coupled-gamma-1b} transfers into 
\begin{align}
\label{eq:u_(-1)-gamma-1}
\begin{split}
-&\dix{A\nabla(u_{-1}+p_0) + \int_0^t B(t,\tau) \nabla(u_{-1}+p_0)(\tau)\, d\tau}={\rm div}\, C(t, x), \quad x\in \Omega,\\
&u_{-1}=0, \quad x\in \partial \Omega,
\end{split}
\end{align}
where 
\begin{align}
    \label{def:A}
    A_{ij}&= \int_{Y^c\cup Y^e} \sigma(y)(\partial_{y_i}\mathcal{M}_j(y) + \delta_{ij})\, dy,\\
    \label{def:B}
    B_{ij}(t,\tau, x)&=\int_{Y^c\cup Y^e} \sigma(y) \partial_{y_i} \chi_j^0(t,\tau,x,y)\, \dd y,\\
    \label{def:C}
    C(t,x)&= \int_{Y^c\cup Y^e} \sigma(y) \nabla_y z(t,x,y)\, dy,
\end{align}
with $\mathcal{M}_j$ solving \eqref{eq:cell-M}, $\chi^0$ solving \eqref{eq:chi^0-gamma-1}, $z$ solving \eqref{eq:T-eps^(-1)-g/eps}, and $p_0$ being the solution of \eqref{eq:p_0}. Note that $B, C$ are defined in terms of solutions of auxiliary cell problems coupled with the equation for $X_0$ which is nonlinear in $[u_0]$, and $[u_0]$ depends on $\nabla u_{-1}$. 
The proof of the following lemma is classical.
\begin{lemma}
    $A$ defined by \eqref{def:A} is symmetric and positive definite.
\end{lemma}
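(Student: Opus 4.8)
\emph{Proof proposal.} This is the classical statement that the homogenized matrix associated with the cell problem \eqref{eq:cell-M} is symmetric and coercive, and the plan is to read both properties off the weak form of \eqref{eq:cell-M} after a harmless reduction of the domain of integration. Since $\Gamma$ has zero Lebesgue measure and $\sigma\in L^\infty(Y)$ by \ref{H1}, we may replace $\int_{Y^c\cup Y^e}$ by $\int_Y$ in \eqref{def:A}, so that $A_{ij}=\int_Y\sigma(y)\,(\partial_{y_i}\mathcal M_j(y)+\delta_{ij})\,\dd y$. The weak formulation of \eqref{eq:cell-M} reads: $\mathcal M_j\in H^1_{\mathrm{per}}(Y)$ and $\int_Y\sigma\,(\nabla_y\mathcal M_j+e_j)\cdot\nabla_y\phi\,\dd y=0$ for every $\phi\in H^1_{\mathrm{per}}(Y)$; note there is no transmission condition here, so $\mathcal M_j$ is globally $H^1$ on $Y$.

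For symmetry, I would insert $\phi=\mathcal M_i$ — an admissible test function because it is $Y$-periodic — into the weak form for $\mathcal M_j$, obtaining $\int_Y\sigma\,(\nabla_y\mathcal M_j+e_j)\cdot\nabla_y\mathcal M_i\,\dd y=0$. Adding this vanishing quantity to the definition of $A_{ij}$ gives the symmetrized representation
\[
A_{ij}=\int_Y\sigma(y)\,(\nabla_y\mathcal M_j+e_j)\cdot(\nabla_y\mathcal M_i+e_i)\,\dd y ,
\]
which is manifestly invariant under $i\leftrightarrow j$, hence $A=A^\top$.

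For positive definiteness, fix $\xi\in\mathbb R^3$ and set $\mathcal M_\xi:=\sum_j\xi_j\mathcal M_j\in H^1_{\mathrm{per}}(Y)$. Bilinearity of the symmetrized expression yields
\[
A\xi\cdot\xi=\int_Y\sigma(y)\,\big|\xi+\nabla_y\mathcal M_\xi(y)\big|^2\,\dd y\;\ge\;\underline\sigma\int_Y\big|\xi+\nabla_y\mathcal M_\xi\big|^2\,\dd y\;\ge\;0 .
\]
If $A\xi\cdot\xi=0$ for some $\xi$, then $\nabla_y\mathcal M_\xi=-\xi$ a.e. in $Y$; integrating over $Y$ and using that the gradient of a $Y$-periodic $H^1$ function has zero mean over the periodicity cell gives $0=\int_Y\nabla_y\mathcal M_\xi\,\dd y=-|Y|\,\xi$, so $\xi=0$. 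Hence $A\xi\cdot\xi>0$ for all $\xi\neq0$.

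There is no genuine obstacle in this argument: the only points deserving a word of justification are that $\mathcal M_i$ is an admissible test function (true by periodicity) and that a periodic gradient integrates to zero over $Y$; both are standard, and the whole proof mirrors the classical case in \cite{allaire1992homogenization}.
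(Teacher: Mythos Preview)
Your argument is correct and is exactly the classical route: the paper itself does not spell out a proof but simply states that it is classical, and what you wrote is precisely the standard symmetrized-energy computation (cf.\ \cite{allaire1992homogenization}) that the paper is alluding to. There is nothing to add or amend.
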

The proof of the following lemma follow the lines in \cite{amar2004evolution}.
\begin{lemma} \label{prop: B_sym}
    $B$ defined by \eqref{def:B} is symmetric.
\end{lemma}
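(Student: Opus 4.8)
The plan is to show $B_{ij}(t,\tau,x) = B_{ji}(t,\tau,x)$ by exploiting the evolution problem \eqref{eq:chi^0-gamma-1} satisfied by $\chi_j^0$ and a bilinear energy identity, mimicking the argument in \cite{amar2004evolution}. First I would fix $x$ (a parameter throughout) and introduce the bilinear form associated with the static part of the cell operator: for $Y$-periodic functions $v, w \in H^1(Y^c \cup Y^e)$ set
\[
a(v,w) = \int_{Y^c \cup Y^e} \sigma(y)\, \nabla_y v \cdot \nabla_y w\, \dd y .
\]
Testing the bulk equation and transmission condition in \eqref{eq:chi^0-gamma-1} for $\chi_j^0$ against $\chi_i^0$ (integration by parts over $Y^c$ and $Y^e$, using the flux continuity $[\sigma \nabla_y \chi_j^0 \cdot \n] = 0$), the boundary terms collapse onto $\Gamma$ and, invoking the third line of \eqref{eq:chi^0-gamma-1}, produce
\[
a(\chi_j^0, \chi_i^0) = -\int_\Gamma \big(\sigma^c \nabla_y \chi_j^{0,c}\cdot \n\big)[\chi_i^0]\, \dd S
= \int_\Gamma \big(c_m \partial_t[\chi_j^0] + S_m(X_0)[\chi_j^0]\big)[\chi_i^0]\, \dd S .
\]

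Next I would symmetrize. Since $a(\chi_j^0,\chi_i^0) = a(\chi_i^0,\chi_j^0)$ and $\int_\Gamma S_m(X_0)[\chi_j^0][\chi_i^0]\,\dd S$ is already symmetric in $(i,j)$, subtracting the $i\leftrightarrow j$ version of the identity gives the key relation
\[
c_m\int_\Gamma \Big(\partial_t[\chi_j^0]\,[\chi_i^0] - \partial_t[\chi_i^0]\,[\chi_j^0]\Big)\, \dd S = 0,
\]
i.e. $\frac{\dd}{\dd t}\Big(\int_\Gamma [\chi_j^0][\chi_i^0]\,\dd S\Big)$ is computed by $2\int_\Gamma \partial_t[\chi_j^0][\chi_i^0]\,\dd S$ only after recognizing that the antisymmetric part of $\partial_t[\chi_j^0][\chi_i^0]$ integrates to zero — so in fact $\int_\Gamma \partial_t[\chi_j^0][\chi_i^0]\,\dd S = \frac{1}{2}\frac{\dd}{\dd t}\int_\Gamma [\chi_j^0][\chi_i^0]\,\dd S$. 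Wait — the cleaner route: the relation above says the bilinear quantity $Q_{ij}(t) := \int_\Gamma [\chi_j^0](t,\tau)\,[\chi_i^0](t,\tau)\,\dd S$ satisfies, from the energy identity, an ODE whose right-hand side is manifestly symmetric in $(i,j)$; combined with the initial datum at $t=\tau$ being $[\chi_j^0](\tau,\tau,x,\cdot) = -c_m^{-1}\sigma^c(\nabla_y\mathcal{M}_j + e_j)\cdot\n$, one shows $a(\chi_i^0,\chi_j^0)$ — and hence the object one really wants — is symmetric. Concretely, I would instead directly relate $B_{ij}$ to $a$: from \eqref{def:B} and the cell equation for $\mathcal{M}_i$ one has, after an integration by parts (the standard computation identifying the homogenized matrix via the corrector),
\[
B_{ij}(t,\tau,x) = \int_{Y^c\cup Y^e}\sigma\,(\nabla_y\mathcal{M}_i + e_i)\cdot \nabla_y\chi_j^0\, \dd y + (\text{boundary term on }\Gamma),
\]
and the boundary term, via $c_m[\chi_j^0](\tau,\tau) = -\sigma^c(\nabla_y\mathcal{M}_j+e_j)\cdot\n$, lets me rewrite $B_{ij}$ at time $\tau$ and then propagate symmetry forward in $t$ using the symmetric energy identity above.

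The main obstacle, as the authors themselves flag in the remark preceding \eqref{eq:chi^0-gamma-1}, is that $\chi_j^0$ depends on $(t,\tau)$ \emph{not} merely through $t-\tau$ (because $S_m(X_0)$ is genuinely time-dependent and the initial condition is applied at $t=\tau$), so one cannot simply differentiate a convolution kernel. The symmetry must be extracted purely from the bilinear structure of the static operator $a(\cdot,\cdot)$ and the symmetry of the zeroth-order term $S_m(X_0)[\cdot][\cdot]$, plus the fact that the coupling to $X_0$ enters only through the scalar multiplier $S_m(X_0(t,x,y))$ which is common to all components $j$. I would therefore be careful to: (i) justify the integrations by parts (regularity of $\chi_j^0$ in $y$ up to $\Gamma$, which follows from elliptic regularity on each periodic subdomain), (ii) verify that $t\mapsto Q_{ij}(t)$ is absolutely continuous so the ODE argument is legitimate (this uses $\partial_t[\chi_j^0]\in L^2$ type bounds analogous to Theorem \ref{th:existence-eps-prob}), and (iii) check that the initial-time symmetry $B_{ij}(\tau,\tau,x) = B_{ji}(\tau,\tau,x)$ holds — this reduces to symmetry of the effective matrix $A$ together with the symmetry of the map $\mathcal{M}_j \mapsto$ (its associated flux through $\Gamma$), which is classical. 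Granting these, Gronwall-type uniqueness for the (linear, symmetric-source) evolution of $Q_{ij} - Q_{ji}$ with zero initial data forces $B_{ij} \equiv B_{ji}$ for all $t\ge\tau$.
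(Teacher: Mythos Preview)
Your energy identity
\[
c_m\int_\Gamma \Big(\partial_t[\chi_j^0]\,[\chi_i^0] - \partial_t[\chi_i^0]\,[\chi_j^0]\Big)\, \dd S = 0
\]
is correct, but it is equivalent to the statement that $Q_{ij}(t):=\int_\Gamma[\chi_i^0](t,\tau)\,[\chi_j^0](t,\tau)\,\dd S$ is symmetric in $(i,j)$---which is trivially true because the integrand is a product. Consequently your ``Gronwall-type uniqueness for the evolution of $Q_{ij}-Q_{ji}$ with zero initial data'' is vacuous: that quantity is identically zero for every $t$ and carries no information about $B$. After your own integration by parts (which is correct and is also done in the paper), one finds
\[
B_{ij}(t,\tau)=-c_m\int_\Gamma[\chi_i^0](\tau,\tau)\,[\chi_j^0](t,\tau)\,\dd S,
\]
a pairing that mixes the \emph{initial} value of one component with the \emph{current} value of the other. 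Your forward-only identity, which pairs two current values, does not control this mixed-time object, and no closed ODE in $t$ for $B_{ij}-B_{ji}$ follows from it.

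The paper supplies the missing ingredient: it introduces the time-reversed function $\tilde\chi_h^0(t,\tau,\cdot):=\chi_h^0(T-t+\tau,\tau,\cdot)$, which satisfies a backward-in-time version of \eqref{eq:chi^0-gamma-1} with final datum at $t=T$. Cross-testing the forward equation for $\chi_j^0$ against $\tilde\chi_h^0$ and the backward equation for $\tilde\chi_h^0$ against $\chi_j^0$, the $\partial_t$ contributions appear with \emph{opposite} signs and, after one integration by parts in $t$, combine so that integrating over $(\tau,T)$ yields the identity
\[
c_m\int_\Gamma[\chi_j^0](T,\tau)\,[\chi_h^0](\tau,\tau)\,\dd S \;=\; c_m\int_\Gamma[\chi_j^0](\tau,\tau)\,[\chi_h^0](T,\tau)\,\dd S,
\]
precisely the mixed-time symmetry needed to conclude $B_{hj}=B_{jh}$. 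This adjoint/backward step is essential exactly because, as you note, $S_m(X_0(t,\cdot))$ is genuinely time-dependent and the propagator is not of convolution type; a purely forward energy estimate cannot produce the required relation between values at $t=\tau$ and $t=T$.
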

\begin{proof}
    Let us define $\tilde{\chi}_h^0(t,\tau,x,y) = \chi_h^0(T-t+\tau,\tau,x,y) $ for $t \in [\tau,T]$ such that $\tilde{\chi}_h^0$ solves
    \begin{align}
        &-\diy{\sigma  \nabla_y \tilde{\chi}_h^0(t,\tau,x,y) }  = 0, \, & y \in \,\, &Y^c\cup Y^e,  \nonumber\\
& [\sigma  \nabla_y \tilde{\chi}_h^0(t,\tau,x,y)\cdot \n]  = 0  , \, & y \in \,\, &\Gamma, \nonumber\\
& {-}c_m \partial_t[\tilde{\chi}_h^0](t,\tau,x,y) + S_m(X_0(t,x,y))[\tilde{\chi}_h^0](t,\tau,x,y) = -\sigma^c \nabla_y \tilde{\chi}_h^0(t,\tau,x,y) \cdot \n , \, & y \in \,\, &\Gamma, \label{eq:chi_tilde} \\
& c_m[\tilde{\chi}_h^0](T,\tau,x,y) =- \sigma^c ( \nabla_y \mathcal{M}_h(y) + e_h) \cdot \n.\, & y \in \,\, &\Gamma,\nonumber \\
&\tilde{\chi}_h^0 \,\, \mbox{is} \, \, Y-\mbox{periodic}. \nonumber
    \end{align}
    We choose ${\chi}_j^0$ as a test function in \eqref{eq:chi_tilde} and obtain
    \begin{align*}
        &\int_{Y^c\cup Y^e} \sigma(y) \nabla \tilde{\chi}_h^0(t,\tau,x,y) \cdot \nabla \chi_j^0(t,\tau,x,y) \, \dd y = \int_\Gamma [\chi_j^0(t,\tau,x,y)] \sigma^c(y) \nabla_y \tilde{\chi}_h^0(t,\tau,x,y) \cdot \n \, \dd S \\
        & = c_m \int_\Gamma [\chi_j^0(t,\tau,x,y)] \partial_t [\tilde{\chi}_h^0(t,\tau,x,y)] \, \dd S - \int_\Gamma [\chi_j^0(t,\tau,x,y)] S_m(X_0(t,x,y)) [\tilde{\chi}_h^0(t,\tau,x,y)] \, \dd S,
    \end{align*}
    where we have used  the transition condition in \eqref{eq:chi_tilde}. Integrating in time $t$ over $(\tau,T)$, yields
    \begin{align} \label{eq:B_symmetry_1}
        \begin{split}
            &\int_\tau^T \int_{Y^c\cup Y^e} \sigma(y) \nabla \tilde{\chi}_h^0(t,\tau,x,y) \cdot \nabla \chi_j^0(t,\tau,x,y) \, \dd y \,\dd t \\
            & = c_m \int_\Gamma [\chi_j^0(T,\tau,x,y)] [\tilde{\chi}_h^0(T,\tau,x,y)] \, \dd S {-} c_m  \int_\Gamma [\chi_j^0(\tau,\tau,x,y)][\tilde{\chi}_h^0(\tau,\tau,x,y)] \, \dd S \\
            & {-} c_m \int_\tau^T \int_\Gamma [\tilde{\chi}_h^0(t,\tau,x,y)] \partial_t [\chi_j^0(t,\tau,x,y)]   \, \dd S \, \dd t\\
            & - \int_\tau^T \int_\Gamma S_m(X_0(t,x,y))[\chi_j^0(t,\tau,x,y)]  [\tilde{\chi}_h^0(t,\tau,x,y)] \, \dd S \, \dd t.
        \end{split}
    \end{align}
Next we use in a similar manner $\tilde \chi_h^0$ as a test function in \eqref{eq:chi^0-gamma-1}, and obtain
\begin{align}\label{eq:B_symmetry_2}
\begin{split}
    &\int_\tau^T \int_{Y^c\cup Y^e} \sigma(y) \nabla \chi_j^0(t,\tau,x,y) \cdot \nabla \tilde{\chi}_h^0(t,\tau,x,y) \, \dd y \,\dd t \\& =\int_\tau^T\int_\Gamma [\tilde{\chi}_h^0(t,\tau,x,y) ] \sigma^c(y) \nabla_y \chi_j^0(t,\tau,x,y)   \cdot \n \, \dd S \, \dd t \\
            & = - c_m \int_\tau^T \int_\Gamma [\tilde{\chi}_h^0(t,\tau,x,y)] \partial_t [\chi_j^0(t,\tau,x,y)]   \, \dd S \, \dd t \\
            & - \int_\tau^T \int_\Gamma S_m(X_0(t,x,y))[\chi_j^0(t,\tau,x,y)]  [\tilde{\chi}_h^0(t,\tau,x,y)] \, \dd S \, \dd t.
        \end{split}
    \end{align}
    Subtracting \eqref{eq:B_symmetry_1} from \eqref{eq:B_symmetry_2} we get
    \begin{align} \label{eq:B_symmetry}
        \begin{split}
             -c_m \int_\Gamma [\chi_j^0(T,\tau,x,y)] [\tilde{\chi}_h^0(T,\tau,x,y)] \, \dd S = -c_m  \int_\Gamma [\chi_j^0(\tau,\tau,x,y)][\tilde{\chi}_h^0(\tau,\tau,x,y)] \, \dd S.
        \end{split}
    \end{align}
    Computing the right-hand side explicitly using by the definition of $\tilde{\chi}_h^0$, that $\tilde{\chi}_h^0(\tau, \tau, x,y) = \chi_h^0(T,\tau,x,y)$ and the initial condition of \eqref{eq:chi^0-gamma-1}, we obtain
    \begin{align*} 
        \begin{split}
            -c_m  \int_\Gamma [\chi_j^0(\tau,\tau,x,y)][\tilde{\chi}_h^0(\tau,\tau,x,y)] \, \dd S &= -c_m  \int_\Gamma [\chi_j^0(\tau,\tau,x,y)][\chi_h^0(T,\tau,x,y)] \, \dd S \\
            &=  \int_\Gamma \sigma^c [\chi_h^0(T,\tau,x,y)] (\nabla_y \mathcal{M}_j(y) + \mathbf{e}_j) \cdot \n   \, \dd S.
        \end{split}
    \end{align*}
    Multiplying the test function $\chi_h^0(T,\tau,x,y)$ to \eqref{eq:cell-M} and integrating by parts we get that
\begin{align*}
    \int_{Y^c\cup Y^e} \sigma (\nabla_y \mathcal{M} + \text{Id}) \cdot \nabla_y \chi_h^0(T, \tau,x,y) \, \dd y = \int_\Gamma  \sigma^c [\chi_h^0(T,\tau,x,y)] (\nabla_y \mathcal{M}_j(y) + \mathbf{e}_j) \cdot \n   \, \dd S.
\end{align*}
Multiplying on the other side the test function $\mathcal{M}_j$ to \eqref{eq:chi^0-gamma-1} for $\chi_h^0(T, \tau,x,y)$ and integrating by parts we have
\begin{align*}
    \int_{Y^c\cup Y^e} \sigma \nabla_y \mathcal{M}_j \cdot \nabla_y \chi_h^0(T, \tau,x,y) \, \dd y = \int_\Gamma  \sigma^c \nabla_y \chi_h^0(T,\tau,x,y)  [\mathcal{M}_j(y)]  \cdot \n   \, \dd S = 0.
\end{align*}
Therefore we conclude that
\begin{align} \label{eq:B_symmetry_3}
        \begin{split}
            -c_m  \int_\Gamma [\chi_j^0(\tau,\tau,x,y)][\tilde{\chi}_h^0(\tau,\tau,x,y)] \, \dd S &=  \int_{Y^c\cup Y^e} \sigma \partial_{y_j}  \chi_h^0(T, \tau,x,y) \, \dd y.
        \end{split}
    \end{align}
    Computing now the left-hand side explicitly, using again by the definition of $\tilde{\chi}_h^0$, that $\tilde{\chi}_h^0(T, \tau, x,y) = \chi_h^0(\tau,\tau,x,y)$ and the initial condition of \eqref{eq:chi^0-gamma-1} for $\chi_h^0$, we obtain
    \begin{align*} 
        \begin{split}
            -c_m  \int_\Gamma [\chi_j^0(T,\tau,x,y)][\tilde{\chi}_h^0(T,\tau,x,y)] \, \dd S &= -c_m  \int_\Gamma [\chi_j^0(T,\tau,x,y)][\chi_h^0(\tau,\tau,x,y)] \, \dd S \\
            &=  \int_\Gamma \sigma^c [\chi_j^0(T,\tau,x,y)] (\nabla_y \mathcal{M}_h(y) + \mathbf{e}_h) \cdot \n   \, \dd S.
        \end{split}
    \end{align*}
    Multiplying equation \eqref{eq:cell-M} for $\mathcal{M}_h$ by the test function $\chi_j^0(T,\tau,x,y)$ and integrating by parts we get
\begin{align*}
    \int_{Y^c\cup Y^e} \sigma \nabla_y \chi_j^0(T, \tau,x,y) \cdot (\nabla_y \mathcal{M} + \text{Id})    \, \dd y = \int_\Gamma  \sigma^c [\chi_j^0(T,\tau,x,y)] (\nabla_y \mathcal{M}_h(y) + \mathbf{e}_h) \cdot \n   \, \dd S.
\end{align*}
Using $\mathcal{M}_h$ as a test function in \eqref{eq:chi^0-gamma-1} and integrating by parts we have
\begin{align*}
    \int_{Y^c\cup Y^e} \sigma \nabla_y \nabla_y \chi_j^0(T, \tau,x,y) \cdot \mathcal{M}_h\, \dd y = \int_\Gamma  \sigma^c \nabla_y \chi_j^0(T,\tau,x,y) [\mathcal{M}_h(y)]  \cdot \n    \, \dd S = 0.
\end{align*}
Therefore we conclude that
\begin{align} \label{eq:B_symmetry_4}
        \begin{split}
            -c_m  \int_\Gamma [\chi_j^0(T,\tau,x,y)][\tilde{\chi}_h^0(T,\tau,x,y)] \, \dd S 
            &=  \int_{Y^c\cup Y^e} \sigma \partial_{y_h}  \chi_j^0(T, \tau,x,y) \, \dd y.
        \end{split}
    \end{align}

    By the definition of $B$ and \eqref{eq:B_symmetry_3} and \eqref{eq:B_symmetry_4} used in the equality \eqref{eq:B_symmetry} we show that $B$ is symmetric
    \begin{align*}
        B_{ij} = \int_{Y^c\cup Y^e} \sigma(y) \partial_{y_i} \chi_j^0(t,\tau,x,y)\, \dd y = \int_{Y^c\cup Y^e} \sigma(y) \partial_{y_j} \chi_i^0(t,\tau,x,y)\, \dd y = B_{ji}.
    \end{align*}
Lemma \ref{prop: B_sym} is proved.
\end{proof}

\section{Homogenization}
\label{sec:homogenization-main}
In this section we justify the formal asymptotics in Section \ref{sec:expansion}. We start by deriving a priori estimates for $(u_\ve, X_\ve)$ uniform in $\ve$ (Lemma \ref{lm:apriori-est}).
Then we give definitions of the two-scale convergence in $\Omega$ and on the periodic oscillating surface $\Gamma_\ve$ (see Definitions \ref{def:2-scale}, \ref{def:2scale-Gamma_eps}, and \ref{def:2scale-Gamma_eps-pointwise}). In Lemma \ref{lm:compactness} we formulate the compactness result. After that we proceed with the passage to the limit in the weak formulation of \eqref{eq:weak-v_eps_w_eps}. Note that, by the a priori estimates, we can pass to the limit, but not identify the limits $\overline{S}$ and $\overline{f}$ of the nonlinear functions $S_m(X_\ve)[u_\ve]$ and $f([u_\ve], X_\ve)$, respectively. To do so, we impose an extra assumption on the membrane conductivity term, introducing a truncation \eqref{eq:S(v,X)}. 
Finally, we prove the well-posedness of the macroscopic problem \eqref{eq:effective-problem-coupled-strong}. 

\subsection{A priori estimates} \label{sec: a_priori}
The main result of this section is given in the following lemma.
\begin{lemma}[A priori estimates]
\label{lm:apriori-est} Let $(u_{\ve}, [u_\ve], X_\ve)$ solve \eqref{eq:microscopic-prob} and assume that \ref{H1}--\ref{H4} are satisfied. Then there exists a constant $C$ independent of $\ve$ such that the following estimates hold:
\begin{enumerate}[label=(\roman*)]
    \item \label{lem: ape1} 
    $\displaystyle \sup_{t\in[0,T]} \Big(\ve\int_{\Gamma_\ve} [u_{\ve}]^2\, dS
    + \ve\int_{\Gamma_\ve} |X_{\ve}|^2\, dS\Big)
    \leq C \Big(\ve\|V_\ve^{in}\|_{L^2(\Gamma_\ve)}^2 +\ve \| X_{\ve}^{in} \|_{L^2(\Gamma_{\ve})}^2 + \sup_{0\le t\le T}\| g \|_{H^{1/2}(\partial \Omega)}^2\Big)$.
    \item \label{lem: ape2} $\displaystyle \ve^2 \int_{\Omega_\ve^c \cup \Omega_\ve^e} (|u_\ve|^2 + | \nabla u_{\ve} |^2) \, \dd x \, \dd t  \leq C \Big( \ve \|V_\ve^{in}\|_{L^2(\Gamma_\ve)}^2 + \sup_{0\le t\le T}\| g \|_{H^{1/2}(\partial \Omega)}^2\Big)$.
    \item \label{lem: ape3} 
    $
    \displaystyle
   \ve \int_0^t \int_{\Gamma_{\ve}} |\partial_t [u_{\ve}]|^2 \, \dd S \, \dd \tau 
   + \ve \int_0^t \int_{\Gamma_{\ve}} |\partial_t X_{\ve}|^2 \, \dd S \, \dd \tau\\
   \leq C \Big( \ve \| V_{\ve}^{in} \|_{L^2(\Gamma_{\ve})}^2 + \ve \| X_{\ve}^{in} \|_{L^2(\Gamma_{\ve})}^2+ \sup_{0\le t\le T}\| g \|_{H^{1/2}(\partial \Omega)}^2 + \| \partial_t g \|_{L^2(0,T; H^{1/2}(\partial \Omega))}^2 \Big).$
\end{enumerate}
\end{lemma}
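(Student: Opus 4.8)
The plan is to derive all three bounds from two energy identities for the parabolic part on $\Gamma_\ve$ — obtained by testing the weak formulation \eqref{eq:weak-v_eps_w_eps} with $\phi=u_\ve$ and then with $\phi=\partial_t u_\ve$ — supplemented by a direct Grönwall estimate for the pointwise ODE on $\Gamma_\ve$. Before that I would record three $\ve$-uniform facts used throughout: (a) the Dirichlet energy estimate for \eqref{eq:p_eps}, giving $\|\nabla p_\ve\|_{L^2(\Omega)}\le C\|g\|_{H^{1/2}(\partial\Omega)}$ and, after differentiating in $t$, $\|\nabla\partial_t p_\ve\|_{L^2(\Omega)}\le C\|\partial_t g\|_{H^{1/2}(\partial\Omega)}$, with $C$ depending only on the ellipticity constants in \ref{H1}; (b) the surface measure bound $\ve\int_{\Gamma_\ve}1\,\dd S\le C$ and the scaled trace inequality $\ve\int_{\Gamma_\ve}|v|^2\,\dd S\le C(\|v\|_{L^2(\Omega)}^2+\ve^2\|\nabla v\|_{L^2(\Omega_\ve^c\cup\Omega_\ve^e)}^2)$; and (c) that, since $f(v,1)\le 0$ and $f(v,0)\ge 0$ for all $v$ by \ref{H2}, a comparison argument for $\partial_t X_\ve=f([u_\ve],X_\ve)$ yields $0\le X_\ve\le 1$ on $(0,T]\times\Gamma_\ve$ whenever $0\le X_\ve^{in}\le 1$, so that $0<S_L\le S_m(X_\ve)\le S_{ir}$ and $|f([u_\ve],X_\ve)|\le C$. (If one does not wish to assume $0\le X_\ve^{in}\le 1$, one uses instead the linear-growth bounds $|S_m(X_\ve)|\le C(1+|X_\ve|)$, $|f|\le C(1+|X_\ve|)$ and closes the resulting Grönwall inequalities jointly with the $X_\ve$-estimate below.)

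For \ref{lem: ape1} and the gradient part of \ref{lem: ape2} I would take $\phi=u_\ve$. Writing $\int_{\Gamma_\ve}\partial_t[u_\ve][u_\ve]\,\dd S=\tfrac12\tfrac{\dd}{\dd t}\int_{\Gamma_\ve}[u_\ve]^2\,\dd S$, dropping the nonnegative term $\tfrac1\ve\int_{\Gamma_\ve}S_m(X_\ve)[u_\ve]^2\,\dd S$, and rewriting the right-hand side as $-\tfrac1\ve\int_{\Omega_\ve^c\cup\Omega_\ve^e}\sigma_\ve\nabla p_\ve\cdot\nabla u_\ve\,\dd x$ (the identity already noted after \eqref{eq:weak-v_eps_w_eps}), Young's inequality absorbs this term up to $\tfrac12\int\sigma_\ve|\nabla u_\ve|^2$ and leaves $\tfrac{C}{\ve^2}\|\nabla p_\ve\|_{L^2(\Omega)}^2\le\tfrac{C}{\ve^2}\|g\|_{H^{1/2}(\partial\Omega)}^2$. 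Integrating in $t$ and multiplying by $\ve^2$ then controls both $\ve\sup_{t}\int_{\Gamma_\ve}[u_\ve]^2\,\dd S$ and $\ve^2\int_0^T\int_{\Omega_\ve}|\nabla u_\ve|^2$ as claimed. The $X_\ve$-part of \ref{lem: ape1} follows by multiplying $\partial_t X_\ve=f([u_\ve],X_\ve)$ by $X_\ve$, integrating over $\Gamma_\ve$, using (c), and applying Grönwall after multiplication by $\ve$. The $L^2$-part of \ref{lem: ape2} follows from the gradient bound via Poincaré on $\Omega_\ve^e$ (where $u_\ve=0$ on $\partial\Omega$) and Poincaré–Wirtinger on the individual cells of $\Omega_\ve^c$, together with the scaled trace inequality and the just-obtained control of $[u_\ve]$ on $\Gamma_\ve$.

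For \ref{lem: ape3}, the bound on $\partial_t X_\ve$ is immediate from $|\partial_t X_\ve|^2=|f([u_\ve],X_\ve)|^2\le C(1+|X_\ve|^2)$, integration over $(0,t)\times\Gamma_\ve$, multiplication by $\ve$, and \ref{lem: ape1}. For $\partial_t[u_\ve]$ I would take $\phi=\partial_t u_\ve$. Using $S_m(X_\ve)[u_\ve]\partial_t[u_\ve]=\tfrac12\partial_t(S_m(X_\ve)[u_\ve]^2)-\tfrac12(S_{ir}-S_L)f([u_\ve],X_\ve)[u_\ve]^2$ and $\nabla u_\ve\cdot\nabla\partial_t u_\ve=\tfrac12\partial_t|\nabla u_\ve|^2$, integrating in time, and integrating the forcing by parts first in space ($\di{\sigma_\ve\nabla p_\ve}=0$, $u_\ve|_{\partial\Omega}=0$) and then in time, I obtain an identity whose left side contains $\tfrac{c_m}{\ve}\int_0^t\int_{\Gamma_\ve}|\partial_t[u_\ve]|^2$ plus the nonnegative energies $\tfrac1{2\ve}\int_{\Gamma_\ve}S_m(X_\ve(t))[u_\ve(t)]^2$ and $\tfrac12\int_{\Omega_\ve}\sigma_\ve|\nabla u_\ve(t)|^2$, and whose right side is
\[
\frac1{2\ve}\int_{\Gamma_\ve}\! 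S_m(X_\ve^{in})[V_\ve^{in}]^2+\frac12\int_{\Omega_\ve}\!\sigma_\ve|\nabla u_\ve(0)|^2+\frac{S_{ir}-S_L}{2\ve}\int_0^t\!\!\int_{\Gamma_\ve}\! f[u_\ve]^2-\frac1\ve\int_{\Omega_\ve}\!\sigma_\ve\nabla p_\ve\!\cdot\!\nabla u_\ve\,\Big|_0^t+\frac1\ve\int_0^t\!\!\int_{\Omega_\ve}\!\sigma_\ve\nabla\partial_t p_\ve\!\cdot\!\nabla u_\ve .
\]
Multiplying by $\ve^2$: the $f$-term is bounded by $C$ using (c) and \ref{lem: ape1}; the term $\ve\int_{\Omega_\ve}\sigma_\ve\nabla p_\ve(t)\cdot\nabla u_\ve(t)$ is handled by Young, absorbing $\tfrac14\int\sigma_\ve|\nabla u_\ve(t)|^2$ into the left side and leaving $C\sup_t\|g\|_{H^{1/2}}^2$; the term $\ve\int_0^t\int_{\Omega_\ve}\sigma_\ve\nabla\partial_t p_\ve\cdot\nabla u_\ve\le C\|\partial_t g\|_{L^2(0,T;H^{1/2})}^2+C\int_0^T\ve^2\|\nabla u_\ve\|^2$, the last summand being controlled by \ref{lem: ape2}; and the remaining terms involve only the initial data, controlled by \ref{H3} and fact (a). Dropping the nonnegative left-side energies yields \ref{lem: ape3}.

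The main obstacle is the $\ve$-uniform control of the initial bulk energy $\ve^2\|\nabla u_\ve(0)\|_{L^2(\Omega_\ve^c\cup\Omega_\ve^e)}^2$ (and, analogously, of $\ve\int_{\Gamma_\ve}S_m(X_\ve^{in})[V_\ve^{in}]^2$) appearing on the right side of the differentiated identity. Since $u_\ve(0)$ solves the stationary transmission problem \eqref{eq:microscopic-prob_stationary} with jump $V_\ve^{in}$, one has $\|\nabla u_\ve(0)\|_{L^2(\Omega_\ve)}\le C\|V_\ve^{in}\|_{H^{1/2}(\Gamma_\ve)}$, and it is exactly to make $\ve\|V_\ve^{in}\|_{H^{1/2}(\Gamma_\ve)}\le C$ — hence $\ve^2\|\nabla u_\ve(0)\|^2\le C$ — that \ref{H3} is imposed, with the $\ve$-scaled $H^{1/2}(\Gamma_\ve)$-norm chosen so that this extension/trace constant is uniform in $\ve$; the same scaling is what makes the harmonic-extension estimate in fact (a)–(b) uniform. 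The remaining, more routine, difficulty is keeping track of the powers of $\ve$ (the factor $\ve^2$ applied to each energy identity), which is dictated by the $\ve^{-1}$ in the transmission condition of \eqref{eq:microscopic-prob} and the $\ve^{-1}g$ boundary data.
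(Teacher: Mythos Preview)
Your proof is correct and follows essentially the same strategy as the paper: test the weak formulation with $u_\ve$ for \ref{lem: ape1}--\ref{lem: ape2} and with $\partial_t u_\ve$ for \ref{lem: ape3}, transfer the surface forcing to the bulk via \eqref{eq:p_eps}, integrate the $p_\ve$-term by parts in time, and control the initial bulk energy $\|\nabla u_\ve(0)\|_{L^2}$ via the stationary transmission problem together with \ref{H3}. The only stylistic differences are that the paper bounds $S_m(X_\ve)[u_\ve]\,\partial_t[u_\ve]$ directly by Young's inequality (absorbing a small multiple of $(\partial_t[u_\ve])^2$ into the left side and using \ref{lem: ape1} for the remainder) rather than via your product-rule identity, and quotes a single Poincar\'e-type inequality for functions with jump (Lemma~3.3 in \cite{hummel2000homogenization}) for the $L^2$-part of \ref{lem: ape2} instead of splitting into $\Omega_\ve^e$ and the individual cells.
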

\begin{proof}
We {use $u_\ve$ as a test function in} the weak formulation \eqref{eq:weak-v_eps_w_eps}:
\begin{align*}
   &\frac{\ve^{-1}}{2} \partial_t\int_{\Gamma_{\ve}} [u_\ve]^2 \, \dd S + \frac{\ve^{-1}}{c_m} \int_{\Gamma_{\ve}} S_m(X_{\ve}) [u_\ve]^2 \, \dd S + \frac{1}{c_m} \int_{\Omega_\ve^c \cup \Omega_\ve^e} \sigma_{\ve} |\nabla u_\ve|^2\, \dd x\\
   & 
   = - \frac{\ve^{-1}}{c_m}\int_{\Omega_\ve^c \cup \Omega_\ve^e} \sigma_\ve \nabla p_\ve \cdot \nabla u_\ve\, dx.
\end{align*}
By the definition of the membrane conductivity $S_m$ \eqref{def:S_m}, we can bound the second term on the left-hand side of the last identity from below:
\begin{align*}
    \int_{\Gamma_{\ve}} S_m(X_\ve) [u_\ve]^2 \, \dd S \geq S_L \int_{\Gamma_{\ve}} [u_\ve]^2 \, \dd S.
\end{align*}
Using the Young's inequality with a small enough $\delta>0$ and a priori estimates for $p_\ve$ (see Corollary 8.7 in \cite{gilbarg2001elliptic}) 
\begin{align*}
    &\|p_\ve\|_{L^2(0,T; H^1(\Omega))} \le C\|g\|_{L^2(0,T; H^{1/2}(\partial \Omega))},
\end{align*}
for some $C$ independent of $\ve$, 
we get 
\begin{align*}
\begin{split}
   \ve^{-1} \partial_t\int_{\Gamma_{\ve}} [u_\ve]^2 \, \dd S +   \int_{\Omega_\ve^c \cup \Omega_\ve^e}  |\nabla u_\ve |^2 \, \dd x
   &\leq C \ve^{-2}\int_{\Omega_\ve^c \cup \Omega_\ve^e} | \nabla p_{\ve} |^2 \, \dd x \leq C \ve^{-2}\| g \|_{H^{1/2}(\partial \Omega)}^2.
   \end{split}
\end{align*}
Integrating in time yields
\begin{align*} 
    &\ve \int_{\Gamma_\ve} [u_\ve]^2\, \dd S
    + \ve^2\int_0^T \int_{\Omega_\ve^c \cup \Omega_\ve^e}  |\nabla u_\ve |^2 \, \dd x \dd t
    \leq C \big(\ve \| V_{\ve}^{in}
    \|_{L^2(\Gamma_{\ve})}^2 + \| g \|_{L^2(0,T; H^{1/2}(\partial \Omega))}^2\big).
\end{align*} 
which proves \ref{lem: ape1}.

To estimate the $L^2(\Omega)$-norm of $u_\ve$ we use the Poincaré inequality in $\Omega_\ve^c \cup \Omega_\ve^e$ for functions $u_\ve \in L^2(0,T; H^1(\Omega_\ve^c \cup \Omega_\ve^e))$ such that $u_\ve\big|_{\partial \Omega}=0$ (see Lemma 3.3, \cite{hummel2000homogenization}):
\begin{align*}
\int_\Omega |u_\ve|^2\, \dd x \le
C_0 \Big(\int_{\Omega_\ve^c \cup \Omega_\ve^e}|\nabla u_\ve|^2\, \dd x + \frac{1}{\ve} \int_{\Gamma_\ve} [u_\ve]^2 \, \dd S\Big),
\end{align*}
for some constant $C_0>0$ independent of $\ve$.

In order to prove \ref{lem: ape2} and \ref{lem: ape3} we make first a formal computation. Namely, we multiply \ref{eq:microscopic-prob} by $\partial_t u_\ve$ and integrate by parts:
\begin{align*}
   &\ve^{-1} \int_{\Gamma_{\ve}} |\partial_t [u_\ve]|^2 \, \dd S + \frac{\ve^{-1}}{c_m} \int_{\Gamma_{\ve}} S_m(X_\ve) [u_\ve] \partial_t [u_\ve] \, \dd S + \frac{1}{2 c_m} \partial_t \int_{\Omega_\ve^c \cup \Omega_\ve^e} \sigma_{\ve} | \nabla u_\ve |^2  \, \dd x\\ 
   &
   = - \frac{\ve^{-1}}{c_m} \int_{\Gamma_{\ve}} (\sigma^c\big(\frac{x}{\ve}\big) \nabla p_{\ve}^c \cdot \n ) \partial_t [u_\ve] \, \dd S.
\end{align*}
Multiplying \eqref{eq:p_eps} by $\partial_t u_\ve$ and integrating by parts, we translate the surface integral in the right-hand side into the bulk one:
\begin{align*}
    &\ve^{-1} \int_{\Gamma_{\ve}} |\partial_t [u_\ve]|^2 \, \dd S + \frac{\ve^{-1}}{c_m} \int_{\Gamma_{\ve}} S_m(X_\ve) [u_\ve] \partial_t [u_\ve] \, \dd S + \frac{1}{2 c_m} \partial_t \int_{\Omega_\ve^c \cup \Omega_\ve^e} \sigma_{\ve} | \nabla
    u_\ve |^2  \, \dd x\\ 
   &= - \frac{ \ve^{-1}}{c_m}
    \partial_t \int_{\Omega_\ve^c \cup \Omega_\ve^e} \sigma_{\ve} \nabla  p_{\ve} \cdot \nabla u_\ve \, \dd x +
    \frac{\ve^{-1}}{c_m}
    \int_{\Omega_\ve^c \cup \Omega_\ve^e} \sigma_{\ve} \nabla \partial_t p_{\ve} \cdot \nabla u_\ve \, \dd x.
\end{align*}
Integrating in time we have 
\begin{align*}
    &\ve^{-1} \int_0^t \int_{\Gamma_{\ve}} |\partial_\tau [u_\ve]|^2 \, \dd S \dd \tau 
    + \frac{\ve^{-1}}{c_m} \int_0^t \int_{\Gamma_{\ve}} S_m(X_\ve) [u_\ve] \partial_\tau [u_\ve] \, \dd S \dd \tau 
    + \frac{1}{2 c_m} \int_{\Omega_\ve^c \cup \Omega_\ve^e} \sigma_{\ve} | \nabla u_\ve |^2  \, \dd x \big|_{\tau=0}^{\tau=t}\\ 
   &
   = 
       - \frac{\ve^{-1}}{c_m}\int_{\Omega_\ve^c \cup \Omega_\ve^e} \sigma_{\ve} \nabla  p_{\ve} \cdot \nabla u_\ve \, \dd x \big|_{\tau=0}^{\tau=t}+
    \frac{\ve^{-1}}{c_m}
    \int_0^t \int_{\Omega_\ve^c \cup \Omega_\ve^e} \sigma_{\ve} \nabla \partial_\tau p_{\ve} \cdot \nabla u_\ve \, \dd x \dd \tau.
\end{align*}
Applying the Young inequality with a parameter to estimate the integrals in the right-hand side of the last identity, together with 
\begin{align*}
    \ve^{-1} \Big|\int_{\Gamma_{\ve}} S_m(X_\ve) [u_\ve] \partial_t [u_\ve] \, \dd S\Big| 
    \leq \frac{S_{\rm ir}}{2}\Big( \ve^{-1} \delta \int_{\Gamma_{\ve}} (\partial_t[u_\ve])^2 \, \dd S + 
    \ve^{-1} \delta^{-1}
    \int_{\Gamma_{\ve}} [u_\ve]^2 \, \dd S\Big),
\end{align*}
we have
\begin{align*}
\begin{split}
    &\ve^{-1} \int_0^t \int_{\Gamma_{\ve}} |\partial_t [u_\ve]|^2 \, \dd S \dd \tau 
    + \int_{\Omega_\ve^c \cup \Omega_\ve^e} | \nabla u_\ve |^2  \, \dd x
   \\
   &\le 
   C \ve^{-2}
    \sup_{t\in [0,T]}\int_{\Omega_\ve^c \cup \Omega_\ve^e} |\nabla  p_{\ve}|^2\, \dd x 
    +
    C \ve^{-2}
    \int_0^t \int_{\Omega_\ve^c \cup \Omega_\ve^e} |\nabla \partial_t p_{\ve}|^2\, \dd x \dd \tau\\
    &+ \int_{\Omega_\ve^c \cup \Omega_\ve^e} |\nabla u_\ve|^2\, \dd x \big|_{t=0}
    + \int_0^t \int_{\Omega_\ve^c \cup \Omega_\ve^e} |\nabla u_\ve|^2\, \dd x \dd \tau.
\end{split}
\end{align*}
The norm of the $\nabla u_\ve$ at $t=0$ is estimated in the following way. 

Let $u_{\ve}$ be the solution of the stationary problem \eqref{eq:microscopic-prob_stationary} with the jump $[u_{\ve}]$:
\begin{align*}
&\di{\sigma_\ve \nabla u_\ve}  =0, \, &\mbox{in}\,\, &\Omega_\ve^c \cup \Omega_\ve^e,  \\
&[\sigma_\ve \nabla u_\ve \cdot \n]  = 0, \, &\mbox{on}\,\, &\Gamma_\ve, \\
&u_{\ve}^c - u_{\ve}^e = [u_{\ve}]
, \, &\mbox{on}\,\,&\Gamma_\ve,
\\
&u_\ve   = 0, \, &\mbox{on}\,\,& \partial \Omega. 
\end{align*}
To estimate $\nabla u_\ve$ at time $t=0$, we write equivalently
\begin{align*}
(\mathcal L_\ve [u_\ve], [u_\ve])_{L^2(\Gamma_\ve)} = 
\min_{[\phi_\ve]=[u_\ve]} \int_{\Omega_\ve^c \cup \Omega_\ve^e} \sigma_\ve |\nabla \phi_\ve|^2\, \dd x.
\end{align*}
Choosing the test function $\phi_\ve$ such that $\phi_\ve=V_\ve^{in}$ in $\Omega_\ve^c$ and $\phi_\ve=0$ in $\Omega_\ve^e$, we obtain
\begin{align*}
    \int_{\Omega_\ve^c \cup \Omega_\ve^e} \sigma_\ve |\nabla u_\ve|^2\, \dd x\big|_{t=0}
    \le 
    \int_{\Omega_\ve^c} \sigma^c\big(\frac{x}{\ve}\big)|\nabla V_\ve^{in}|^2\, \dd x \le C.
\end{align*}
Multiplying by $\ve^2$, using the estimates obtained above and the Grönwall inequality, we arrive at
\begin{align*}
    &\ve \int_0^t \int_{\Gamma_{\ve}} |\partial_t [w_\ve]|^2 \, \dd S\, \dd \tau
    + \ve^2 \int_{\Omega_\ve^c \cup \Omega_\ve^e} | \nabla w_\ve |^2  \, \dd x \\
    &\leq 
    C\big(\ve \| V_{\ve}^{in}\|_{L^2(\Gamma_{\ve})}^2 + \sup_{t\in[0,T]}\|g\|_{H^{1/2}(\partial \Omega)}^2 + 
    \|\partial_t g\|_{L^2(0,T; H^{1/2}(\partial \Omega))}^2\big).
\end{align*}
The argument above can be made rigorous by performing the corresponding estimates on the Galerkin approximations, as it is done in Lemma \ref{lm:est-fin-dim}. 

The function $X_\ve$ is estimated using the properties (B1), (B2). 
Lemma \ref{lm:apriori-est} is proved.
\end{proof}

\subsection{Compactness}
\label{sec:compactness}
To pass to the limit as $\ve \to 0$ we will use the two-scale convergence  in each subdomain $\Omega_\ve^e, \Omega_\ve^c$ and on the surface $\Gamma_\ve$. Below we give the corresponding definitions. Note that $u_\ve, X_\ve$ do not oscillate in time, as confirmed by uniform estimates for the time derivatives. 
\begin{definition}
\label{def:2-scale}
    We say that $u_{\ve}(t,x)$ two-scale converges in $L^2(0,T;L^2(\Omega)) $ to the function $u_0^{\alpha}(t,x,y) \in L^2(0,T;L^2(\Omega \times Y)), \, \alpha = c,e$, as $\ve \to 0$ and denote it by $u_{\ve}^{\alpha} (t,x) \stackrel{2}{\rightharpoonup } u_0^{\alpha}(t,x,y)$
    if the following hold
    \begin{enumerate}[label=(\roman*)]
        \item $\displaystyle\int_0^T \int_{\Omega} | u_{\ve}^{\alpha} |^2 \, \dd x \, \dd t < C $.
        \item For any $\phi(t,x) \in C([0,T]; L^2(\Omega))$ and $\psi(y) \in L^2(Y)$, we have 
        \begin{align*}
            \lim_{\ve \to 0} \int_0^T \int_{\Omega} u_{\ve}^{\alpha} (t,x) \phi(t,x) \psi\Big(\frac{x}{\ve}\Big)  \, \dd x \, \dd t = \int_0^T \int_{\Omega} \int_{Y} u_0^{\alpha} (t,x,y) \phi(t,x) \psi(y) \, \dd y \, \dd x \, \dd t.
        \end{align*}
    \end{enumerate}
\end{definition}

\begin{definition}
\label{def:2scale-Gamma_eps}
    We say that a sequence $\{ v_{\ve}(t,x) \}$ two-scale converges in $L^2(0,T;L^2(\Gamma_{\ve})) $, to the function $v_0(t,x,y) \in L^2(0,T;L^2(\Omega \times \Gamma))$, as $\ve \to 0$,
    if the following holds
    \begin{enumerate}[label=(\roman*)]
        \item $\displaystyle \ve \int_0^T \int_{\Gamma_{\ve}}  v_{\ve}^2 \, \dd S \, \dd t < C $.
        \item For any $\phi(t,x) \in C((0,T); C(\overline{\Omega}))$ and $\psi(y) \in C(\Gamma)$, we have 
        \begin{align*}
            \lim_{\ve \to 0} \ve \int_0^T \int_{\Gamma_{\ve}} v_{\ve} (t,x) \phi(t,x) \psi\Big(\frac{x}{\ve}\Big)  \, \dd S_x \, \dd t = \int_0^T \int_{\Omega} \int_{\Gamma} v_0 (t,x,y) \phi(t,x) \psi(y) \, \dd S_y \, \dd x \, \dd t.
        \end{align*}
    \end{enumerate}
\end{definition}
\begin{definition}
\label{def:2scale-Gamma_eps-pointwise}
A sequence $\{v_\ve(t,x) \}$ $t$-pointwise two-scale converges to some $v_0 \in L^2(0,T;L^2(\Omega \times \Gamma))$ in $L^2(\Gamma_{\ve})$ if 
\begin{enumerate}[label=(\roman*)]
        \item $\displaystyle \ve \int_{\Gamma_{\ve}} v_{\ve}^2 \, \dd S \, \dd t < C $, and 
        \item for almost all $t \in [0,T]$ and all $\phi(x) \in C(\overline{\Omega}), \, \psi(y) \in C(\Gamma)$,
        \begin{align*}
             \lim_{\ve \to 0} \ve \int_{\Gamma_{\ve}} v_{\ve} (t,x) \phi(x) \psi\Big(\frac{x}{\ve}\Big)  \, \dd S_x = \int_{\Omega} \int_{\Gamma} v_0 (t,x,y) \phi(x) \psi(y) \, \dd S_y \, \dd x.
        \end{align*}
        \end{enumerate}
\end{definition}
Having derived a priori estimates in Lemma \ref{lm:apriori-est}, we prove the following compactness result.
\begin{lemma}[Compactness]\label{lm:compactness}
Denote $\chi^\alpha(y)$ the indicator function of $Y^\alpha$, $\alpha= e, c$. Then there exist $u_{-1}^\alpha \in L^2(0,T, H^1(\Omega))$, a $Y$-periodic in $y$ function $u_0^e(t,x,y)\in L^2((0,T)\times \Omega;  H_{\mathrm{per}}^1(Y^e))$, $u_0^c(t,x,y)\in L^2((0,T)\times \Omega; H^1(Y^c))$, as well as $\overline{S}, \overline{f}\in L^2(0,T; L^2(\Omega \times \Gamma))$
such that, along a subsequence, the following convergence holds as $\ve \to 0$:
\begin{itemize}
\item[(a)] Extra- and intracellular potential:\\ 
$\ve \chi^\alpha\big(\frac{x}{\ve}\big)u_\ve(t,x)  \, \stackrel{2}{\rightharpoonup }\, \chi^\alpha(y) u_{-1}^\alpha(t,x)$ weakly two-scale in $L^2(0,T; L^2(\Omega))$.\\
$\ve \chi^\alpha\big(\frac{x}{\ve}\big) \nabla u_\ve(t,x) \, \stackrel{2}{\rightharpoonup }\, \chi^\alpha(y)\Big(\nabla_x u_{-1}^\alpha(t,x) + \nabla_y u_0^\alpha(t,x,y)\Big)$ weakly two-scale in $L^2(0,T; L^2(\Omega))$, where $u_0^e(t,x,\cdot)$ is $Y$-periodic.


\item[(b)] Convergence of the jump $[u_\ve]$ on $\Gamma_\ve$:\\
$[u_\ve] \, \stackrel{2}{\rightharpoonup }\, u_0^c - u_0^e\in C([0,T]; L^2(\Omega\times \Gamma))$ $t$-pointwise two-scale in $L^2(\Gamma_\ve)$,\\
$[\partial_t u_\ve] \, \stackrel{2}{\rightharpoonup }\, \partial_t (u_0^c - u_0^e)\in L^2(0,T; L^2(\Omega \times \Gamma))$ weakly two-scale in $L^2(0,T; L^2(\Gamma_\ve))$.

\item[(c)] $u_{-1}^c = u_{-1}^e=u_{-1}(t,x)$ in $L^2((0,T)\times \Omega)$.

\item[(d)]
Convergence of $X_\ve$ on $\Gamma_\ve$:\\
$X_\ve \, \stackrel{2}{\rightharpoonup }\, X_0 \in C([0,T]; L^2(\Omega\times \Gamma))$ $t$-pointwise
two-scale in $L^2(\Gamma_\ve)$,\\
$\partial_t X_\ve \, \stackrel{2}{\rightharpoonup }\, \partial_t X_0\in L^2(0,T; L^2(\Omega \times \Gamma))$ weakly  two-scale in $L^2(0,T; L^2(\Gamma_\ve))$.

\item[(e)] Convergence of nonlinear functions:\\
$S_m(X_\ve)[u_\ve] \, \stackrel{2}{\rightharpoonup }\, \overline{S}(t,x,y)\in L^2(0,T; L^2(\Omega \times \Gamma)))$ weakly 
two-scale in $L^2(0,T; L^2(\Gamma_\ve))$.\\
$f([u_\ve], X_\ve) \, \stackrel{2}{\rightharpoonup }\, \overline{f}(t,x,y)\in L^2(0,T; L^2(\Omega \times \Gamma)))$ weakly 
two-scale in $L^2(0,T; L^2(\Gamma_\ve))$.
\end{itemize}
\end{lemma}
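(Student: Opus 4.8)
\textbf{Proof strategy for Lemma \ref{lm:compactness}.}

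The plan is to extract, by successive refinement, a single subsequence along which (a)--(e) all hold: first the bulk two-scale limits, then the surface limits of the jump and of its time derivative, then the limits of $X_\ve$, and finally the (not yet identified) limits of the nonlinear terms; at the end I check that the bulk and surface limits are mutually consistent. The only inputs are the uniform bounds of Lemma \ref{lm:apriori-est}, the standard two-scale compactness theorems for two-component periodic domains, and the surface two-scale compactness encoded in Definitions \ref{def:2scale-Gamma_eps}--\ref{def:2scale-Gamma_eps-pointwise}. Since the uniform bounds on time derivatives show that $u_\ve, X_\ve$ do not oscillate in time, all limit objects will be functions of $(t,x,y)$ with the expected time regularity.

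Step 1 (bulk limits and item (c)). By \ref{lem: ape2}, $\ve u_\ve$ and $\ve\nabla u_\ve$ are bounded in $L^2$ on each phase, so two-scale compactness gives $\ve\chi^\alpha(x/\ve)u_\ve\stackrel{2}{\rightharpoonup}\chi^\alpha(y)u_{-1}^\alpha(t,x)$ and $\ve\chi^\alpha(x/\ve)\nabla u_\ve\stackrel{2}{\rightharpoonup}\chi^\alpha(y)(\nabla_x u_{-1}^\alpha+\nabla_y u_0^\alpha)$; the limit $u_{-1}^\alpha$ is independent of $y$ because a cellwise Poincar\'e--Wirtinger inequality shows $\ve u_\ve$ is within $O(\ve)$ of a phasewise constant in $L^2$, and $u_0^e(t,x,\cdot)$ is $Y$-periodic because $Y^e$ is connected (using the standard $H^1$-extension operator for the connected phase). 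This is item (a). Next, $\int_{\Gamma_\ve}[\ve u_\ve]^2=\ve^2\int_{\Gamma_\ve}[u_\ve]^2=\ve\cdot(\ve\int_{\Gamma_\ve}[u_\ve]^2)\le C\ve\to0$ by \ref{lem: ape1}, so the jump of $\ve u_\ve$ vanishes on $\Gamma_\ve$; since the surface two-scale limit of $\ve u_\ve^\alpha|_{\Gamma_\ve}$ is the trace of $u_{-1}^\alpha$, this forces $u_{-1}^c=u_{-1}^e=:u_{-1}(t,x)$, which is item (c).

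Step 2 (item (b): the surface limit of the jump --- the crux). From \ref{lem: ape1}, $\ve\int_{\Gamma_\ve}[u_\ve]^2\le C$ uniformly in $t$, so for a.e.\ $t$ the family $\{[u_\ve](t)\}$ is $t$-pointwise two-scale precompact; from \ref{lem: ape3}, $\ve\int_0^T\int_{\Gamma_\ve}|\partial_t[u_\ve]|^2\le C$, which makes $t\mapsto[u_\ve](t)$ uniformly H\"older-$\tfrac12$ in the $\ve$-weighted $L^2(\Gamma_\ve)$-norm. A diagonal/Arzel\`a--Ascoli argument over a countable dense set of times then yields a single subsequence with $[u_\ve]\stackrel{2}{\rightharpoonup}v_0$ $t$-pointwise for every $t$, $v_0\in C([0,T];L^2(\Omega\times\Gamma))$, and $\partial_t[u_\ve]\stackrel{2}{\rightharpoonup}\partial_t v_0$ weakly two-scale in $L^2(0,T;L^2(\Gamma_\ve))$. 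It remains to identify $v_0=u_0^c-u_0^e$. I would do this by taking test functions $\phi(t,x)\psi(x/\ve)$ with $\psi$ $Y$-periodic, applying the divergence theorem separately in $\Omega_\ve^c$ and $\Omega_\ve^e$ to rewrite $\ve\int_{\Gamma_\ve}[u_\ve]\,\phi\,\psi(x/\ve)\,\n\cdot e_k\,\dd S$ as a bulk integral containing $\ve\nabla u_\ve\cdot e_k\,\phi\psi$, $\ve u_\ve\,\partial_{x_k}\phi\,\psi$ and $u_\ve\phi\,(\partial_{y_k}\psi)(x/\ve)$: the first two terms pass to the limit by (a); the last, delicate one (where $u_\ve=O(\ve^{-1})$) is handled by first subtracting the leading part $\ve^{-1}u_{-1}$, which is continuous across $\Gamma_\ve$ and hence leaves $[u_\ve]$ unchanged while contributing only $O(\ve)$ against a mean-zero derivative $\partial_{y_k}\psi$, and then recognizing the remainder as the $\nabla_y u_0^\alpha$ term. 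Equivalently, one may pass through the periodic unfolding operator, which commutes with traces and directly gives $\mathcal{T}_\ve([u_\ve])\to u_0^c-u_0^e$ on $\Omega\times\Gamma$. I expect this identification to be the main obstacle.

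Step 3 (items (d) and (e), and assembling the subsequence). The ODE for $X_\ve$ admits a uniform $L^\infty$ bound by comparison (using $0\le\beta\le1$ and the natural bounds on the initial data), and \ref{lem: ape3} bounds $\ve\int_0^T\int_{\Gamma_\ve}|\partial_t X_\ve|^2$; thus $X_\ve$ and $\partial_t X_\ve$ fall into exactly the framework of Step 2, giving $X_\ve\stackrel{2}{\rightharpoonup}X_0\in C([0,T];L^2(\Omega\times\Gamma))$ $t$-pointwise and $\partial_t X_\ve\stackrel{2}{\rightharpoonup}\partial_t X_0$ weakly two-scale, which is (d). For (e), the $L^\infty$ bound on $X_\ve$ together with the $\ve$-weighted $L^2(\Gamma_\ve)$-bound on $[u_\ve]$ makes $S_m(X_\ve)[u_\ve]$ bounded in $L^2(0,T;L^2(\Gamma_\ve))$ (and $f([u_\ve],X_\ve)$ bounded in $L^\infty$ by \ref{H2}), so surface two-scale compactness produces weak two-scale limits $\overline S,\overline f\in L^2(0,T;L^2(\Omega\times\Gamma))$; no identification of these limits is claimed at this stage, that being deferred to the truncated problem and the monotonicity argument. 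Finally, all the extractions above are performed one after another, so a single subsequence realizes (a)--(e) simultaneously, and the residual additive constants in $u_0^\alpha$ are removed by normalizing $\int_{Y^\alpha}u_0^\alpha\,\dd y=0$ for each $(t,x)$.
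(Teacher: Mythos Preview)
Your overall strategy matches the paper's: extract two-scale limits in the bulk phases from the $L^2$ bounds on $\ve u_\ve$ and $\ve\nabla u_\ve$, extract the surface two-scale limit $v_0$ of $[u_\ve]$ from \ref{lem: ape1}, upgrade to $t$-pointwise using the bound on $\partial_t[u_\ve]$, handle $X_\ve$ via its $L^\infty$ bound, and leave $\overline S,\overline f$ unidentified. Parts (a), (c), (d), (e) are essentially as in the paper (the paper obtains $t$-pointwise convergence by integrating $\partial_t[u_\ve]$ rather than via Arzel\`a--Ascoli, but that is cosmetic).

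The real discrepancy is in Step 2, the identification $v_0=u_0^c-u_0^e$. Your primary route---testing with $\phi(t,x)\psi(x/\ve)$, integrating by parts, and dealing with the dangerous term $\int_{\Omega_\ve^c\cup\Omega_\ve^e} u_\ve\,\phi\,(\partial_{y_k}\psi)(x/\ve)\,\dd x$ by subtracting $\ve^{-1}u_{-1}$---has a gap. The subtracted piece is \emph{not} $O(\ve)$: since $\partial_{y_k}\psi(x/\ve)=\ve\,\partial_{x_k}[\psi(x/\ve)]$, one gets $\int_\Omega \ve^{-1}u_{-1}\phi\,\partial_{y_k}\psi(x/\ve)\,\dd x=-\int_\Omega\partial_{x_k}(u_{-1}\phi)\,\psi(x/\ve)\,\dd x$, which is $O(1)$. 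More seriously, the remainder $u_\ve-\ve^{-1}u_{-1}$ is not known to be bounded in $L^2(\Omega_\ve^\alpha)$: the a priori estimates only give $\|\ve u_\ve\|_{L^2}\le C$, and weak two-scale convergence $\ve u_\ve\stackrel{2}{\rightharpoonup}u_{-1}$ does not imply $\|\ve u_\ve-u_{-1}\|_{L^2}\to0$. So you cannot pass to the limit in $\int(u_\ve-\ve^{-1}u_{-1})\phi\,\partial_{y_k}\psi(x/\ve)\,\dd x$ and ``recognize the remainder as the $\nabla_y u_0^\alpha$ term''.

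The paper sidesteps this entirely. It tests with a \emph{vector-valued} $\Phi(t,x,y)\in C_0^1([0,T]\times\overline\Omega;C^1_{\mathrm{per}}(\overline Y))$, integrates by parts to get the identity \eqref{eq:test_2}, and then restricts to solenoidal test fields with $\diy{\Phi}=0$ in $Y^c\cup Y^e$ and $[\Phi]\cdot\n=0$ on $\Gamma$. The solenoidal condition kills the dangerous term $\int u_\ve\,\diy{\Phi}(x/\ve)\,\dd x$ outright, so no control on $u_\ve$ beyond $\ve u_\ve\in L^2$ is needed. The resulting limit identity, valid for all such solenoidal $\Phi$, is then read off via a Helmholtz-type decomposition (Lemma~3.9 in \cite{hummel2000homogenization}), which simultaneously produces $u_0^\alpha$ with $q^\alpha=\nabla_x u_{-1}^\alpha+\nabla_y u_0^\alpha$ and $v_0|_\Gamma=(u_0^c-u_0^e)|_\Gamma$. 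Your unfolding alternative is a legitimate second route and would close the gap, but it is not what the paper does; if you keep your integration-by-parts argument, the missing idea is precisely this solenoidal-test-function trick.
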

\begin{proof}
(a) Due to (ii) in Lemma~\ref{lm:apriori-est}, $\ve u_\ve^\alpha$ and $\ve \nabla u_\ve^\alpha$ are bounded uniformly in $\ve$ in $L^2(\Omega_\ve^\alpha)$. Thus, there exist $u_{-1}^e(t,x,y) \in L^2((0,T)\times \Omega; L_{\mathrm{per}}^2(Y))$, $u_{-1}^c(t,x,y) \in L^2((0,T)\times \Omega; L^2(Y))$,  $q^e(t,x,y)\in L^2((0,T)\times \Omega; H_{\mathrm{per}}^1(Y))$, and $q^c(t,x,y)\in L^2((0,T)\times \Omega; H^1(Y))$ such that $\ve \chi^\alpha\big(\frac{x}{\ve}\big) u_\ve$ converges two-scale to $\chi^\alpha(y) u_{-1}^\alpha(t,x,y)$ and $\ve \chi^\alpha\big(\frac{x}{\ve}\big) \nabla u_\ve$ to $\chi^\alpha(y) q^\alpha(t,x,y)$ in the sense of Definition \ref{def:2-scale}. Take a test function $\phi(t,x) \psi\big(\frac{x}{\ve}\big)$ such that $\phi \in C([0,T]; C_0^\infty(\Omega))$, $\psi \in H_{\mathrm{per}}^1(Y)$, and $\psi  = 0$ on $\Gamma$. We integrate by parts and get
    \begin{align*}
        &\ve^2 \int_0^T \int_{\Omega_\ve^\alpha} \nabla u_\ve^\alpha(t,x) \phi(t,x) \cdot \psi\big(\frac{x}{\ve}\big) \, \dd x \, \dd t \\
        & = - \ve^2 \int_0^T \int_{\Omega_\ve^\alpha} u_\ve^\alpha(t,x) \nabla \phi(t,x)\cdot \psi\big(\frac{x}{\ve}\big) \, \dd x \, \dd t - \ve \int_0^T \int_{\Omega_\ve^\alpha} u_\ve^\alpha(t,x) \phi(t,x) \diy{ \psi\big(\frac{x}{\ve}\big)} \, \dd x \, \dd t, \,\, \alpha=c, e.
    \end{align*}
    We pass to the limit as $\ve \rightarrow 0$ and obtain
    \begin{align*}
        &\frac{1}{| Y |} \int_0^T \int_{\Omega} \int_{Y^\alpha} u_{-1}^\alpha(t,x,y) \phi(t,x) \diy{\psi(y)} \, \dd y \, \dd x \, \dd t = 0.
    \end{align*}
This implies, due to the connectedness of $Y^\alpha$, that $\nabla_y u_{-1}^\alpha = 0$, $u_{-1}^\alpha$ is independent of $y$ in $Y^\alpha$, and thus $u_{-1}^\alpha(t,x,y)=u_{-1}^\alpha(t,x)\chi^\alpha(y)$. 


Next, we will show that $u_{-1}^\alpha \in L^2(0,T; H^1(\Omega))$. We prove this for $u_{-1}^e$. As before, $\chi^e(y)q^e(t,x,y)$ is the weak two-scale limit of $\ve \chi^e\big(\frac{x}{\ve}\big) \nabla u_\ve$.
For an arbitrary $\phi(t,x)\in L^2((0,T)\times \Omega)$, take a periodic in $y$ test function $\displaystyle \Phi(t,x,y)$ with $\diy{\Phi(t,x,y)}=0$ in $Y^e$ and such that $\Phi(t,x,\cdot)=0$ on $\Gamma$, $\int_{Y^e} \Phi\, \dd y=\phi(t,x)$. Then $\Phi\in L^2((0,T)\times \Omega; H_{\mathrm{per}}^1(Y))^3$ and by Lemma 2.10 in \cite{allaire1992homogenization}
\begin{align*}
    \|\Phi\|_{L^2((0,T)\times \Omega; H_{\mathrm{per}}^1(Y))^3} \le C \|\phi\|_{L^2((0,T)\times \Omega)^3}.
\end{align*}
Integrating by parts yields
    \begin{align*}
        &\ve \int_0^T \int_{\Omega_\ve^e} \nabla u_\ve^e(t,x) \cdot \Phi(t,x,\frac{x}{\ve}) \, \dd x \dd t = - \ve \int_0^T \int_{\Omega_\ve^e} u_\ve^e(t,x)  \dix{\Phi}(t,x,\frac{x}{\ve}) \, \dd x \dd t \\
        &- \int_0^T \int_{\Omega_\ve^e} u_\ve^e(t,x) \diy{\Phi}(t,x,\frac{x}{\ve}) \, \dd x \dd t.
    \end{align*}
    Passing to the limit, as $\ve\to 0$, yields
    \begin{align*}
        & \int_0^T \int_{\Omega} \int_{Y^e} q^e(t,x,y)\cdot \Phi(t,x,y)\, \dd y \, \dd x \, \dd t = - \int_0^T \int_{\Omega} \int_{Y^e} u_{-1}^e(t,x) \dix{\Phi}(t,x,y) \, \dd y \, \dd x \, \dd t\\
        & = - \int_0^T \int_{\Omega} u_{-1}^e(t,x) \dix{\phi}(t,x)\, \dd x \, \dd t.
    \end{align*}
Thus, 
\begin{align*}
\Big|\int_0^T \int_{\Omega} u_{-1}^e(t,x) \dix{\Phi}(t,x,y)\, \dd x \, \dd t\Big|
\le C\, 
\|\Phi\|_{L^2((0,T)\times \Omega; L_{\mathrm{per}}^2(Y))^3} \le C \|\phi\|_{L^2((0,T)\times \Omega)^3}.
\end{align*}
The last bound implies that $u_{-1}^e(t,x)\in L^2(0,T; H^1(\Omega))$. The proof for $u_{-1}^c$ is carried out in a similar fashion. 

\vspace{3mm}
\noindent
(b) Due to the uniform estimate (i) for $[u_\ve]$ on $\Gamma_\ve$, there exists $v_0\in L^2(0,T; L^2(\Omega \times \Gamma))$ such that $[u_\ve]$ converges weakly two-scale to $v_0$ in $L^2(0,T; L^2(\Gamma_\ve))$ (see \cite{damlamian1995two}). Let us show that $q^\alpha = \nabla u_{-1}^\alpha(t,x) + \nabla_y u_0^\alpha(t,x,y)$ ($\alpha= e, c$) and that $[u_\ve]$ converges weakly two-scale on $\Gamma_\ve$ to $[u_0]=(u_0^c - u_0^e)\big|_{y\in \Gamma}$. 

Take a smooth test function $\Phi(t,x,y)\in C_0^1([0,T]\times \overline{\Omega}; C_{\mathrm{per}}^1(\overline{Y}))$.
Integrating by parts yields
\begin{align}
\label{eq:test_2}
\begin{split}
&\ve \int_0^T \int_{\Omega_\ve^c \cup \Omega_\ve^e} \nabla u_\ve(t,x) \cdot \Phi(t,x, \frac{x}{\ve})\, \dd x \, \dd t  = - \ve \int_0^T \int_{\Omega_\ve^c \cup \Omega_\ve^e} u_\ve(t,x)  \dix{\Phi}(t,x,\frac{x}{\ve}) \, \dd x \, \dd t\\
-&\int_0^T \int_{\Omega_\ve^c \cup \Omega_\ve^e} u_\ve(t,x)\diy{\Phi}(t,x,\frac{x}{\ve}) \, \dd x \, \dd t
+ \ve \int_0^T \int_{\Gamma_\ve} [u_\ve]\, \Phi(t,x,\frac{x}{\ve})\cdot \n \, \dd S \, \dd t.
\end{split}
\end{align}
Choosing a solenoidal test function such that $\diy{\Phi}(t,x,y)=0$ in $Y^c \cup Y^e$, $[\Phi]\cdot \n=0$, and  passing to the limit in \eqref{eq:test_2} we obtain 
\begin{align*}
&\int_0^T \int_{\Omega} \int_{Y^c} ( q^c(t,x,y)- \nabla u_{-1}^c(t,x))\cdot  \Phi(t,x,y) \, \dd y \, \dd x \, \dd t\\
&+
\int_0^T \int_{\Omega} \int_{Y^e} ( q^e(t,x,y)- \nabla u_{-1}^e(t,x))\cdot  \Phi(t,x,y) \, \dd y \, \dd x \, \dd t\\
&= \int_0^T \int_\Omega\int_{\Gamma} v_0(t,x,y)\, \Phi(t,x,y)\cdot \n \, \dd S \,\dd x\, \dd t.
\end{align*}
Since it holds for all solenoidal $\Phi$, by the Helmholtz decomposition (see Lemma 3.9 in \cite{hummel2000homogenization}), there exists $u_0^e(t,x,y) \in L^2((0,T)\times \Omega; H_{\mathrm{per}}^1(Y^e))$ and $u_0^c(t,x,y) \in L^2((0,T)\times \Omega; H^1(Y^c))$ such that
\begin{align*}
&q^\alpha(t,x,y) -\nabla u_{-1}^\alpha(t,x) = \nabla_y u_0^\alpha(t,x,y),\quad
v_0\big|_{y\in \Gamma}=(u_0^c - u_0^e)\big|_{y\in \Gamma}.
\end{align*}
Consequently, $[u_\ve] \, \stackrel{2}{\rightharpoonup}\, (u_0^c - u_0^e)$ in $L^2(0,T; L^2(\Gamma_\ve))$. 

The a priori estimate for $\partial_t [u_\ve]$ yields the two-scale convergence to $\partial_t(u_0^c - u_0^e)$ in $L^2(0,T; L^2(\Gamma_\ve))$, which in its turn ensures the $t$-pointwise two-scale convergence of the membrane potential $[u_\ve]$. 
Indeed, for any $t\in[0,T]$ and any $\varphi(t,x)\in C^1([0,T]\times \overline{\Omega})$, $\psi(y)\in C(\Gamma)$, such that $\varphi(0,x)=0$
\begin{align*}
&\ve \int_{\Gamma_\ve} [u_\ve](t,x) \varphi(t,x) \psi\big(\frac{x}{\ve}\big)\, \dd S\\
&= \ve \int_0^t \int_{\Gamma_\ve}( [u_\ve](\tau,x) \partial_\tau \varphi(\tau,x)
+ \partial_\tau [u_\ve](\tau,x)  \varphi(\tau,x))\psi\big(\frac{x}{\ve}\big)\dd S\, \dd \tau\\
&\to
\int_0^t \int_\Omega\int_{\Gamma}( [u_0](\tau,x,y) \partial_\tau \varphi(\tau,x)
+ \partial_\tau [u_0](\tau,x,y)  \varphi(\tau,x))\psi(y)\dd S \, \dd x\, \dd \tau\\
&=\int_\Omega\int_{\Gamma} [u_0](t,x,y) \varphi(t,x)\psi(y) \, \dd S\, \dd x, \quad \ve \to 0.
\end{align*}


\vspace{2mm}
\noindent
(c) Let us prove that $u_{-1}^c = u_{-1}^e$. Notice first that, in view of the trace inequality and the a priori estimates (Lemma \ref{lm:apriori-est}), we have
\begin{align*}
\ve \int_{\Gamma_\ve} |\ve u_\ve^\alpha|^2\, \dd S
\le
C\Big(\int_{\Omega_\ve^\alpha} |\ve u_\ve^\alpha|^2\, \dd x + \ve^2 \int_{\Omega_\ve^\alpha} |\nabla (\ve u_\ve^\alpha)|^2\, \dd x\Big) \le C\ve^2, \quad \alpha=e, c.
\end{align*}
Therefore 
$\ve[u_\ve] \, \stackrel{2}{\rightharpoonup}\,  [u_{-1}]$ on $\Gamma_\ve$. On the other hand, due to the a priori estimate for $[u_\ve]$, $[u_\ve] \, \stackrel{2}{\rightharpoonup}\, v_0(t,x,y)$ for some $v_0\in L^2(0,T; L^2(\Omega\times \Gamma))$. This implies that $[u_{-1}]=0$, and thus $u_{-1}^c = u_{-1}^e$.

\vspace{3mm}
\noindent
(d) By Lemma \ref{lm:bound-w}, $0\le X_\ve\le 1$ and thus, the convergence (d) is clear.\\

\noindent
(e) follows immediately from the boundedness of $X_\ve$ and Lipschitz continuity of $f(\cdot, \cdot)$.
Lemma \ref{lm:compactness} is proved.
\end{proof}
While passing to the limit in \eqref{eq:microscopic-prob}, we will need the following classical homogenization results for the auxiliary function $p_\ve$ solving \eqref{eq:p_eps} (See Theorem 2.6, \cite{allaire1992homogenization}, and Theorem 5.1 in \cite{holmbom1997homogenization}).  
\begin{align}
\label{eq:convergence_p_eps}
\begin{split}
    &p_\ve \to p_0 \quad \mbox{strongly in}\,\, L^2(0,T; L^2(\Omega)),\\
    &\Big(\nabla p_\ve - \nabla p_0 - \nabla_y \mathcal{M}_j\big(\frac{x}{\ve}\big) \partial_{x_j} p_0\Big) \to 0 \quad \mbox{strongly in}\,\, L^2(0,T; L^2(\Omega)),
\end{split}
\end{align}
where the limit function $p_0$ solves the homogenized problem
\begin{align*}
\begin{split}
&\di{A \nabla p_0} = 0, \quad x\in \Omega,  \\
&p_0 = g, \quad x\in \partial \Omega,
\end{split}
\end{align*}
with $A_{ij}^\eff= \int_Y \sigma(\partial_i \mathcal{M}_j +\delta_{ij})\, \dd y$, and $\mathcal{M}_j(y)$, $j=1, 2, 3$, solving cell problems \eqref{eq:cell-M}. Note that the assumptions of Theorem 2.6 in \cite{allaire1992homogenization} are satisfied since $p_0\in H^2(\Omega)$ for $g$ satisfying \ref{H4} and $\partial \Omega$ having $C^2$ regularity. 

\subsection{Proof of the homogenization result}
\label{sec:homogenization}
In this section we prove Theorem \ref{th:main}.
The compactness given by Lemma \ref{lm:compactness} is enough to pass to the limit in the weak formulation of \eqref{eq:microscopic-prob}, but not sufficient to identify the limit functions $\overline{S}$ and $\overline{f}$. 

Indeed, to pass to the limit, we start by writing the weak formulation of \eqref{eq:microscopic-prob}:
\begin{align}
\begin{split}
\label{eq:weak-v_eps_w_eps-1}
    &\int_0^T \int_{\Gamma_{\ve}} (-c_m [u_{\ve}] \partial_t [\phi_1] + S_m(X_{\ve}) [u_{\ve}][\phi_1]) \, \dd S\,\dd t + \ve\int_0^T \int_{\Omega_\ve^c \cup \Omega_\ve^e} \sigma_{\ve} \nabla u_{\ve} \cdot \nabla \phi_1 \, \dd x\, \dd t\\ 
    &=\int_{\Gamma_{\ve}} c_m V_{\ve}^{in}\phi_1(0,x)\, \dd S -\int_0^T \int_{\Omega_\ve^c \cup \Omega_\ve^e} \sigma_\ve \nabla p_\ve \cdot \nabla \phi_1\, \dd x\, \dd t,\\
    &-\int_0^T \int_{\Gamma_{\ve}} X_\ve\, \partial_t \phi_2\, \dd S\, \dd t
    = \int_0^T \int_{\Gamma_{\ve}} f([u_\ve],X_\ve)\, \phi_2\, \dd S\, \dd t + \int_{\Gamma_{\ve}} X_\ve^{in}\, \phi_2(0,x)\, \dd S,
\end{split}
\end{align}
for any test function
$\phi_1\in  C^1([0,T]; H^1(\Omega_\ve^c)\oplus H^1( \Omega_\ve^e))$ such that $\phi_1 |_{\partial \Omega} = 0$, $\phi_1\big|_{t=T}=0$, and $\phi_2 \in C^1([0,T]; L^2(\Gamma_\ve))$ such that $\phi_2\big|_{t=T}=0$.

We test the first equation in \eqref{eq:weak-v_eps_w_eps-1} with the function $\phi_1=\varphi(t,x) + \ve \psi(t,x,\frac{x}{\ve})$, where $\varphi \in C_0^1([0,T]\times\Omega)$, $\psi \in C^1(0,T; C^1(\overline{\Omega} \times C_{\mathrm{per}}^1(Y^c \cup Y^e)))$, and the second equation in \eqref{eq:weak-v_eps_w_eps-1} with $\phi_2 \in C_0^1([0,T]\times \Omega; C_{\mathrm{per}}^1(Y^c \cup Y^e))$, and pass to the limit as $\ve \to 0$ using Lemma \ref{lm:compactness}. Notice that $[\varphi]=0$ on $\Gamma_\ve$. Then the limit functions $(u_{-1}, u_0, X_0)$ satisfy
\begin{align}
\begin{split}
\label{eq:weak-v_eps_w_eps-3}
    &\int_0^T \int_\Omega \int_{\Gamma} (-c_m [u_0]\partial_t [\psi] + \overline{S}(t,x,y)[\psi](t,x,y))  \, \dd S \, \dd x\, \dd t\\ 
    &+ 
    \int_0^T \int_{\Omega} \int_{Y^c \cup Y^e} \sigma (\nabla_x u_{-1} + \nabla_y u_0) \cdot \big(\nabla_x \varphi(t,x) + \nabla_y \psi (t,x,y)\big) \, \dd y\, \dd x\, \dd t\\ 
    &= \int_\Omega \int_{\Gamma} c_m V^{in}(x,y) [\psi](0,x,y)\, \dd S\, \dd x\\
    &-\int_0^T \int_{\Omega} \int_{Y^c \cup Y^e} \sigma (\mathbf{e}_j + \nabla_y \mathcal{M}_j)\partial_{x_j} p_0 \cdot \big(\nabla_x \varphi(t,x) + \nabla_y \psi (t,x,y)\big)\, \dd y \, \dd x\, \dd t\\[2mm]
    & \mbox{and}\\
    &-\int_0^T \int_\Omega \int_{\Gamma} X_0\, \partial_t \phi_2\, \dd S\,\dd x\, \dd t
    = \int_0^T \int_\Omega \int_{\Gamma} \overline{f}(t,x,y)\, \phi_2\, \dd S\,\dd x\, \dd t + \int_\Omega \int_{\Gamma} X^{in}\, \phi_2(0,x)\, \dd S\, \dd x.
\end{split}
\end{align}

In order to identify the limit functions $\overline{S}, \overline{f}$, we will modify \eqref{eq:prob-on-Gamma_eps} so that the passage to the limit can be handled by the monotonicity arguments. One way is to replace $S_m(X_\ve)[u_\ve]$ in \eqref{eq:prob-on-Gamma_eps} by a bounded perturbation of a linear function by setting
\begin{align}
\label{eq:S(v,X)}
S_M([u_\ve], X_\ve)= S_{\rm ir} [u_\ve] + S_L X_\ve T_M([u_\ve]), \quad  T_M([u_\ve]) = \max\big(-M, \min(M, [u_\ve])\big),
\end{align}
for some large $M>0$.
This modification forbids $[u_\ve]$ from exceeding $\pm M$ and is clearly justified in practice since the voltage values are always finite. Note that the results of numerical simulations in Section \ref{sec:numerics} are not affected. The question of the uniform in $\ve$ boundedness of the membrane potential $\|[u_\ve]\|_{L^\infty((0,T)\times \Gamma_\ve)}\le C$, is not trivial and will be considered elsewhere.  

The modified microscopic problem stated on $\Gamma_\ve$ reads:
\begin{align}
\label{eq:Lip-prob}
\begin{split}
    &c_m \partial_t [u_{\ve}] + \ve \mathcal{L}_{\ve} [u_{\ve}] + S_M([u_\ve], X_{\ve}) = -\sigma_\ve \nabla p_\ve \cdot \n, \quad [u_\ve](0,x)=V_\ve^{in}(x) \quad \mbox{on}\ \Gamma_\ve,\\   
    &\partial_t X_\ve
    = f([u_\ve],X_\ve),\quad X_\ve(0,x)=X^{in}(x), \,\,\mbox{on}\,\,\Gamma_\ve,
\end{split}
\end{align}
where $\mathcal{L}_\ve$ is given, as before, by \eqref{def:L_eps}. 
Note that this modification does not change the a priori estimates, and the convergence in Lemma \ref{lm:compactness} is valid. The following lemma states that $S_M$ satisfies one-sided Lipschitz (or monotonicity) condition along the solutions of \eqref{eq:Lip-prob}.
\begin{lemma}
\label{lm:Lip-S}
Let $([u_\ve], X_\ve)$ be a solution of \eqref{eq:Lip-prob} and $\varphi(t,x), \psi(t,x) \in L^2((0,T)\times\Gamma_\ve)$. Then for $S_M$ defined by \eqref{eq:S(v,X)} we have
\begin{align*}
(S_M([u_\ve], X_\ve) - S_M(\varphi, \psi) \, , \, [u_\ve]-\varphi)_{L^2(\Gamma_\ve)} \le
C(M) (\|[u_\ve]-\varphi\|_{L^2(\Gamma_\ve)} + \|X_\ve-\psi\|_{L^2(\Gamma_\ve)}),
\end{align*}
where $C(M)$ depends only on $M$ in the cut-off \eqref{eq:S(v,X)}, on the constants $S_{\rm ir}, S_L$, and is independent of $\ve$. Moreover, there exists $\overline{S}_M \in L^2(0,T; L^2(\Omega \times \Gamma))$ such that $S_M([u_\ve], X_\ve)$ converges weakly two-scale to $\overline{S}_M$ in $L^2(0,T; L^2(\Gamma_\ve))$ (in the sense of Definition \ref{def:2scale-Gamma_eps}).
\end{lemma}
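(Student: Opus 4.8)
The plan is to prove the two assertions in turn: first the pointwise-in-$(t,x)$ algebraic estimate that yields the one-sided Lipschitz bound, then the soft compactness statement that produces the weak two-scale limit $\overline{S}_M$.

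For the Lipschitz bound I would write, with the shorthand $v=[u_\ve]$, $X=X_\ve$,
\[
S_M(v,X)-S_M(\varphi,\psi)=S_0(v-\varphi)+S_1\big(X\,T_M(v)-\psi\,T_M(\varphi)\big),
\]
and split the last bracket as $X\big(T_M(v)-T_M(\varphi)\big)+(X-\psi)\,T_M(\varphi)$. Pairing with $v-\varphi$ in $L^2(\Gamma_\ve)$ (for a.e.\ $t$), the term $S_0\|v-\varphi\|_{L^2(\Gamma_\ve)}^2$ is explicit; for the first piece of the bracket I would use that $T_M$ is nondecreasing and $1$-Lipschitz, so $(T_M(v)-T_M(\varphi))(v-\varphi)\ge 0$ and $|T_M(v)-T_M(\varphi)|\le |v-\varphi|$ pointwise, together with the crucial fact that along a solution $0\le X_\ve\le 1$ (Lemma \ref{lm:bound-w}); this bounds that contribution by $S_1\|v-\varphi\|_{L^2(\Gamma_\ve)}^2$. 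For the second piece, $|T_M(\varphi)|\le M$ and Cauchy--Schwarz give a bound $S_1 M\|X-\psi\|_{L^2(\Gamma_\ve)}\|v-\varphi\|_{L^2(\Gamma_\ve)}$, and Young's inequality then delivers the claimed estimate with $C(M)$ depending only on $S_0$, $S_1$ and $M$. The bound is $\ve$-free because the only place $\ve$ could enter, the $L^\infty$-bound on $X_\ve$, is $\ve$-uniform. It is worth emphasizing that the boundedness of $X_\ve$ is exactly what converts the genuinely nonmonotone quadratic term $S_m(X_\ve)[u_\ve]$ into a Lipschitz perturbation of the monotone map $v\mapsto S_0 v$, and that this is available only \emph{along solutions}, not for arbitrary pairs $(v,X)$.

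For the second part I would first record the uniform weighted bound
\[
\ve\int_0^T\!\!\int_{\Gamma_\ve}|S_M([u_\ve],X_\ve)|^2\,\dd S\,\dd t
\le 2S_0^2\,\ve\int_0^T\!\!\int_{\Gamma_\ve}[u_\ve]^2\,\dd S\,\dd t+2S_1^2 M^2\,\ve\int_0^T\!\!\int_{\Gamma_\ve}1\,\dd S\,\dd t\le C,
\]
where the first term on the right is controlled by the a priori estimate (i) of Lemma \ref{lm:apriori-est}, and the second by $0\le X_\ve\le 1$, $|T_M|\le M$ and the standard surface-measure bound $\ve|\Gamma_\ve|\le C$. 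Boundedness in the weighted $L^2(0,T;L^2(\Gamma_\ve))$ norm then yields, along a subsequence, a weak two-scale limit $\overline{S}_M\in L^2(0,T;L^2(\Omega\times\Gamma))$ by the standard two-scale compactness on oscillating surfaces (as already used for $[u_\ve]$ in Lemma \ref{lm:compactness}(b), see \cite{damlamian1995two}). This part is essentially soft, and in particular it does \emph{not} claim $\overline{S}_M=S_M([u_0],X_0)$.

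The main obstacle is not in the lemma itself, which becomes elementary once $0\le X_\ve\le 1$ is in hand, but in its intended downstream role: the identification $\overline{S}_M=S_M([u_0],X_0)$. The one-sided Lipschitz inequality proved here is precisely the ingredient that makes a Minty-type argument run in Section \ref{sec:homogenization} --- combined with the strong two-scale convergence of $X_\ve$ and an energy (in)equality passed to the limit --- so the genuinely hard work is deferred to that identification step. Within the present lemma the only point requiring care is bookkeeping: tracing every constant to be $\ve$-independent, and using the monotonicity of $T_M$ with the correct sign, so that the $S_0$-contribution and the $X$-monotone contribution are nonnegative and only need to be bounded \emph{from above}.
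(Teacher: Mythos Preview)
Your proposal is correct and follows essentially the same route as the paper: the same add-and-subtract decomposition $X_\ve\big(T_M(v)-T_M(\varphi)\big)+(X_\ve-\psi)T_M(\varphi)$, the same use of $0\le X_\ve\le 1$ from Lemma \ref{lm:bound-w} together with the $1$-Lipschitz property of $T_M$, and for the second assertion the same appeal to the uniform weighted $L^2$ bound and surface two-scale compactness. Your write-up is in fact slightly more explicit (Young's inequality, squared norms) than the paper's own proof, and this matches how the estimate is actually used downstream in \eqref{eq:A-monotonicity}.
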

\begin{proof}
Due to Lemma \ref{lm:bound-w}, $X_\ve$ is uniformly bounded. The truncation map is 1-Lipschitz:
\begin{align*}
\|T_M([u_\ve]) - T_M(\varphi)\|_{L^2(\Gamma_\ve)}
\le \|[u_\ve]-\varphi\|_{L^2(\Gamma_\ve)}.
\end{align*}
Thus, adding and subtracting $T_M(\varphi)\, X_\ve$, we have
\begin{align*}
&(S_M([u_\ve], X_\ve) - S_M(\varphi, \psi) \, , \, [u_\ve]-\varphi)_{L^2(\Gamma_\ve)}\\ 
&\le
S_{\rm ir}\|[u_\ve]-\varphi\|_{L^2(\Gamma_\ve)}
+ S_L\int_{\Gamma_\ve} X_\ve (T_M([u_\ve])-T_M(\varphi))([u_\ve]-\varphi)\, \dd S\\
&+ S_L\int_{\Gamma_\ve} (X_\ve-\psi) T_M(\varphi)([u_\ve]-\varphi)\, \dd S\\
&\le C(M) (\|[u_\ve]-\varphi\|_{L^2(\Gamma_\ve)} + \|X_\ve-\psi\|_{L^2(\Gamma_\ve)}).
\end{align*}
The two-scale convergence is proved in the same way as (f) in Lemma \ref{lm:compactness}. Lemma \ref{lm:Lip-S} is proved.
\end{proof}

Let us introduce a coupled limit problem for the unknowns $(u_{-1}, u_0, X_0)$:
\begin{align}
&{\rm div}_x {\int_{Y^c\cup Y^e} \sigma(\nabla_y u_0 + \nabla_x u_{-1})\, dy}=0, \, & x \in \,\, &\Omega, \nonumber\\
&\diy{\sigma (\nabla_y u_0 + \nabla_x u_{-1})}  =0, \, & y \in \,\, &Y^e\cup Y^c,  \nonumber\\
&[\sigma (\nabla_y u_0+\nabla_x u_{-1}) \cdot \n] =0, \, & y \in \,\, &\Gamma, \nonumber \\
\label{eq:effective-problem-coupled-strong}
& -\sigma^c \nabla_y u_0^c \cdot \n
=c_m \partial_t [u_0] + S_M([u_0], X_0) + \sigma^c\nabla_x u_{-1}\cdot \n &&\\
&\hspace{4.cm} + {\sigma^c (\nabla_y \mathcal{M}_j + \mathbf{e_j})\cdot \n \, \partial_{x_j} p_0},\, & y \in \,\, &\Gamma, \nonumber\\
& \partial_t X_0 = f([u_0], X_0), \quad X_0(0,x,y)=X^{in}(x),\, & y \in \,\, &\Gamma, \nonumber\\
& [u_0](0,x,y)=V^{in}(x,y),\, & y \in \,\, &\Gamma, \nonumber\\
& u_{-1}\big|_{\partial \Omega}=0, \quad u_0^e(x,\cdot) \,\, \mbox{is} \, \, Y-\mbox{periodic}.\nonumber
\end{align}
Here $p_0$ solves the classical limit problem \eqref{eq:p_0}. Note that $u_0$ is defined up to an additive constant in $y$, as the classical corrector in homogenization.
The weak formulation of \eqref{eq:effective-problem-coupled-strong} reads: find $u_{-1}\in L^2(0,T; H^1(\Omega))$, $u_0\in L^2(0,T; L^2(\Omega; H^1(Y^c \cup Y^e)/\mathbb R))$, $X_0\in H^1(0,T; L^2(\Omega\times \Gamma))$ such that for any $\varphi \in C_0^1([0,T]\times\Omega)$, $\psi \in C^1(0,T; C^1(\overline{\Omega} \times C_{\mathrm{per}}^1(Y^c \cup Y^e)))$, and  $\phi_2 \in C_0^1([0,T]\times \Omega; C_{\mathrm{per}}^1(Y^c \cup Y^e))$, such that
\begin{align*}
\begin{split}
    &\int_0^T \int_\Omega \int_{\Gamma} (-c_m [u_0]\partial_t [\psi] -X_0\, \partial_t \phi_2)  \, \dd S \, \dd x\, \dd t \\ 
    &+ 
    \int_0^T \int_{\Omega} \int_{Y^c \cup Y^e} \sigma (\nabla_x u_{-1} + \nabla_y u_0) \cdot \big(\nabla_x \varphi(t,x) + \nabla_y \psi (t,x,y)\big) \, \dd y\, \dd x\, \dd t\\
    &+\int_0^T \int_\Omega \int_{\Gamma} (S_M([u_0], X_0)[\psi](t,x,y)) - f([u_0], X_0)\, \phi_2)\, \dd S\,\dd x\, \dd t\\ 
    &= \int_\Omega \int_{\Gamma} c_m V^{in}(x,y) [\psi](0,x,y)\, \dd S\, \dd x + \int_\Omega \int_{\Gamma} X^{in}\, \phi_2(0,x)\, \dd S\, \dd x\\
    &-\int_0^T \int_{\Omega} \int_{Y^c \cup Y^e} \sigma (\mathbf{e}_j + \nabla_y \mathcal{M}_j)\partial_{x_j} p_0 \cdot \big(\nabla_x \varphi(t,x) + \nabla_y \psi (t,x,y)\big)\, \dd y \, \dd x\, \dd t.
\end{split}
\end{align*}

Above, we have derived a limit equality \eqref{eq:weak-v_eps_w_eps-3} without identifying the limits for the nonlinear functions. Using Lemma \ref{lm:Lip-S}, we will employ the monotonicity argument to prove that $\overline{S}=S_M([u_0], X_0)$, $\overline{f}=f([u_0], X_0)$. 
Let us write \eqref{eq:Lip-prob} in a more compact form 
\begin{align*}
\partial_t U_\ve  + \mathbb{A}_\ve(U_\ve) = F_\ve, \quad U_\ve\big|_{t=0}=U_\ve^{in} \quad \mbox{on}\,\, \Gamma_\ve,
\end{align*}
where
\begin{align*}
\begin{split}
&U_\ve=\begin{pmatrix}
    [u_\ve]\\ X_\ve
\end{pmatrix}, \quad
U_\ve^{in}=\begin{pmatrix}
    V_\ve^{in}\\ X_\ve^{in}
\end{pmatrix},\\[2mm]
&\mathbb{A}_\ve(U_\ve) =
\begin{pmatrix}
\frac{\ve}{c_m} \mathcal{L}_\ve [u_\ve] + \frac{1}{c_m}S_M([u_\ve], X_{\ve})&0\\0&f([u_\ve], X_\ve)
\end{pmatrix},\quad
F_\ve = 
\begin{pmatrix}
-\frac{1}{c_m}\sigma_\ve \nabla p_\ve \cdot \n \\
0
\end{pmatrix}.     
\end{split}
\end{align*}
Denote $H_\ve=L^2(\Gamma_\ve) \times L^2(\Gamma_\ve)$, $V_\ve=H^{1/2}(\Gamma_\ve)$, $V_\ve^\ast=H^{-1/2}(\Gamma_\ve)$. For any test function $\Phi=(\phi_1, \phi_2)\in C([0,T]; H_\ve) \cap L^2(0,T; V_\ve)$, due to the coercivity of $\mathcal{L}_\ve$ \eqref{eq: L_eps_coercivity}, Lipschitz continuity of $f$ \eqref{eq:Lip-f}, and Lemma \ref{lm:Lip-S} we obtain 
\begin{align}
\label{eq:A-monotonicity}
\int_0^t \langle \mathbb{A}_\ve(U_\ve(s, \cdot)) - \mathbb{A}_\ve(\Phi(s, \cdot)) \, , \, U_\ve(s, \cdot) - \Phi(s, \cdot)\rangle_{V_\ve^\ast, V_\ve}\, \dd s
\ge - \rho \int_0^t \|U_\ve(s, \cdot) - \Phi(s, \cdot)\|_{H_\ve}^2 \, \dd s.
\end{align}
Property \eqref{eq:A-monotonicity} can be interpreted as the generalized monotonicity of $\mathbb{A}_\ve$ along the solutions, since we use the uniform boundedness of $X_\ve$ to derive this estimate.
Using the product rule for $e^{-2\rho t}\|U_\ve\|_{H_\ve}^2$ with $\rho$ defined in \eqref{eq:A-monotonicity}, we have
\begin{align*}
&\frac{1}{2}\frac{\dd}{\dd t} \Big(e^{-2\rho t}\|U_\ve\|_{H_\ve}^2\Big)
+ \rho e^{-2\rho t}\|U_\ve\|_{H_\ve}^2
+ e^{-2\rho t}\langle \mathbb{A}_\ve(U_\ve), U_\ve\rangle_{V_\ve^\ast, V_\ve} = e^{-2\rho t} \langle F_\ve, U_\ve\rangle_{V_\ve^\ast, V_\ve},\\
& 
\frac{1}{2}e^{-2\rho t}\|U_\ve\|_{H_\ve}^2 =
\frac{1}{2} \|U_\ve^{in}\|_{H_\ve}^2
- \int_0^t e^{-2\rho s}\Big(\langle \mathbb{A}_\ve(U_\ve), U_\ve\rangle_{V_\ve^\ast, V_\ve} + \rho \|U_\ve\|_{H_\ve}^2\Big)\, \dd s\\
& \hspace{2.5cm} \int_0^t e^{-2\rho s} \langle F_\ve, U_\ve\rangle_{V_\ve^\ast, V_\ve}\, \dd s.
\end{align*}
We add and subtract in the last identity $\langle \mathbb{A}_\ve(U_\ve) - \mathbb{A}_\ve(\Phi), \Phi\rangle_{V_\ve^\ast, V_\ve}$, $\langle \mathbb{A}_\ve(\Phi), U_\ve\rangle_{V_\ve^\ast, V_\ve}$, and write
\begin{align*}
\|U_\ve\|_{H_\ve}^2 = \|U_\ve-\Phi\|_{H_\ve}^2 + \|\Phi\|_{H_\ve}^2 + 2(U_\ve - \Phi, \Phi)_{H_\ve},
\end{align*}
which yields
\begin{align*}
&e^{-2\rho t}\|U_\ve\|_{H_\ve}^2 =
\|U_\ve^{in}\|_{H_\ve}^2
- 2 \int_0^t e^{-2\rho s}\Big(\langle \mathbb{A}_\ve(U_\ve)-\mathbb{A}_\ve(\Phi), U_\ve-\Phi\rangle_{V_\ve^\ast, V_\ve} + \rho \|U_\ve-\Phi\|_{H_\ve}^2\Big)\, \dd s\\
& - 2 \int_0^t e^{-2\rho s}\Big(\langle \mathbb{A}_\ve(\Phi), U_\ve\rangle_{V_\ve^\ast, V_\ve} + \rho \|\Phi\|_{H_\ve}^2\Big)\, \dd s\\
& - 2 \int_0^t e^{-2\rho s}\Big(\langle \mathbb{A}_\ve(U_\ve)-\mathbb{A}_\ve(\Phi),\Phi\rangle_{V_\ve^\ast, V_\ve} + 2 \rho (U_\ve - \Phi, \Phi)_{H_\ve} \Big)\, \dd s\\
&+ 2\int_0^t e^{-2\rho s} \langle F_\ve, U_\ve\rangle_{V_\ve^\ast, V_\ve}\, \dd s.
\end{align*}
Using the monotonicity property \eqref{eq:A-monotonicity}, we can estimate the RHS of the last equality:
\begin{align}
\label{eq:ineq-1}
\begin{split}
&e^{-2\rho t}\|U_\ve\|_{H_\ve}^2 \le
\|U_\ve^{in}\|_{H_\ve}^2
- 2 \int_0^t e^{-2\rho s}\Big(\langle \mathbb{A}_\ve(\Phi), U_\ve\rangle_{V_\ve^\ast, V_\ve} + \rho \|\Phi\|_{H_\ve}^2\Big)\, \dd s\\
& - 2 \int_0^t e^{-2\rho s}\Big(\langle \mathbb{A}_\ve(U_\ve)-\mathbb{A}_\ve(\Phi),\Phi\rangle_{V_\ve^\ast, V_\ve} + 2 \rho (U_\ve - \Phi, \Phi)_{H_\ve} \Big)\, \dd s\\
&+ 2 \int_0^t e^{-2\rho s} \langle F_\ve, U_\ve\rangle_{V_\ve^\ast, V_\ve}\, \dd s.
\end{split}
\end{align}
Multiplying \eqref{eq:ineq-1} by a nonnegative $\Psi\in C_0^\infty(0,T)$ and integrating over $(0,T)$, we obtain
\begin{align}
\label{eq:ineq-2}
\begin{split}
&\int_0^T \Psi(t) \Big(e^{-2\rho t}\limsup_{\ve \to 0} \|U_\ve\|_{H_\ve}^2 - \|U^{in}\|_{H_0}^2\Big)\, \dd t\\
&\le -
\liminf_{\ve \to 0} \Big(2 \ve \int_0^T \Psi(t)\int_0^t e^{-2\rho s}\Big(\langle \mathbb{A}_\ve(\Phi), U_\ve\rangle_{V_\ve^\ast, V_\ve} + \rho \|\Phi\|_{H_\ve}^2\Big)\, \dd s\, \dd t\\
& + 2 \ve \int_0^T \Psi(t)\int_0^t e^{-2\rho s}\Big(\langle \mathbb{A}_\ve(U_\ve)-\mathbb{A}_\ve(\Phi),\Phi\rangle_{V_\ve^\ast, V_\ve} + 2 \rho (U_\ve - \Phi, \Phi)_{H_\ve} \Big)\, \dd s\, \dd t\\
&- 2\int_0^T \Psi(t)\int_0^t e^{-2\rho s} \langle F_\ve, U_\ve\rangle_{V_\ve^\ast, V_\ve}\, \dd s\, \dd t\Big).
\end{split}
\end{align}
Let us choose a test function $\Phi=([\phi_1], \phi_2)$ in the form of the asymptotic ansatz for the solution $(u_\ve, X_\ve)$
\begin{align*}
&\phi_1 = \ve^{-1} \varphi_{-1}(t,x) + \varphi_0(t,x,\frac{x}{\ve}), \quad \varphi_{-1}\in C([0,T]; C^1(\Omega)), \,\, \varphi_0 \in C([0,T]; C^1(\overline{\Omega}; C_{\mathrm{per}}^1(Y^c \cup Y^e))),\\
&\phi_2=\phi_2(t,x,\frac{x}{\ve}), \,\, \phi_2(t,x,y) \in C([0,T]; C^1(\overline{\Omega}; C_{\mathrm{per}}^1(Y^c \cup Y^e))).
\end{align*}
Note that $[\varphi_{-1}]=0$ through $\Gamma_\ve$.
Since the test functions converge strongly two-scale, we can pass to the limit in the nonlinear functions. Denote the two-scale limit of $([u_\ve], X_\ve)$ in $L^2(0,T; L^2(\Gamma_\ve))^2$ by $U=([u_0], X_0)$, and denote $H_0=L^2(\Omega \times \Gamma)^2$. Passing to the limit (along a subsequence) in \eqref{eq:ineq-2} yields
\begin{align}
\label{eq:ineq-3}
\begin{split}
&\int_0^T \Psi(t) \Big(e^{-2\rho t}\limsup_{\ve \to 0} \|U_\ve\|_{H_\ve}^2 - \|U^{in}\|_{H_0}^2\Big)\, \dd t\\ 
&\le 
- 2 \int_0^T \Psi(t)\int_0^t e^{-2\rho s}\Big( \frac{1}{c_m} \int_{\Omega}\int_{Y^c \cup Y^e} \sigma(\nabla_x u_{-1} + \nabla_y u_0) \cdot (\nabla_x \varphi_{-1} + \nabla_y \varphi_0)\, \dd y\, \dd x\, \dd t\\ 
&+ \frac{1}{c_m} \int_{\Omega}\int_\Gamma S_M([\varphi_0], \phi_2)[u_0]\, \dd S\, \dd x - \frac{1}{c_m} \int_{\Omega}\int_\Gamma f([\varphi_0], \phi_2)X_0\, \dd S\, \dd x+ \rho \|\Phi\|_{H_0}^2\Big)\, \dd s \, \dd t\\
& - 2 \int_0^T \Psi(t)\int_0^t e^{-2\rho s}
\Big(
\frac{1}{c_m} \int_{\Omega}\int_{Y^c \cup Y^e} \sigma(\nabla_x (u_{-1}-\varphi_{-1}) + \nabla_y (u_0-\varphi_0)) \cdot (\nabla_x \varphi_{-1} + \nabla_y \varphi_0)\, \dd y\, \dd x\\ 
&+ \frac{1}{c_m} \int_{\Omega}\int_\Gamma \big(\overline{S}_M-S_M([\varphi_0], \phi_2)\big)[\varphi_0]\, \dd S\, \dd x 
- \frac{1}{c_m} \int_{\Omega}\int_\Gamma (\overline{f}-f([\varphi_0], \phi_2))\phi_2\, \dd S\, \dd x\\ 
&+ 2 \rho (U - \Phi, \Phi)_{H_0}
\Big)\, \dd s\, \dd t\\
& -2\int_0^T \Psi(t)\int_0^t e^{-2\rho s} \frac{1}{c_m}\int_{\Omega} \int_{Y^c \cup Y^e} \sigma (\mathbf{e}_j + \nabla_y \mathcal{M}_j) \partial_{x_j} p_0 \cdot (\nabla_x u_{-1} + \nabla_y u_0)\,\dd y\, \dd x\,  \dd s\, \dd t.
\end{split}
\end{align}
Note that the passage to the limit in the last term uses the strong convergence of the gradient $\nabla p_\ve$ (see \eqref{eq:convergence_p_eps}). {By density, the last inequality is valid for all $\varphi \in L^2(0,T; H_0^1(\Omega))$, $\psi \in L^2((0,T)\times \Omega; H_{\mathrm{per}}^1(Y^c \cup Y^e))$.}
Similarly, choosing the test functions $\varphi=u_{-1}$, $\psi=u_0$, and $\phi_2= X_0$ and applying the product rule in \eqref{eq:weak-v_eps_w_eps-3}, we have
\begin{align}
\label{eq:ineq-4}
\begin{split}
&\int_0^T \Psi(t) \Big(e^{-2\rho t}\|U\|_{H_0}^2
- \|U^{in}\|_{H_0}^2\Big)\, \dd t\\
&= - 2 \int_0^T \Psi(t)\int_0^t e^{-2\rho s}\Big( \frac{1}{c_m} \int_{\Omega}\int_{Y^c \cup Y^e} \sigma|\nabla_x u_{-1} + \nabla_y u_0|^2\, \dd y\, \dd x\\ 
&+ \frac{1}{c_m} \int_{\Omega}\int_\Gamma \overline{S}_M[u_0]\, \dd S\, \dd x - \frac{1}{c_m} \int_{\Omega}\int_\Gamma \overline{f} X_0\, \dd S\, \dd x + \rho \|U^{in}\|_{H_0}^2\\
&{+\frac{1}{c_m}\int_{\Omega} \int_{Y^c \cup Y^e} \sigma (\mathbf{e}_j + \nabla_y \mathcal{M}_j)\partial_{x_j} p_0 \cdot \big(\nabla_x u_{-1} + \nabla_y u_0\big)\, \dd y \, \dd x}\Big)\, \dd s\, \dd t.
\end{split}
\end{align}
Since $U_\ve=([u_\ve], X_\ve)$ converges weakly two-scale to $U=([u_0], X_0)$ in $L^2(\Gamma_\ve)$, then for an arbitrary $\Psi \in C_0([0,T])$, the following semi-continuity holds:
\begin{align*}
\int_0^T \Psi\|U\|_{H_0}^2\, \dd t
\le \liminf_{\ve \to 0} \int_0^T \Psi \|U_\ve\|_{H_\ve}^2\, \dd t.
\end{align*}
Subtracting \eqref{eq:ineq-4} from \eqref{eq:ineq-3}  and rearranging the terms, we obtain the following inequality:
\begin{align}
\label{eq:ineq-5}
\begin{split}
&0\le \int_0^T \Psi(t) e^{-2\rho t}\Big(\limsup_{\ve \to 0}\|U_\ve\|_{H_\ve}^2 - \|U\|_{H_0}^2\Big)\, \dd s\, \dd t\\
& \le \int_0^T \Psi(t)\int_0^t e^{-2\rho s}\Big( \frac{1}{c_m} \int_{\Omega}\int_{Y^c \cup Y^e} \sigma|\nabla_x (u_{-1}-\varphi_{-1}) + \nabla_y (u_0-\varphi_0)|^2\, \dd y\, \dd x\\ 
&+ \frac{1}{c_m} \int_{\Omega}\int_\Gamma (\overline{S}_M-S_M([\varphi_0], \phi_2))([u_0]-[\varphi_0])\, \dd S\, \dd x\\
&- \frac{1}{c_m} \int_{\Omega}\int_\Gamma (\overline{f}-f([\varphi_0], \phi_2)) (X_0-\phi_2)\, \dd S\, \dd x+ \rho \|U-\Phi\|_{H_0}^2\Big)\, \dd s \, \dd t.
\end{split}
\end{align}
Let us now take 
\begin{align*}
\varphi_{-1}= u_{-1}+ \delta \tilde{\varphi}_{-1},\quad \varphi_0 = u_0 + \delta \tilde{\varphi}_{0},\quad \phi_2 = X_0 + \delta \tilde{\phi}_2.
\end{align*}
In the next step, we divide by $\delta>0$ and pass to the limit as $\delta \to 0^+$. After that we divide by $\delta<0$ and pass to the limit as $\delta \to 0^-$, which yields
\begin{align*}
&\overline{S}_M = \lim_{\delta \to 0} S_M([u_0 + \delta \tilde{\varphi}_{0}], X_0 + \delta \tilde{\phi}_2) = S_M([u_0], X_0), \\
&\overline{f} = \lim_{\delta \to 0} f([u_0 + \delta \tilde{\varphi}_{0}], X_0 + \delta \tilde{\phi}_2) = f([u_0], X_0).
\end{align*}
Moreover, from inequality \eqref{eq:ineq-5} we deduce the strong two-scale convergence of $U_\ve=([u_\ve], X_\ve)$:
\begin{align*}
\limsup_{\ve \to 0}\|U_\ve\|_{H_\ve}^2 = \|U\|_{H_0}^2,
\end{align*}
which proves \eqref{eq:strong-2scale}.

The existence of a solution $(u_{-1}, u_0, X_0)$ to the macroscopic problem \eqref{eq:effective-problem-coupled-strong} can be proved by using Galerkin approximations as in Section \ref{sec:existence}, a fixed point argument, or the methods in \cite{showalter1974degenerate}, \cite{showalter2013monotone}. Note that the time derivative $\partial_t [u_0]$ is not present in the equation, so the system is of elliptic-parabolic type. 
Let us prove the uniqueness. Assume that there exist two solutions $(u_{-1}, u_0, X_0)$ and $(\tilde{u}_{-1}, \tilde{u}_0, \tilde{X}_0)$. Then the differences satisfy
\begin{align*}
&\frac{1}{2} \frac{\dd}{\dd t} \int_{\Omega} \int_{\Gamma} [u_{0}-\tilde{u}_{0}]^2\, \dd S\, \dd x
+ \frac{1}{2} \frac{\dd}{\dd t} \int_{\Omega} \int_{\Gamma} (X_0-\tilde{X}_0)^2\, \dd S\, \dd x\\
& \int_\Omega \int_{Y^c \cup Y^e} \sigma |\nabla_x(u_{-1}-\tilde{u}_{-1}) + \nabla_y(u_0-\tilde{u}_0)|^2\, \dd y\, \dd x\\
&+ \int_{\Omega} \int_{\Gamma} \big(S_M([u_0], X_0) - S_M([\tilde{u}_0], \tilde{X}_0)\big)\,[u_{0}-\tilde{u}_{0}]\, \dd S\, \dd x\\
&- \int_{\Omega} \int_{\Gamma} \big(f([u_0], X_0) - f([\tilde{u}_0], \tilde{X}_0)\big)\,[X_0-\tilde{X}_0]\, \dd S\, \dd x=0.
\end{align*}
Using the Lipschitz continuity of $S_M$ and $f$, the coercivity of $\sigma$, and applying the Grönwall inequality, we see that $[u_0-\tilde{u}_0]=0$ and $X_0-\tilde{X}_0=0$ for $(x,y)\in \Omega \times \Gamma$. Then the last identity yields $\int_{\Omega} \int_\Gamma|\nabla_x(u_{-1}-\tilde{u}_{-1}) + \nabla_y(u_0-\tilde{u}_0)|^2\, \dd S \, \dd x=0$, and thus, due to the periodicity of $u_0, \tilde{u}_0$ in $y$, we have $\nabla_x(u_{-1}-\tilde{u}_{-1})=0$ a.e. in $\Omega$, and $\nabla_y(u_0-\tilde{u}_0)=0$ for a.a. $y\in Y^c \cup Y^e$. Since $u_{-1}=\tilde{u}_{-1}=0$ on $\partial \Omega$, $u_{-1}=\tilde{u}_{-1}$ for almost all $x\in \Omega$. Finally, since $[u_0-\tilde{u}_0]=0$ for a.a. $y\in Y^c \cup Y^e$, $u_0(t,x,\cdot)\sim \tilde{u}_0(t,x,\cdot)$ as elements of the quotient space $H_{\mathrm{per}}^1(Y^c \cup Y^e)/\mathbb R$. The uniqueness is proved, and thus, the whole sequence $(u_\ve, X_\ve)$ converges, as $\ve \to 0$, to the unique solution of the limit problem \eqref{eq:effective-problem-coupled-strong}.  Theorem \ref{th:main} is proved. 

\section{Numerical simulations}\label{sec:numerics}
In Section \ref{sec:general-algorithm} we present an algorithm for computing $(u_{-1}, u_0, X_0)$ solving \eqref{eq:u_(-1)-gamma-1}, \eqref{eq:u_0}, and \eqref{eq:X_0-gamma-1}. In Section \ref{sec:parallel-plates} we consider a specific case when the domain is $\Omega=(0,L)\times \mathbb R$, and a constant voltage is applied on the boundary $x_1=0, L$. Section \ref{sec:numerical_results} contains the simulation results. We focus on the effective conductivity $\sigma_\eff$ and the average membrane conductivity $\overline{S}_m$ over $\Gamma$. We illustrate how both quantities depend on the applied electric field and how they evolve with time. 

\subsection{General solution algorithm}
\label{sec:general-algorithm}
In order to solve \eqref{eq:u_(-1)-gamma-1} for $u_{-1}$ one needs to compute the matrices $A,B$ and $C$, which in turn contain $\mathcal{M}, \chi_j^0$ and $z$, meaning that one needs to solve first the cell problems \eqref{eq:cell-M}, \eqref{eq:T-eps^(-1)-g/eps} and \eqref{eq:chi^0-gamma-1}. Solving \eqref{eq:cell-M} to obtain $\mathcal{M}$ is a stationary coupled cell problem, but \eqref{eq:T-eps^(-1)-g/eps} and \eqref{eq:chi^0-gamma-1} are nontrivial evolution problems. Specifically, they are nonlinear coupled cell problems, where problem \eqref{eq:chi^0-gamma-1} depends on the additional parameter $\tau$ (the initial time) giving rise to additional memory effects. Due to the steepness of the function $\beta$, we get abrupt changes in the jump $[u_0]$ and the degree of poration $X_0$. Algorithm~\ref{alg:complete_solution_between_plates_new} describes the steps to solve \eqref{eq:u_(-1)-gamma-1} for $u_{-1}$ as well as \eqref{eq:[w0]-gamma-1} for $[u_0]$ and \eqref{eq:X_0-gamma-1} for $X_0$ in the general case. 

\begin{algorithm}[H]
    \caption{Solving for $u_{-1}$, $u_0$, $X_0$ 
    } 
\begin{algorithmic}[1]
\State Determine $\mathcal M(y)$ by solving \eqref{eq:cell-M} in $Y^c\cup Y^e$.
\State Compute $A_{ij}$ using \eqref{def:A}.
\State At $t_0=0$, set $[z](0,x,y)=[u_0](0,x,y)=V^{in}(x,y)$, $[\chi_j^0](0, 0, x,y)= -\frac{\sigma^c}{c_m} ( \nabla_y \mathcal{M}_j^c(y) + e_j) \cdot \n$, $X_0(0,x,y)=X^{in}(x)$, and compute $B(0,0,x)$ using \eqref{def:B}.
	\For {$k=1,2,\ldots$}
        \For {$m=0,\ldots,k-1$}
            \State Use $[\chi^0_j](t_{k-1},\tau_m,x,y)$ and $X_0(t_{k-1},x,y)$ to solve for $[\chi^0_j](t_{k},\tau_m,x,y)$
            \State Compute $B_{ij}(t_k,\tau_m,x)$ using \eqref{def:B}
        \EndFor
        \State Set $\displaystyle [\chi^0_j](t_k,\tau_k,x,y) = -\frac{\sigma^c}{c_m} ( \nabla_y \mathcal{M}_j^c(y) + e_j) \cdot \n$ and set $B(t_k,\tau_k,x) = B(0,0,x)$.
        \State Given $z(t_{k-1},x,y)$, solve \eqref{eq:T-eps^(-1)-g/eps} to get $z(t_k,x,y)$, and compute $C(t_k,x)$ using \eqref{def:C}.
        \State Solve \eqref{eq:u_(-1)-gamma-1} to determine $u_{-1}(t_k, x)$.
        \State Integrate over $\tau \in [0,t_k]$ to compute $[u_0](t_k,x,y)$ from \eqref{eq:[w0]-gamma-1}.
		\State Given $X_0(t_{k-1},x,y)$ and $[u_0](t_{k-1},x,y)$, use \eqref{eq:X_0-gamma-1}
        to determine $X_0(t_k,x,y)$.
	\EndFor
\end{algorithmic} 
\label{alg:complete_solution_between_plates_new}
\end{algorithm}
In the above algorithm, it is easiest to assume a regular grid with $t_k = k \Delta t$ and $\tau_m=m \Delta t$ to be able to apply initial conditions where $t=\tau$. 
Adaptive time-stepping schemes could be applied to provide more efficient discretizations in both $t$ and $\tau$, and higher order methods such as Runge-Kutta schemes could also be used for time-stepping, but the outline in Algorithm~\ref{alg:complete_solution_between_plates_new} still applies.

\subsection{Algorithm for the case of two parallel plates} \label{sec:parallel-plates}

In order to facilitate the solution of the problem and illustrate the solutions to the cell problems, we consider a simplified test case. It corresponds to the case of two parallel, infinite electrodes with a constant applied voltage difference. We let $\Omega = (0,L) \times \mathbb{R}$, set the boundary conditions $p_\ve(0,x_2)=0$ and $p_\ve(L,x_2)=g$, where $g$ is constant in $x_2$, and assume a homogeneous medium. This implies $p_0(0,x_2)=0$ and $p_0(L,x_2)=g$. The macroscopic problem \eqref{eq:u_(-1)-gamma-1} in the $x$-variable will then be independent of $x_2$ and the solution for $p$ is $p(x_1)=g x_1/L$, independent of $t$.
Furthermore, we note that if $V^{in}=0$, the solution for $z$ in problem \eqref{eq:T-eps^(-1)-g/eps} will be identically zero for all times (regardless of $X_0(t,y)$). And in this case, $C(t,x)=0$.
Then clearly $u_{-1}(x)=0$ is a solution to \eqref{eq:u_(-1)-gamma-1} in $\Omega$. Thus,
\begin{equation*}
    \nabla (u_{-1} + p_0) = \frac{g}{L}e_1
\end{equation*}
for all times, and we obtain for $[u_0]$ from \eqref{eq:u_0} that
\begin{equation}
    [u_0](t,y) = \frac{g}{L} \int_0^t [\chi_1^0](t,\tau,y)\, \dd\tau.
    \label{eq:w0_jump_between_plates}
\end{equation}
To obtain a solution $([u_0],X_0)$, we need then only to solve the cell problem \eqref{eq:cell-M} once with $j=1$ to obtain the initial condition for $[\chi_1^0]$, and then repeatedly solve the cell problem for $[\chi_1^0]$ and the ODE:s for $X_0$. We also define the effective conductivity
\begin{equation}\label{eq:sigma_eff_def}
    \sigma_{\text{eff}}(t) = {A} + \mathcal{B}(t) := A + \int_0^t B(t,\tau) \, \dd \tau
\end{equation}
Notice that $p_0$ satisfies the homogenized equation $\di{A \nabla p_0}=0$ and thus, $A$ is present in \eqref{eq:sigma_eff_def} even though $u_{-1}=0$.

Finally, if the cell geometry is symmetric, we get $A_{21}=B_{21}=0$.
 In the two-dimensional case, if the periodicity cell is symmetric with respect to $y_2$, the solution possesses also symmetry, namely, $\mathcal{M}_1(y_1, y_2)=\mathcal{M}_1(y_1, -y_2)$ and $\mathcal{M}_1(y_1, y_2)=-\mathcal{M}_1(-y_1, y_2)$. Since $\mathcal{M}_1$ is even in $y_2$, its derivative is odd and by \eqref{def:A} we obtain that $A_{21} = 0$. Similar to $\mathcal{M}_1$ one can check that $\chi_1^0(t, \tau, y_1, y_2)=\chi_1^0(t, \tau, y_1, -y_2)$ for symmetric periodicity cells, and therefore we get that $\chi_1^0$ is even in $y_2$, and its derivative odd. Note that the initial condition for $\chi_1^0$ is symmetric in $y_2$. By \eqref{def:B} we conclude $B_{21} = 0$.

Thus, the complexity of the problem is reduced, and we obtain the following algorithm
\begin{algorithm}[H]
    \caption{Solving for $u_0$, $X_0$ and $\mathcal{B}$ between two plates} 
\begin{algorithmic}[1]
\State Determine $\mathcal M_1(y)$ by solving \eqref{eq:cell-M} in $Y^c\cup Y^e$.
\State Compute $A_{11}$ using \eqref{def:A}.
\State At $t_0=0$: $[\chi_1^0](0, 0,y)= -\frac{\sigma^c}{c_m} ( \nabla_y \mathcal{M}_1^c(y) + e_1) \cdot \n$, $X_0(0,y)=X^{in}$, and $\mathcal{B}(0)=0$.
	\For {$k=1,2,\ldots$}
        \For {$m=0,\ldots,k-1$}
            \State Using $[\chi^0_1](t_{k-1},\tau_m,y)$ and $X_0(t_{k-1},y)$ to solve for $[\chi^0_1](t_{k},\tau_m,y)$
            \State Compute $B_{11}(t_k,\tau_m)$
        \EndFor
        \State Set $\displaystyle [\chi^0_1](t_k,\tau_k,y) = -\frac{\sigma^c}{c_m} ( \nabla_y \mathcal{M}_1^c(y) + e_1) \cdot \n$ and compute $B(t_k,\tau_k)$. 
        \State Integrate over $\tau \in [0,t_k]$ to compute $[u_0](t_k,y)$ and $\mathcal{B}(t_k)$.
		\State Given $X_0(t_{k-1},y)$ and $[u_0](t_{k-1},y)$, use \eqref{eq:X_0-gamma-1}
        to determine $X_0(t_k,y)$.
	\EndFor
\end{algorithmic} 
\label{alg:complete_solution_between_plates}
\end{algorithm}

Although the macroscopic solution $u_{-1}=w_{-1}+p$ is known, our goal is to compute the degree of electroporation $X_0$ as well as the time dependence of $\chi^0$ and most importantly $\sigma_\eff$ for  this case. 

\subsubsection{Computation of $A$}
In order to solve the cell problem for $\mathcal{M}$ \eqref{eq:cell-M}, we write it in component form using the definition of the jump to get
\begin{align}
    &\diy{\sigma ( \nabla_y \mathcal{M}_j(y) + e_j)}  = 0, \, & y \in \,\, &Y^c\cup Y^e,  \nonumber\\
    & \sigma^c ( \nabla_y \mathcal{M}_j^c(y) + e_j) \cdot \n = \sigma^e ( \nabla_y \mathcal{M}_j^e(y) + e_j) \cdot \n =: I_m(y)  , \, & y \in \,\, &\Gamma, \label{eq:cell-M_comp}\\
    & \mathcal{M}_j^c(y) - \mathcal{M}_j^e(y)  =: V(y), \, & y \in \,\, &\Gamma.\nonumber
\end{align}
We formulate the problem for a general jump $V(y)$, noting that $V=0$ for the cell problem \eqref{eq:cell-M}.

The variational formulation is derived similar to \cite{kuchta2021solving}, multiplying by test functions $v^c \in H^1(Y^c)$ and $v^e \in H^1(Y^e)$ and integrating the divergence by parts to yield the problem: find $\mathcal{M}_j^c \in H^1(Y^c)$ and $\mathcal{M}_j^e \in H^1(Y^e)$ such that
\begin{align*}
    &\int_{Y^c} \sigma^c ( \nabla_y \mathcal{M}_j^c(y) + e_j) \cdot \nabla v^c \, \dd y - \int_\Gamma \sigma^c ( \nabla_y \mathcal{M}_j^c(y) + e_j) \cdot \n \,  v^c \, \dd S = 0, \\
    &\int_{Y^e} \sigma^e ( \nabla_y \mathcal{M}_j^e(y) + e_j) \cdot \nabla v^e \, \dd y + \int_\Gamma \sigma^e ( \nabla_y \mathcal{M}_j^e(y) + e_j) \cdot \n \, v^e \, \dd S = 0.
\end{align*}
Recognizing the membrane current density $I_m$ defined in \eqref{eq:cell-M_comp} in the boundary integrals we identify three unknowns $\mathcal{M}_j^c \in H^1(Y^c),\ \mathcal{M}_j^e \in H^1(Y^e)$ and $I_m \in H^{-1/2}(\Gamma)$. Now multiply the third equation of \eqref{eq:cell-M_comp} by a test function $j_m \in H^{-1/2}(\Gamma)$ to get
\begin{align*}
    \int_\Gamma \mathcal{M}_j^c j_m \, \dd S - \int_\Gamma \mathcal{M}_j^e j_m \, \dd S = \int_\Gamma V j_m \, \dd S.
\end{align*}
The multi-dimensional primal form of \eqref{eq:cell-M_comp} therefore reads: find $\mathcal{M}_j^c \in H^1(Y^c), \mathcal{M}_j^e \in H^1(Y^e)$ and $I_m \in H^{-1/2}(\Gamma)$ such that
\begin{align}
    &\int_{Y^c} \sigma^c \nabla_y \mathcal{M}_j^c(y) \cdot \nabla v^c \, \dd y = \int_\Gamma I_m v^c \, \dd S - \int_{Y^c} \sigma^c e_j \cdot \nabla v^c \, \dd y, \nonumber \\
    &\int_{Y^e} \sigma^e \nabla_y \mathcal{M}_j^e(y) \cdot \nabla v^e \, \dd y = - \int_\Gamma I_m v^e \, \dd S - \int_{Y^e} \sigma^e e_j \cdot \nabla v^e \, \dd y, \label{eq: var_form_M} \\
    &\int_\Gamma \mathcal{M}_j^c j_m \, \dd S - \int_\Gamma \mathcal{M}_j^e j_m \, \dd S = \int_\Gamma V j_m \, \dd S, \nonumber
\end{align}
for all $v^c \in H^1(Y^c), v^e \in H^1(Y^e)$ and $j_m \in H^{-1/2}(\Gamma)$. In the vector form:
\begin{align}
a(u,v) = L(v), 
\label{eq:cell-Matrix_problem_M}
\end{align}
where for $u = (\mathcal{M}_j^c, \mathcal{M}_j^e,I_m )$ and $v= (v^c, v^e, j_m)$
\begin{align} \label{eq: a_mat L_mat L_mat}
    a(u,v) = \begin{pmatrix}
        \int_{Y^c} \sigma^c \nabla_y \mathcal{M}_j^c \cdot \nabla v^c \, \dd y -\int_\Gamma I_m v^c \, \dd S \\[2mm]
        \int_{Y^e} \sigma^e \nabla_y \mathcal{M}_j^e \cdot \nabla v^e \, \dd y + \int_\Gamma I_m v^e \, \dd S \\[2mm]
        \int_\Gamma \mathcal{M}_j^c j_m \, \dd S - \int_\Gamma \mathcal{M}_j^e j_m \, \dd S
    \end{pmatrix}, \quad
    L(v) = \begin{pmatrix}
                - \int_{Y^c} \sigma^c e_j \cdot \nabla v^c \, \dd y \\[2mm]
                - \int_{Y^e} \sigma^e e_j \cdot \nabla v^e \, \dd y \\[2mm]
                \int_\Gamma V j_m \, \dd S
    \end{pmatrix}.
\end{align}
Inspired by the code used in \cite{kuchta2021solving} for solving a transmission problem with continuous flux and a given jump, 
we  solve \eqref{eq: var_form_M} implementing \eqref{eq: a_mat L_mat L_mat} in block form using the PETSc solver to obtain $u = (\mathcal{M}_j^c, \mathcal{M}_j^e,I_m )$. 
Using $\mathcal{M}_j^c$ and $\mathcal{M}_j^e$, $A_{ij}$ is computed from \eqref{def:A}.

\subsubsection{Computation of $\mathcal{B}(t)$}

Following Algorithm \ref{alg:complete_solution_between_plates}, in order to compute the time-dependent effective coefficient $\mathcal{B}(t)$ in \eqref{eq:sigma_eff_def}, we need to solve the nonlinear coupled cell problem \eqref{eq:chi^0-gamma-1} to obtain $\chi^0$ for each time point $t_k$ and each discrete $\tau_m \in [0,t_k]$ to be used in the numerical integration in \eqref{eq:sigma_eff_def}. 

To this end, we rewrite \eqref{eq:chi^0-gamma-1} using the definition of the jump
\begin{align}
    &\diy{\sigma  \nabla_y \chi_j^0(t,\tau,y) }  = 0, \, & y \in \,\, &Y^c\cup Y^e,  \nonumber\\
    & \sigma^c  \nabla_y \chi_j^{0c}(t,\tau,y)\cdot \n = \sigma^e  \nabla_y \chi_j^{0e}(t,\tau,y)\cdot \n  =: I_m(t,\tau,y)  , \, & y \in \,\, &\Gamma, \label{eq:chi_problem_Im}\\
    & J(y,t,\tau) := [\chi_j^0](t,\tau,y) = \chi_j^{0c}(t,\tau,y) - \chi_j^{0e}(t,\tau,y),\, & y \in \,\, &\Gamma.\nonumber \\
    & c_m \partial_t[\chi_j^0](t,\tau,y) + S_m(X_0(t,\tau,y))[\chi_j^0](t,\tau,y) = -I_m(t,\tau,y). \, & y \in \,\, &\Gamma, \nonumber
\end{align}
Note that the first three equations define a mapping of the jump $J$ to the boundary flux $I_m$ for each fixed $t$ and $\tau$, corresponding to the operator $\mathcal{L}_\ve$ in \eqref{def:L_eps}.
The variational formulation for these three equations is obtained analogously to \eqref{eq: var_form_M}, yielding the matrix problem
\begin{align*}
     a_{\chi}(u,v) = L_{\chi}(v),
\end{align*}
 where for $u = (\chi_j^{0c}, \chi_j^{0e},I_m )$ and $v= (v^c, v^e, j_m)$,
\begin{align*}
    a_{\chi}(u,v) = \begin{pmatrix}
        \int_{Y^c} \sigma^c \nabla_y \chi_j^{0c} \cdot \nabla v^c \, \dd y -\int_\Gamma I_m v^c \, \dd S \\[2mm]
        \int_{Y^e} \sigma^e \nabla_y \chi_j^{0e} \cdot \nabla v^e \, \dd y + \int_\Gamma I_m v^e \, \dd S \\[2mm]
        \int_\Gamma \chi_j^{0c} j_m \, \dd S - \int_\Gamma \chi_j^{0e} j_m \, \dd S
\end{pmatrix}, \quad
L_{\chi}(v) = \begin{pmatrix}
        0 \\[2mm]
         0 \\[2mm]
        \int_\Gamma J j_m \, \dd S
        \end{pmatrix}
\end{align*}
The code to solve the above cell problem is analogous to the code to solve \eqref{eq:cell-Matrix_problem_M}-\eqref{eq: a_mat L_mat L_mat} for the cell problem for $\mathcal{M}$. Note that we are solving this linear system of equations for each $t_k$ and $\tau_m \in [0,t_k]$, therefore repeatedly solving the same problem up to $J$ on the right-hand-side.  Since the matrix $a_{\chi}(u,v)$ is not changing, we create the assembled block matrix before starting the for loops over time and use LU decomposition to speed up the solver.\\
The solver gives us $I_m$, which we can use to solve the ODE we obtain by the fourth equation in \eqref{eq:chi_problem_Im}
\begin{align*}
    \partial_t[\chi_j^0](t,\tau,y) = - \frac{1}{c_m} \Big(I_m(t,\tau,y) + S_m(X_0(t,y))[\chi_j^0](t,\tau,y) \Big).
\end{align*}
To solve the above ODE we use a semi-implicit Euler method and compute 
\begin{align*}
    [\chi_j^0](t_{k+1},\tau,y_s) = \frac{1}{(1+ \frac{\Delta t}{c_m}S_m(X_0(t_k,y_s))} \Big( [\chi_j^0](t_{k},\tau,y_s) - \frac{\Delta t}{c_m} I_m(t_k,\tau,y_s) \Big)
\end{align*}
for $\Delta t = t_{k+1}- t_k$ and each mesh point $y_s \in \Gamma$.
Similarly, we solve the ODE for $X_0$ using a semi-implicit Euler method for time step size $\Delta t = t_{k+1}- t_k$: 
\begin{align*}
    X_0(t_{k+1},y_s) = \frac{X_0(t_k,y_s) + \Delta t \, \tau_{\text{max}} \beta([u_0](t_k,y_s))}{(1+ \Delta t \,\tau_{\text{max}})},
\end{align*}
with
\begin{align*} \tau_{\text{max}} =
\begin{cases}
\tau_{\text{ep}} \,\, \mbox{if} \,\, \beta([u_0](t_k,y_s)) \geq X_0(t_k,y_s),\\ 
\tau_{\text{res}} \,\, \mbox{if} \,\, \beta([u_0](t_k,y_s)) < X_0(t_k,y_s),
\end{cases}
\end{align*}
for each mesh point $y_s \in \Gamma$. With $\chi_j^{0c}$ and $\chi_j^{0e}$ for all times we can compute $B_{ij}$ by \eqref{def:B} and therefore also $[u_0]$, the jump of $u_0$, by \eqref{eq:w0_jump_between_plates} and the effective conductivity $\sigma_{\text{eff}}$ by \eqref{eq:sigma_eff_def}.

\subsection{Numerical results}
\label{sec:numerical_results}
The simulations were performed using the FEM package FEniCSx 0.9 \cite{BarattaEtal2023,AlnaesEtal2014}. We use gmsh (version 4.14.1) to generate the mesh. 
We perform discretization using FEM on the periodic cell $Y=Y^c \cup \Gamma \cup Y^e$ containing an intra- and an extracellular domain, where $Y^c$ is a disc of radius $r=0.25$ centred at $(0.5,0.5)$, and the periodic cell is $Y=(0,1)^2$. To implement the periodic boundary conditions, we use the Multi Point Constraint package \verb|dolfinx_mpc| in FEniCSx 0.9. 


For the simulations we follow \cite{kavian2014classical} and choose 
\begin{align*}
    \beta(\xi) = \frac{1+\tanh(k_{\text{ep}}(|\xi | - V_{\text{rev}}))}{2}, \quad k_{\text{ep}} = 40 \, \mbox{V}^{-1}, \quad V_{\text{rev}} = 1.5 \mbox{V}.
\end{align*}
To compare our numerical results with the ones in \cite{kavian2014classical}, we translate the values of the boundary data $g/\ve$ into the strength of the electric field $|E|$ measured in V/cm.  To this end, we choose a typical size of the periodicity cell to be $\ell=2\cdot 10^{-4}$ m and the size of a sample to be $L_0=10^{-2}$ m.
\begin{align*}
|E|=\frac{g}{\ve L_0}=\frac{g}{\ell}=\frac{g\cdot 10^4}{2} \, \mbox{V/m} = \frac{g\cdot 10^2}{2}\, \mbox{V/cm}.
\end{align*}
The values of the electric field for different values of $g$ is given in Table \ref{table:g vs E}.
\begin{table}[htp]
\centering
\begin{tabular}{c|c|c|c|c|c|c|c|c}
        $g$ & 0 & 2 & 4& 6 & 10 & 50 & 75 & 100  \\ \hline
        $|E|$ \mbox{V/cm} & 0 & 100& 200& 300& 500 &  2500 & 3750 & 5000
\end{tabular}
\caption{Relation between the applied pulse and the electric field.}
\label{table:g vs E}
\end{table}

In Figure \ref{fig:B} we present the simulations of $B(t,\tau)$ given by  \eqref{def:B}. Since the matrix $B$ is symmetric, as shown in Proposition~\ref{prop: B_sym}, it is enough to compute $B_{11}$. Figure~\ref{fig:Bttau} depicts $B_{11}$ over time $t$ and $\tau$ (see the computational domain in time Figure~\ref{fig:2}) for pulse magnitude $|E| = 500 \, V/cm$. Note that $B_{11}$ converges to zero as $t$ grows, but since \eqref{eq:chi^0-gamma-1} is coupled with the equation for $X_0$, the rate depends on the value of $X_0$ at time $t$, and thus it depends on the applied electric field $E$, as seen in Figure \ref{fig:Bt0}. For fixed $\tau = 0$, we compare in Figure~\ref{fig:Bt0} $B_{11}$ over $t$ for different pulse magnitudes $|E| = 500 \, V/cm$ to $5000 \, V/cm$. We observe that the higher the pulse magnitude, the faster $B_{11}$ tends to zero.
\begin{figure}[H]
    \centering
    \begin{subfigure}{0.49\textwidth}
        \centering
        \includegraphics[width=\linewidth]{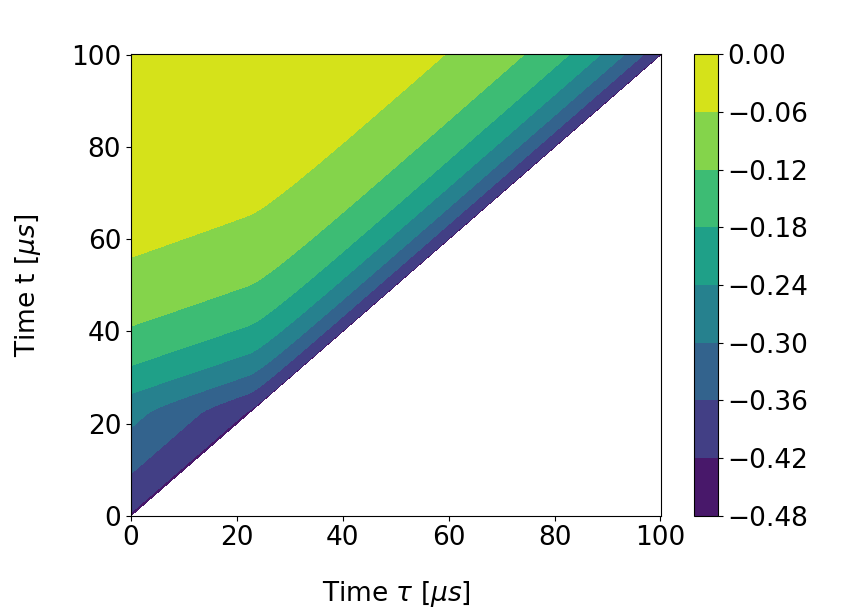}
        \caption{Coefficient $B_{11}$ over time $t$ and $\tau$, $|E| = 500$ V/cm.}
        \label{fig:Bttau}
    \end{subfigure}
    \hfill
    \begin{subfigure}{0.49\textwidth}
        \centering
        \includegraphics[width=\linewidth]{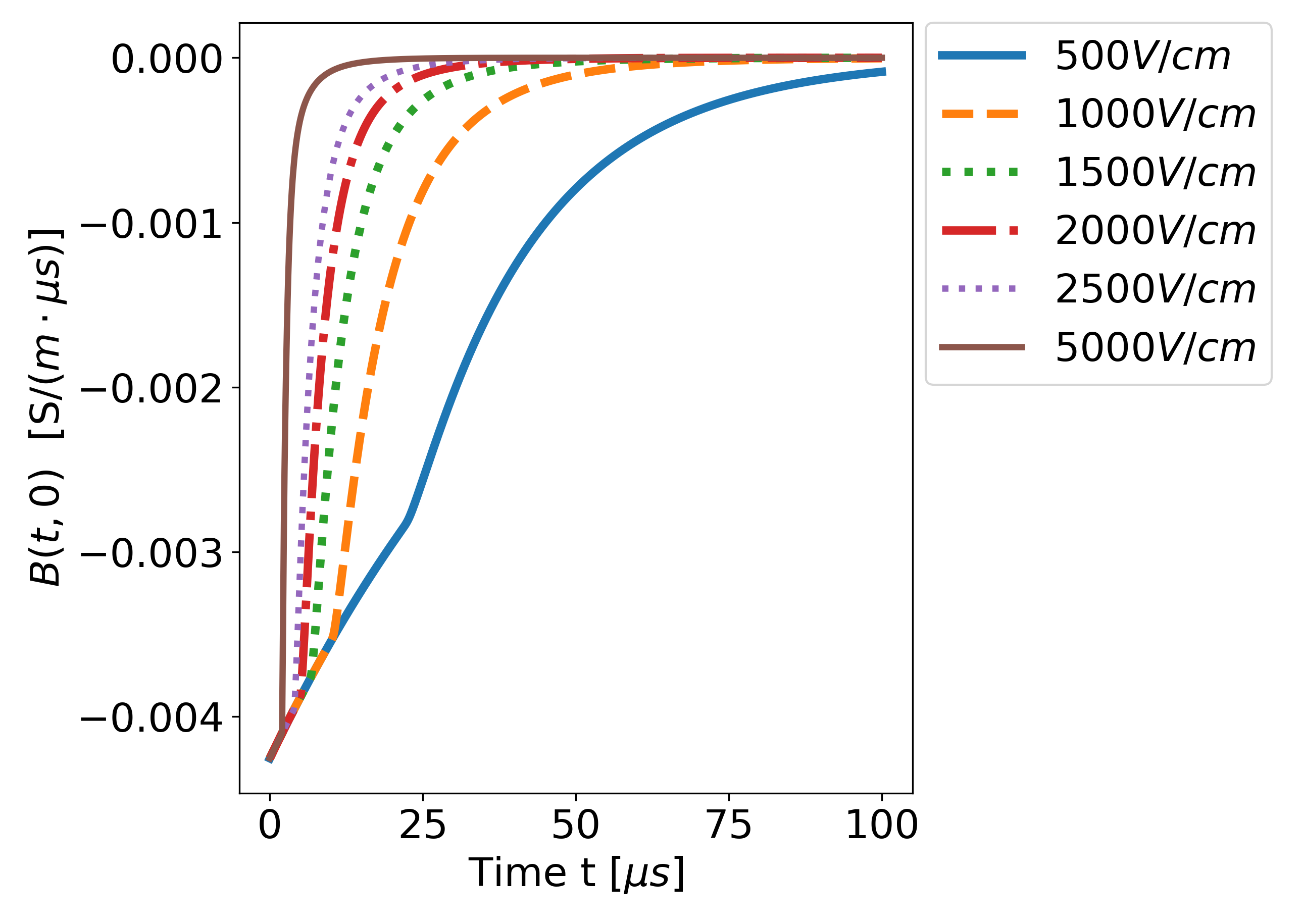}
        \caption{$B_{11}$ over time $t$ for $\tau = 0$ and different $|E|$.}
        \label{fig:Bt0}
    \end{subfigure}
    
    \caption{Simulations of $B$ given by equation \eqref{def:B}.}
    \label{fig:B}
\end{figure}

Figure~\ref{fig:effective_conductivity_over_time} depicts the effective conductivity $\sigma_{\text{eff}}$ given by \eqref{eq:sigma_eff_def} over time $t$ for different pulse magnitudes $|E| = 0 \, V/cm$ to $5000 \, V/cm$. The magnified section on the right focuses on the conductivity for the electric field $|E|$ between $1500 \, V/cm$ and $5000 \, V/cm$. The macroscopic model captures the non-monotone evolution of the effective conductivity, which agrees with experimental data \cite{ivorra2009electric}. 
\begin{figure}[H]
\centering
\includegraphics[width=\linewidth]{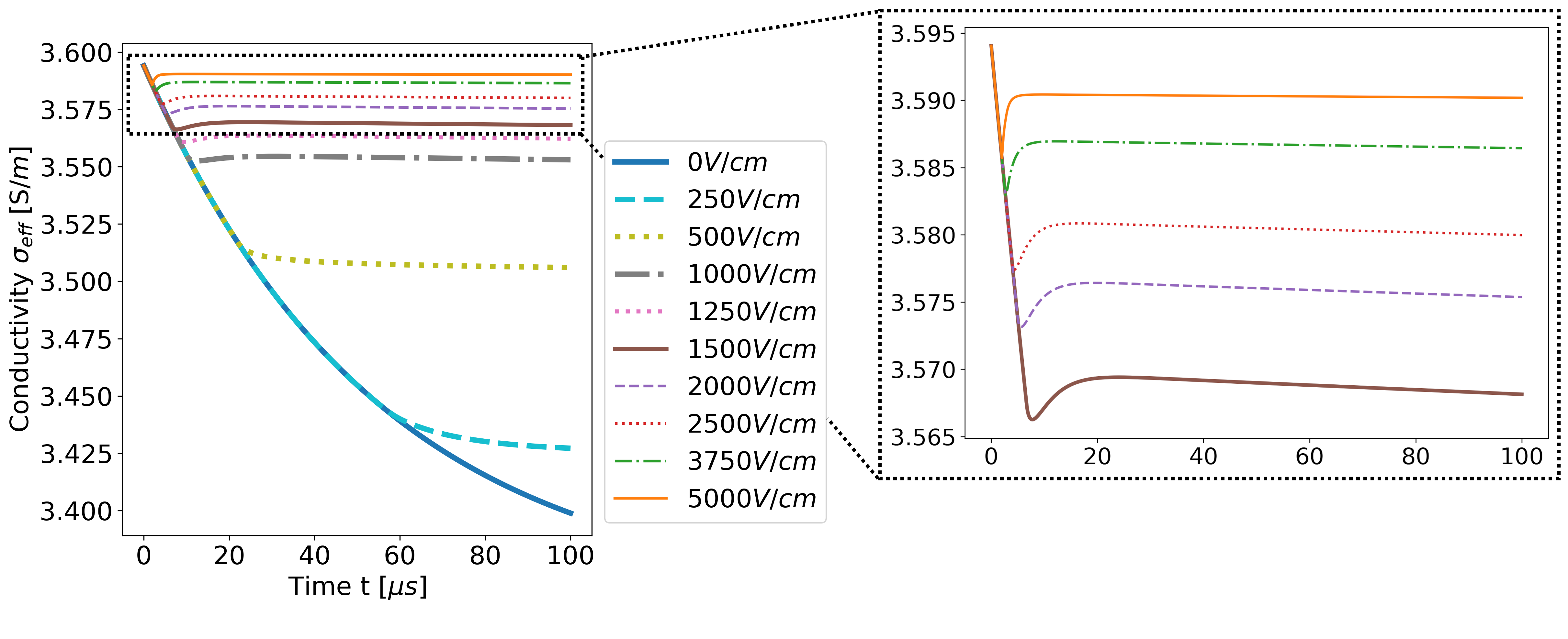}  
\caption{Evolution of the effective conductivity $\sigma_{\text{eff}}$ for different pulse magnitudes.}
\label{fig:effective_conductivity_over_time}
\end{figure}
Namely, observe a characteristic drop in conductivity in Figure \ref{fig:effective_conductivity_over_time}. The membrane is a thin lipid bilayer that separates the inside of the cell from the outside. It is nonconductive, but it is surrounded by conductive fluids (cytoplasm inside, extracellular fluid outside).
{The membrane charges up like a capacitor, temporarily storing energy.} During this charging phase, ions are pulled toward the membrane, reducing their movement in the bulk solution.
This causes a temporary drop in measured conductivity.
Once the membrane is sufficiently charged, 
some defects form allowing ions to pass through the membrane — and conductivity increases sharply. At the same time, this reduces the membrane's resistance, allowing current to flow more easily. As a result, the voltage across the membrane drops, because the membrane is no longer acting as a perfect insulator.

The value for $\sigma_{\eff}$ at $t=0$ corresponds to a conducting membrane with zero transmembrane potential, and is equal to $A$ defined in \eqref{def:A} given in terms of the solution $\mathcal{M}$ to the cell problem $\eqref{eq:cell-M}$ (which is independent of time). Additionally, when $g=0$ and no electroporation occurs, the limiting value of $\sigma_{\eff}$ as $t\rightarrow \infty$ corresponds to the case when the membrane is fully insulating and no current flows through the cell. This situation is equivalent to setting $\sigma^c=0$ in  \eqref{eq:cell-M}. Both these limiting cases are classical problems, and the values have been computed semi-analytically for periodic arrays of cylinders to high accuracy by Perrins et al \cite{perrins1979transport}. Inserting our parameter values yields $\sigma_{\eff}\big|_{t=0} = 3.5941$ and $\sigma_{\eff} = 3.3587$ as $t\to \infty$ (the dashed line in Figure \ref{fig:sigma_eff_g0}), respectively for the two cases, which is in agreement within numerical errors with our results, as seen in Figures \ref{fig:effective_conductivity_over_time}, \ref{fig:sigma_eff_g0}, and \ref{fig:sigma_eff_over_g}.
\begin{figure}[H]
\centering
\includegraphics[width=0.5\linewidth]{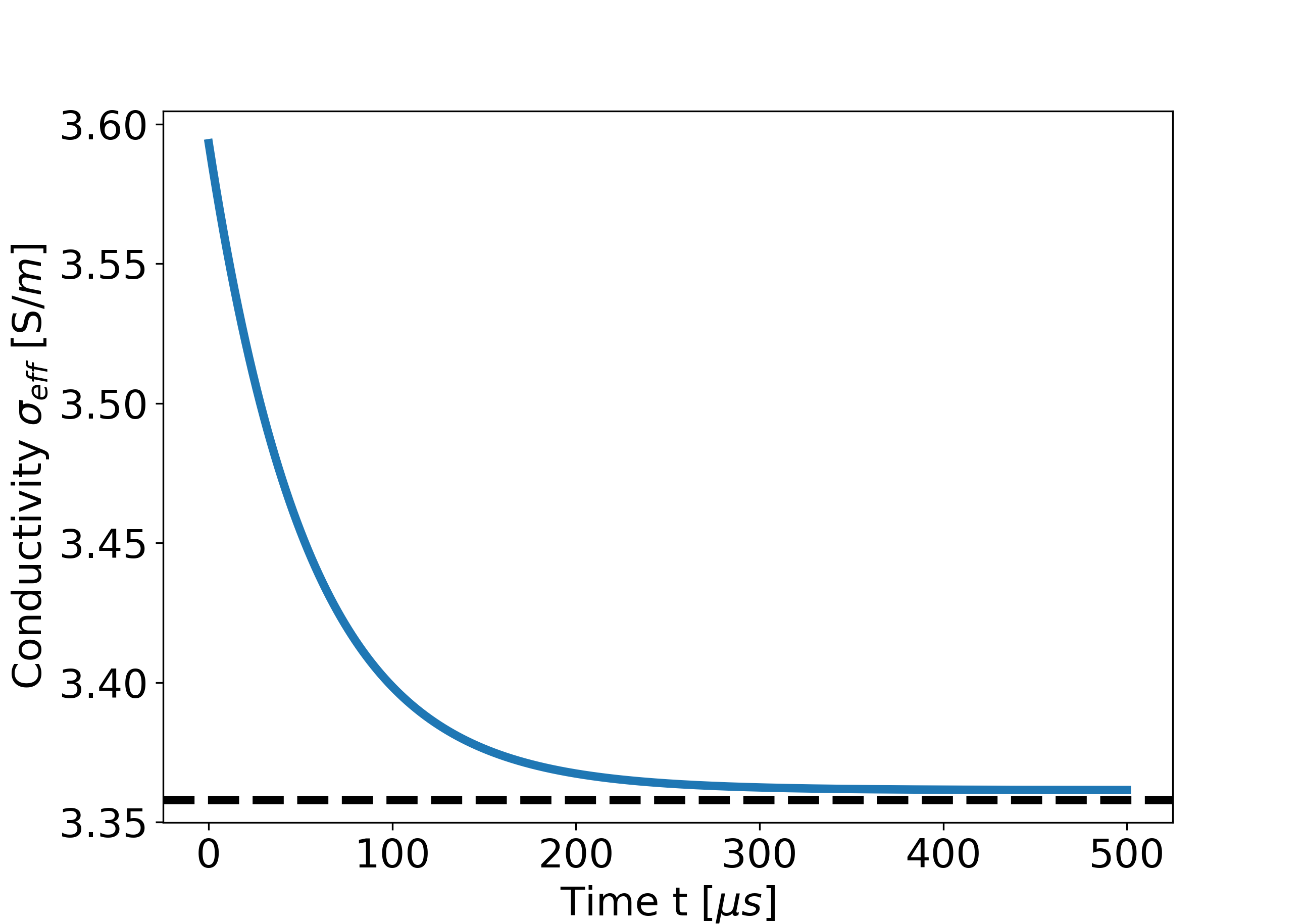}
\caption{Evolution of the effective conductivity $\sigma_{\text{eff}}$ for $|E| = 0 \, V/cm$.}
\label{fig:sigma_eff_g0}
\end{figure}
Next, we illustrate the dependence of the effective conductivity on the applied electric field. 
If we compare the values of the conductivity $\sigma_\eff$ at the final time in Figure~\ref{fig:effective_conductivity_over_time}, we observe that the value increases when increasing the pulse magnitude $|E|$. In Figure~\ref{fig:sigma_eff_over_g}, we plot the effective conductivity as a function of the pulse magnitude $|E|$. The end times are given in Table~\ref{tab:end_time}. The obtained nonlinear dependence in the form of a sigmoid function agrees with the experimental results \cite{ivorra2009electric}, \cite{corovic2013modeling}. We note that up to the pulse magnitude of about $160$ V/cm the value of $\sigma_\eff$ is almost constant, and then it increases drastically to later flatten for pulse magnitudes over $1000$ V/cm. 
\begin{figure}[H]
    \centering
\includegraphics[width=\linewidth]{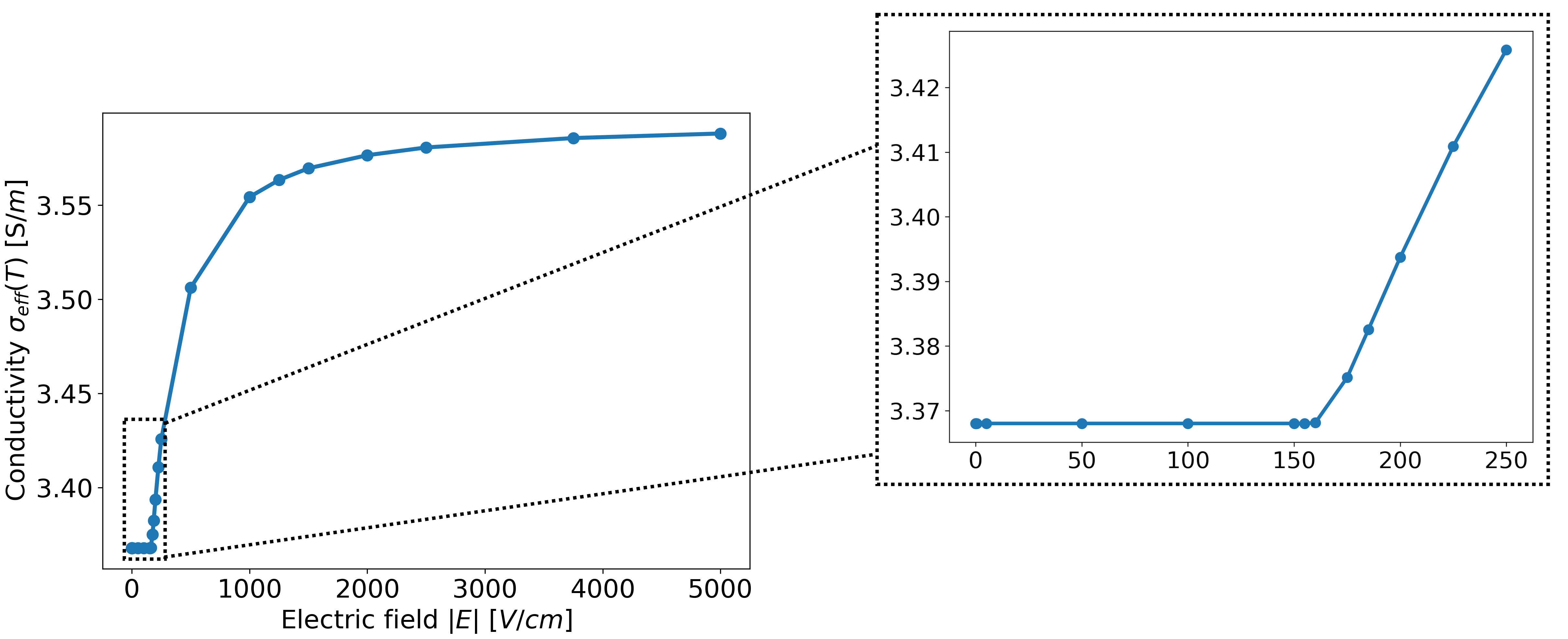}
    \caption{Dependence of $\sigma_{\eff}(T)$ on $|E|$.}
    \label{fig:sigma_eff_over_g}
\end{figure}
Note that even if the microscopic conductivity $\sigma^{c, e}$ is constant, the macroscopic one depends both on time and the magnitude of the applied electric field, due to the effects of electroporation. 

In Figure \ref{fig:convergence} we illustrate the convergence of the algorithm for $\sigma_\eff$. 
To compute the convergence rate, for each mesh size $m$ the absolute value of the difference in effective conductivity computed using mesh size $m$ and a refined one with mesh size $m/2$. The slope of the line thus achieved in a log-log plot is then compared with the reference slopes $1$ and $2$. We plot the error differences for the final time $T=$100 $\mu$s with the fixed time step $0.1$ $\mu$s in a log-log plot  Figure~\ref{fig:conv_mesh}, which shows the expected second-order convergence rate. As for the time discretization, we use the semi-implicit Euler method, which implies the convergence of order 1. The error differences comparing $\sigma_\eff$ for time step $\Delta t$ with time step $\Delta t/1.5$ for the final time $T=$100 $\mu$s with the fixed mesh size 0.01 are plotted in Figure~\ref{fig:conv_time}, and the convergence is in agreement with theory.
\begin{figure}[htbp]
    \centering
    \begin{subfigure}{0.49\textwidth}
        \centering
        \includegraphics[width=\linewidth]{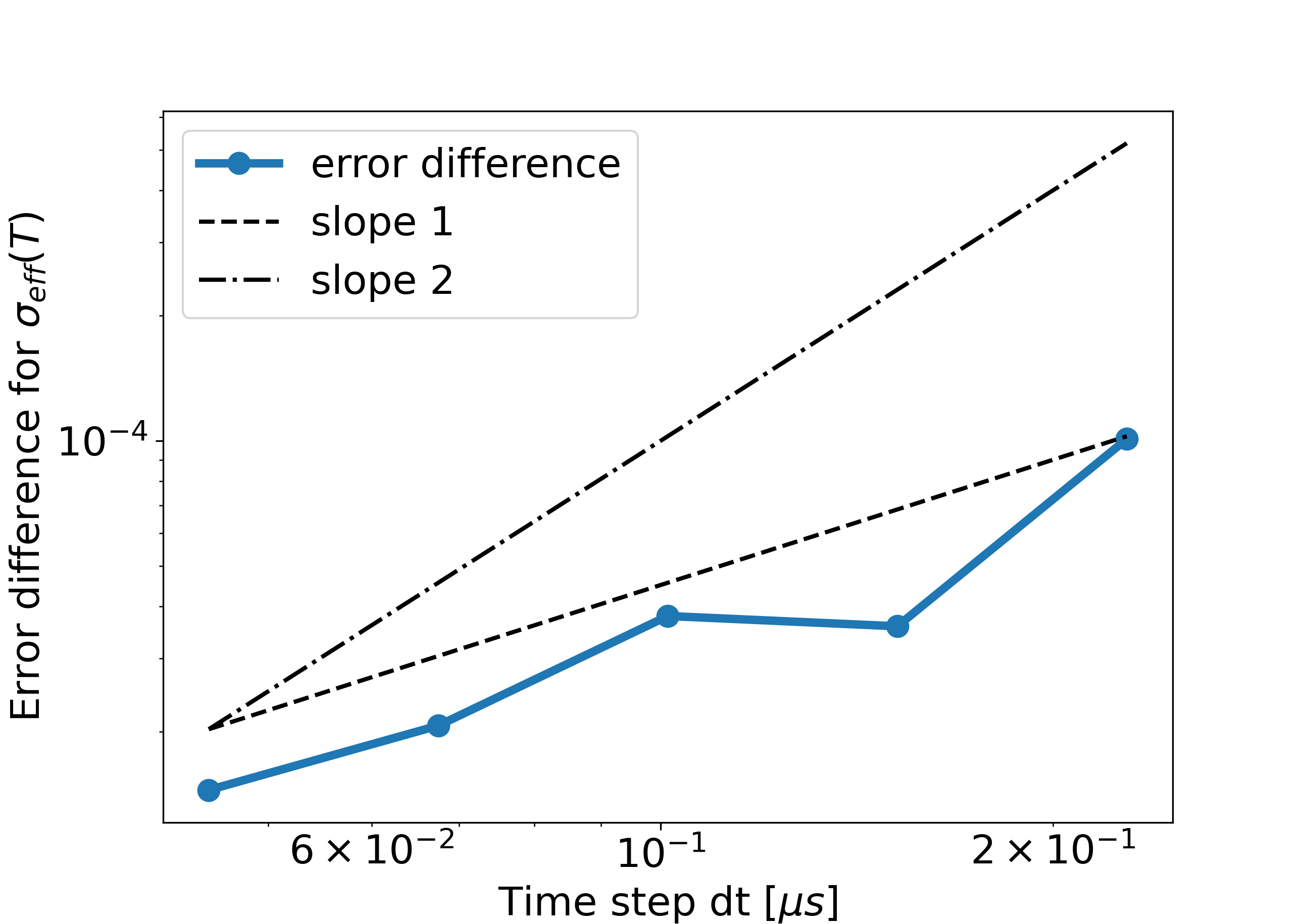}
        \caption{Convergence for $\sigma_{\eff}(T)$ as a function of time step. }
        \label{fig:conv_time}
    \end{subfigure}
    \hfill
    \begin{subfigure}{0.49\textwidth}
        \centering
        \includegraphics[width=\linewidth]{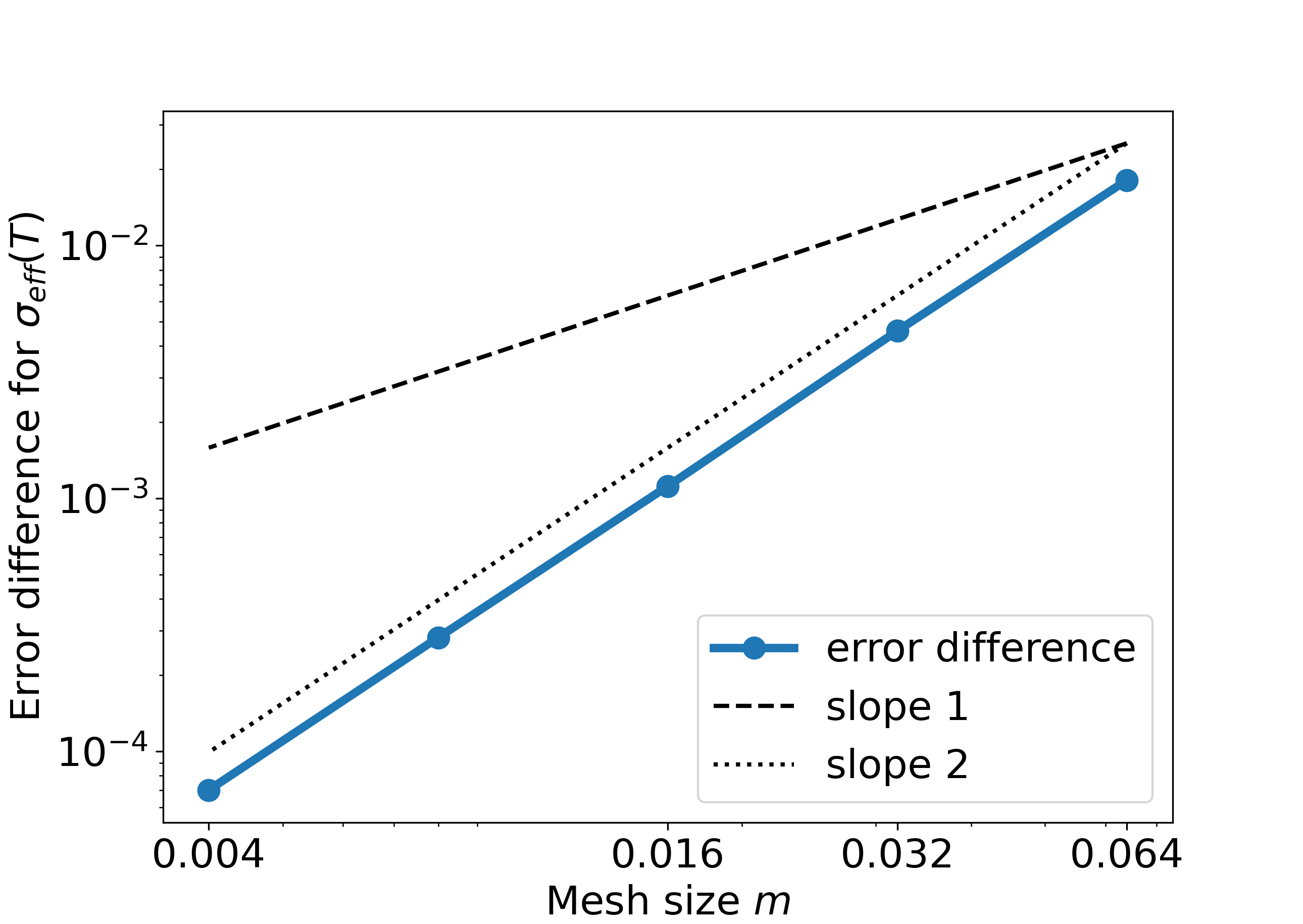}
        \caption{Convergence for $\sigma_{\eff}(T)$ as a function of mesh size.} \label{fig:conv_mesh}
    \end{subfigure}
    \caption{Numerical convergence for $\sigma_\eff$.}
    \label{fig:convergence}
\end{figure}

In Figure \ref{fig:Sm_over_g} and \ref{fig:Sm_over_time} we plot the average membrane conductivity
\begin{align*}
        \overline{S_m} = \frac{1}{|\Gamma|} \int_\Gamma (S_L + (S_{\text{ir}} - S_L) X_0(t,y)) \, \dd y.
\end{align*}
as a function of applied electric field and a function of time.  
In Figure~\ref{fig:Sm_over_time} we compare the conductivity for different pulse magnitudes $|E| = 0 \, V/cm$ to $5000 \, V/cm$. One can see the qualitative agreement with the average conductivity for a single cell in \cite{kavian2014classical}. 
In Figure~\ref{fig:Sm_over_g} we plot the conductivity over the pulse magnitude $|E|$, where we use the end times presented in Table~\ref{tab:end_time}, since the steady state is reached at that time.
\begin{figure}[H]
    \centering
    \begin{subfigure}{0.49\textwidth}
        \centering
        \includegraphics[width=\linewidth]{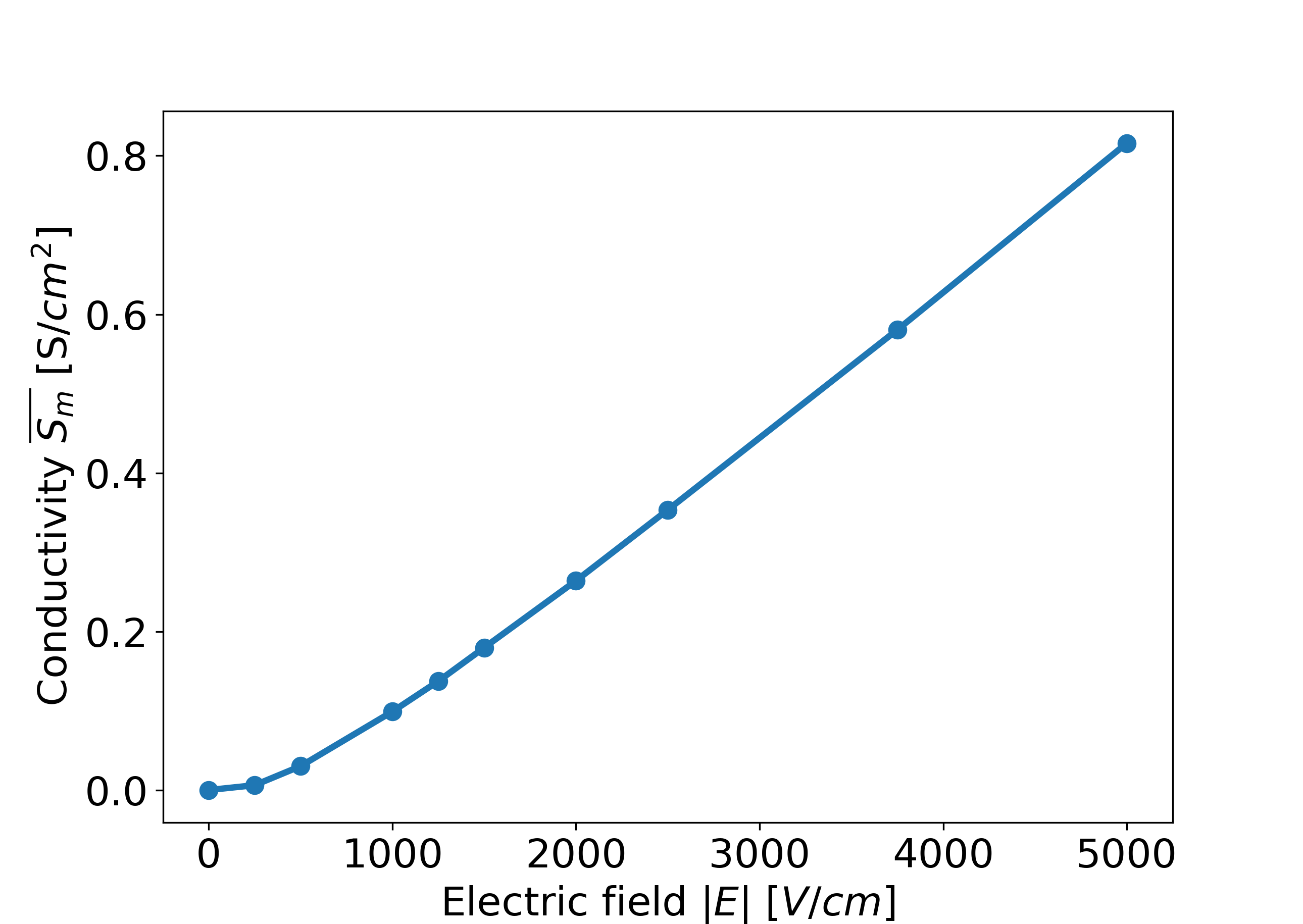}
        \caption{The averaged membrane conductivity $\overline{S_m}(T)$ over pulse magnitudes.}
        \label{fig:Sm_over_g}
    \end{subfigure}
    \hfill
    \begin{subfigure}{0.49\textwidth}
        \centering
        \includegraphics[width=\linewidth]{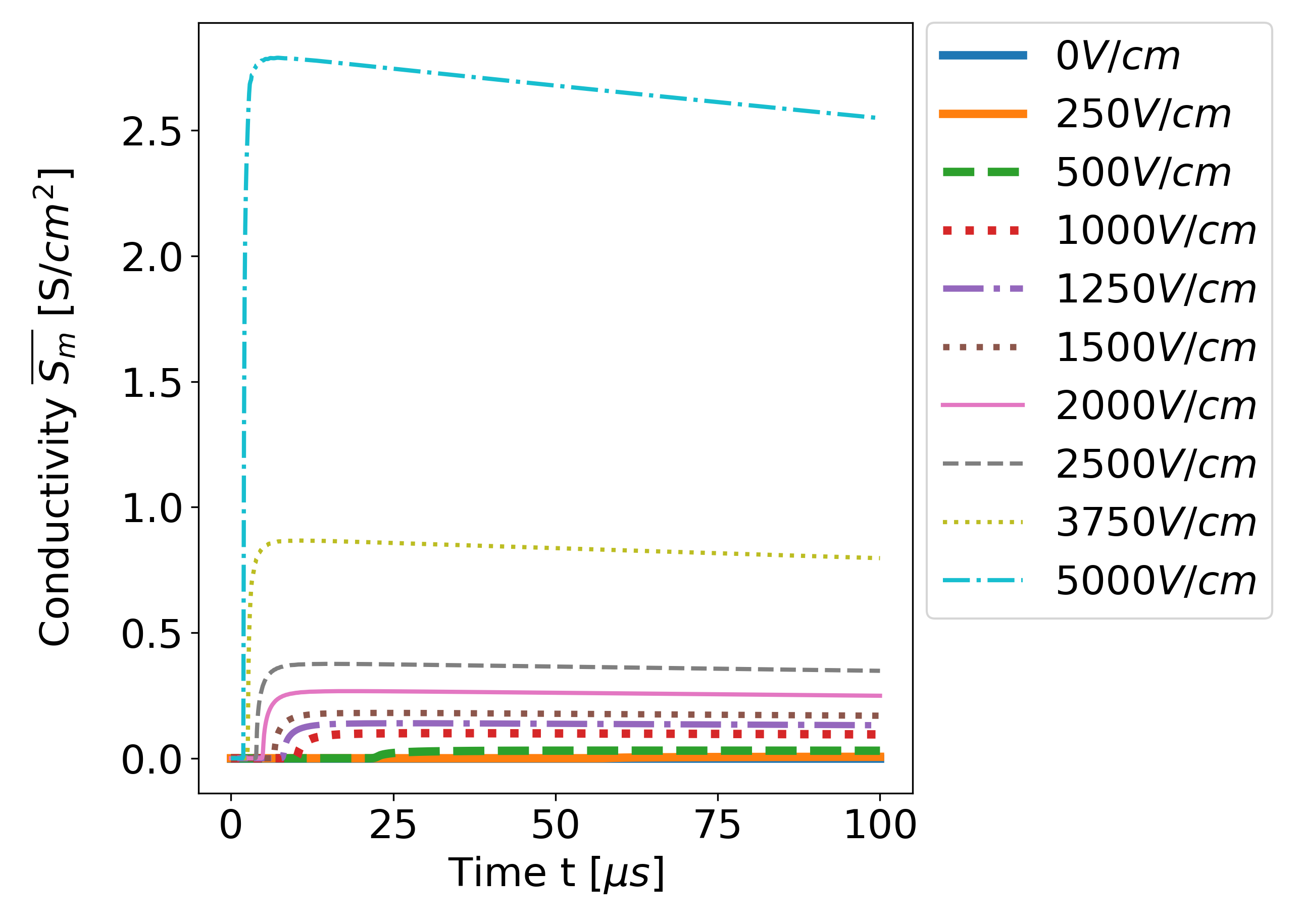}
        \caption{Evolution of the average membrane conductivity $\overline{S_m}$.}
        \label{fig:Sm_over_time}
    \end{subfigure}
    
    \caption{Average membrane conductivity $\overline{S_m}$ for $600$ time steps per pulse magnitude and end time according to Table~\ref{tab:end_time} and mesh size $0.01$.}
\end{figure}
\begin{table}[]
    \centering
    \begin{tabular}{c|c|c|c|c|c|c|c|c|c|c}
        $|E|$ & 0 & 250 & 500 & 1000 & 1250 & 1500 & 2000 & 2500 & 3750 & 5000  \\ \hline
        $T$ & 200 & 200 & 100 & 50 & 50 & 30 & 30 & 20 & 20 & 10 
    \end{tabular}
    \caption{End times of the simulations for different pulse magnitudes}
    \label{tab:end_time}
\end{table}

In Figure \ref{fig:potential}, we compare the behaviour of the membrane potential with the one for a single cell in \cite{kavian2014classical}. In Figure \ref{fig:v_fix_angle}, we plot the approximation $[u_0]$ of the jump $[u_\ve]$ on the membrane as a function of time for a fixed angle, being the cell's pole, and for pulse magnitude $|E| = 500$ V/cm. The plot qualitatively agrees with Figure 8(a) in \cite{kavian2014classical}. At the final time $T=100 \, \mu$s we compute $[u_0]$ along the interface $\Gamma$  for $|E| = 500 $ V/cm (see Figure~\ref{fig:v_fix_time}). Similar to Figure 8(b) in \cite{kavian2014classical} we obtain a smooth, bipolar variation of the transmembrane potential along the cell perimeter under an applied electric field. One side of the cell, facing the field, exhibits a pronounced positive peak, while the opposite side displays a symmetric negative peak of similar magnitude. Between these extremes, the curve transitions gradually through zero near the equator, creating a shape that resembles a distorted cosine wave due to nonlinear effects in the model.
\begin{figure}[htbp]
    \centering
    \begin{subfigure}{0.49\textwidth}
        \centering
        \includegraphics[width=\linewidth]{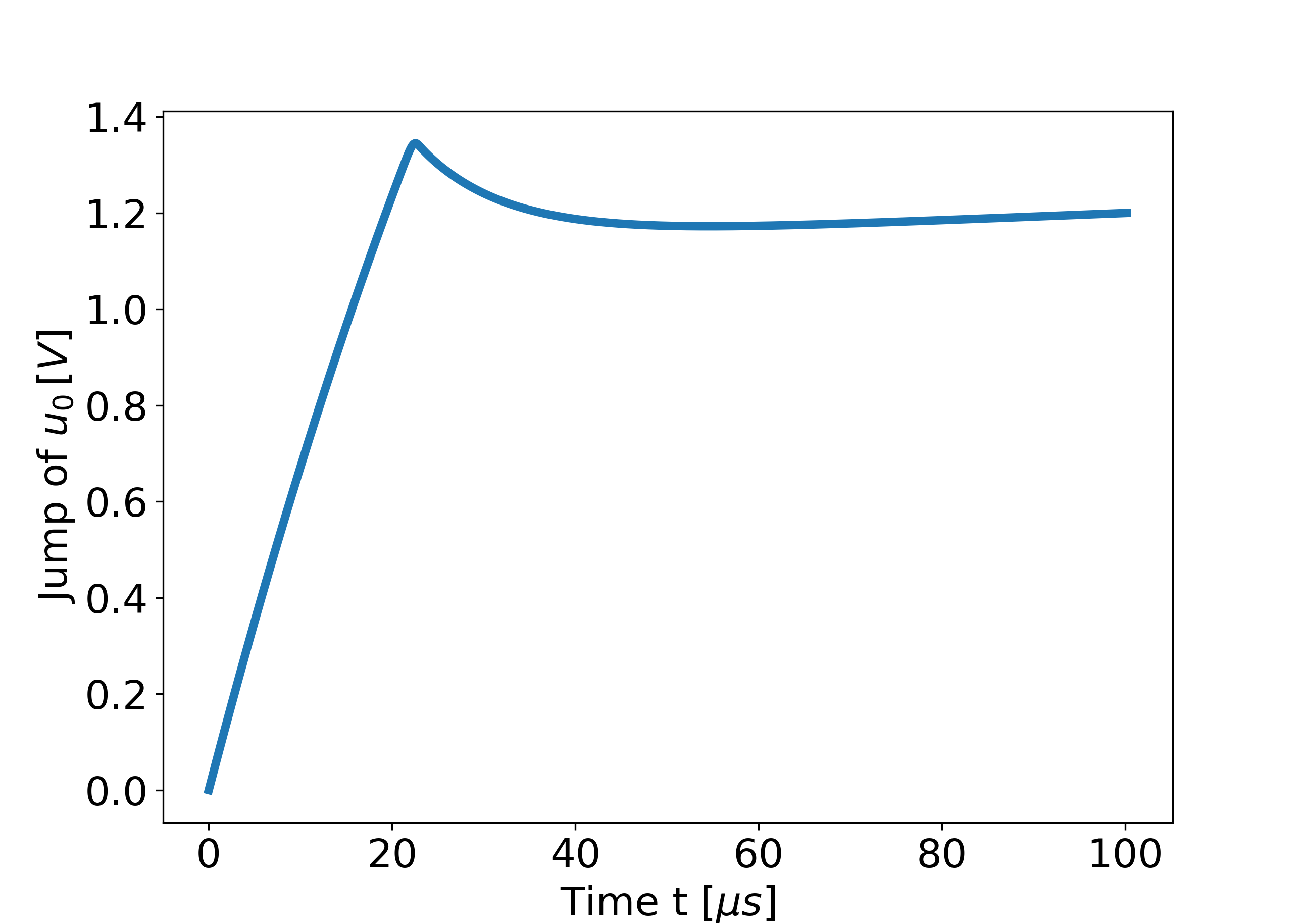}
        \caption{Evolution of $[u_0]$ at the cell's pole, $|E| = 500$ V/cm.}
        \label{fig:v_fix_angle}
    \end{subfigure}
    \hfill
    \begin{subfigure}{0.49\textwidth}
        \centering
        \includegraphics[width=\linewidth]{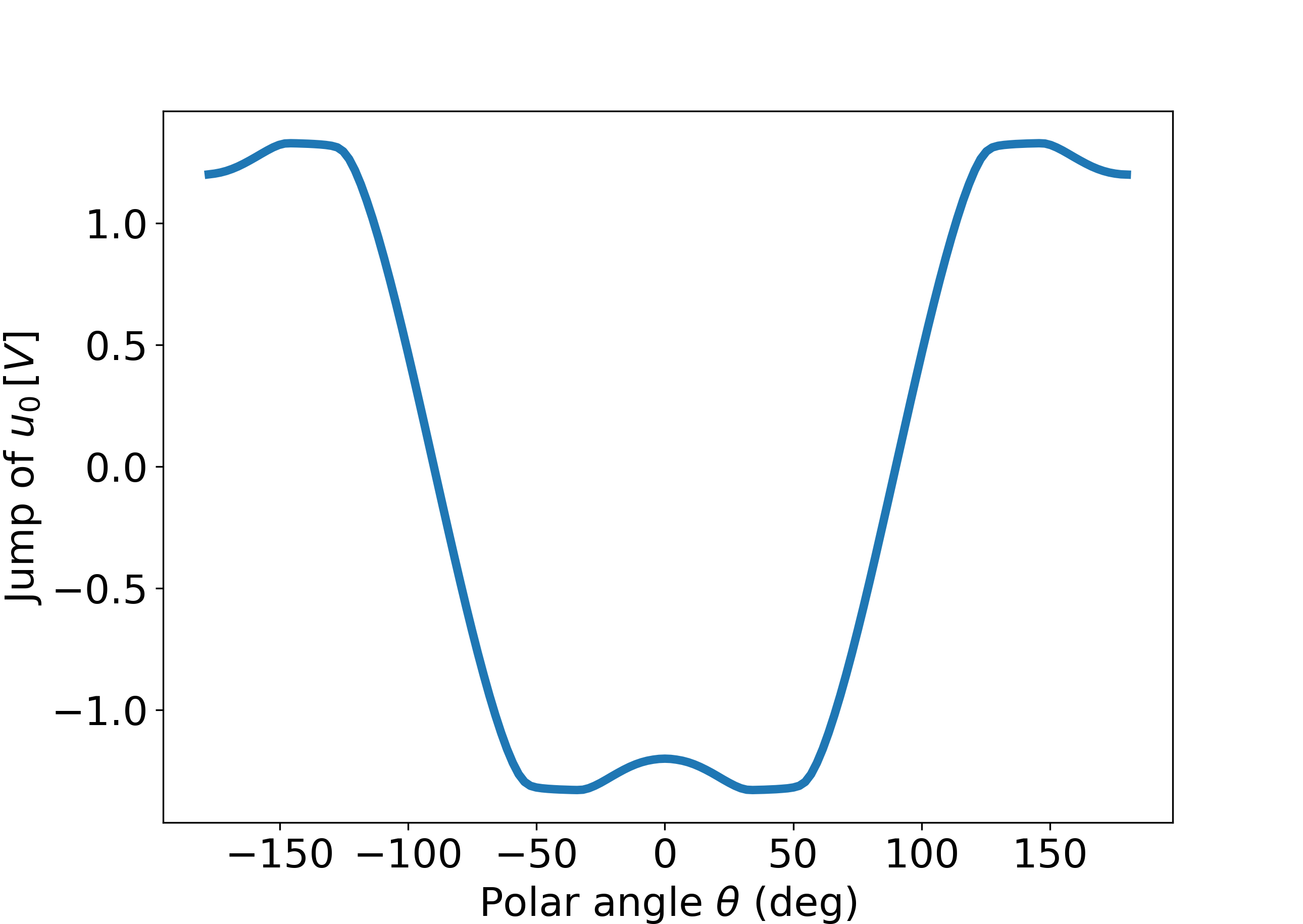}
        \caption{$[u_0]$ after $100 \mu s$ along the boundary of the cell.}
        \label{fig:v_fix_time}
    \end{subfigure}
    \caption{Simulations of the approximation $[u_0]$ of the membrane potential $[u_\ve]\big|_{\Gamma_\ve}$ for time step $1/6$ and mesh size $0.01$. }
    \label{fig:potential}
\end{figure}

\section*{Conclusions}
In this work, we perform the multiscale analysis of a phenomenological electropermeabilization model introduced in \cite{kavian2014classical} for a single cell. The electric potential satisfies electrostatic equations in the extra- and intracellular domains, while the jump across the cell membrane evolves according to a nonlinear law coupled with an ordinary differential equation for the porosity degree. The nonlinear term involving surface conductivity is neither monotone nor Lipschitz, which makes the analysis nontrivial. We establish existence and uniqueness of a global solution via Galerkin approximations, exploiting a one-sided Lipschitz property (a generalized monotonicity condition) along the solutions. A dimensional analysis reveals a specific scaling in the microscopic problem with respect to a small parameter $\ve$, representing the ratio between the typical cell size and the tissue sample size. We derive uniform (in $\ve$) a priori estimates, obtain formal asymptotics for the electric potential, and rigorously justify the expansion using two-scale convergence. The main analytical result is a macroscopic electropermeabilization model with memory effects, where the effective current is nonlinear in the electric field and time-dependent. The macroscopic model captures the nontrivial evolution of effective conductivity, including the characteristic drop that reflects the capacitive behavior of the lipid bilayer, in agreement with experimental data. We compute numerically the effective electric potential, surface conductivity, and tissue conductivity. Although the microscopic conductivity is constant, experimental observations \cite{ivorra2009electric, corovic2013modeling} show that tissue conductivity varies with the electric field. Our principal contribution is a rigorous mathematical explanation - based on the phenomenological model of \cite{kavian2014classical} - for the nonlinear, sigmoid-shaped dependence of tissue conductivity on electric field strength and its evolution over time, which qualitatively agrees with experimental results reported in the literature.

\appendix
\section{Well-posedness of the microscopic problem}
\label{sec:existence}
In this section we prove the existence and uniqueness of a solution to a slightly more general problem than \eqref{eq:orig-prob} by Galerkin approximations. Since $\ve$ is fixed, it is enough to prove the well-posedness for a single cell.

Note that a reduction to an evolution equation on a manifold was used in \cite{lions1969quelques} (Chapter 1, Section 11) for proving an existence result by Galerkin approximations for nonlinear monotone parabolic and hyperbolic problems. Our situation is, however, different: the nonlinear function $(S_L + S_{\rm ir} w) v$ is neither one-sided Lipschitz nor monotone, but becomes such along the solutions satisfying. Namely, for a local in time solution, we will prove a uniform $L^\infty$-bound (see Lemma \ref{lm:bound-w}) which guarantees the Lipschitz property of the nonlinearity in the finite-dimensional case. In the infinite-dimensional case, when proving the uniqueness of a solution, the different norms are not equivalent any more, which results in a generalized monotonicity property \eqref{eq:est_generalized_monotonicity_existence}, similar to one considered in \cite{liu2011existence}. Note carefully a factor depending on $\|v_2\|_{L^4(\Gamma)}$. Since this factor is finite for a solution of our problem, we can use the Grönwall inequality and conclude the uniqueness.

Throughout this section, we denote $Y$ a bounded Lipschitz domain in $\mathbb R^3$ being a union of the intracellular and extracellular parts, as well as the Lipschitz interface $\Gamma$ between them: $Y=Y^c \cup \Gamma \cup Y^e$. 
We denote the standard scalar product in $L^2(\Gamma)$ by $(\cdot, \cdot):= (\cdot, \cdot)_{L^2(\Gamma)}$ and the dual pairing by $\langle \cdot, \cdot \rangle:= \langle \cdot, \cdot \rangle_{H^{-1/2}(\Gamma), H^{1/2}(\Gamma)}$. We consider the following Cauchy problem on $\Gamma$
\begin{align}
    \label{eq:abstract-prob}
    \begin{split}
        &\partial_t v = \mathcal{L}\, v + g(v,w) + p,\\
        &\partial_t w = f(v,w),\\
        &v(t=0)=v_0, \,\, w(t=0)=w_0.
    \end{split}
\end{align}
The operator $\mathcal{L}$ in \eqref{eq:abstract-prob} corresponds to $\frac{1}{c_m}\mathcal{L}_\ve$, the function $g(v, w)$ to $\frac{1}{c_m}S_m(w)v$, and $p$ to $-\frac{1}{c_m}\sigma^c \nabla p_\ve \cdot n$ in \eqref{eq:prob-on-Gamma_eps}. We summarize the assumptions on these functions below.
\begin{itemize}
    \item[(A1)] $\mathcal{L}:H^{1/2}(\Gamma)\to H^{-1/2}(\Gamma)$ is a linear self-adjoint bounded operator satisfying 
    \begin{itemize}
        \item[(i)]
        $\langle \mathcal{L} v , v\rangle \le -\alpha \|v\|_{H^{1/2}(\Gamma)}^2$ (coercivity).
        \item[(ii)] 
        $\|\mathcal{L} v\|_{H^{-1/2}(\Gamma)} \le C \|v\|_{H^{1/2}(\Gamma)}$ (growth condition).
    \end{itemize}
    \item[(A2)] $g(v,w)=(S_{\rm ir} + S_L w)\, v$ for some positive constants $S_{\rm ir}, S_L$.
    
    \item[(A3)] 
    $f$ satisfies the one-sided Lipschitz property
    \begin{align*}
     (f(v_1,w_1)-f(v_2, w_2), w_1-w_2) \le C (\|v_1-v_2\|_{L^2(\Gamma)}^2 + \|w_1-w_2\|_{L^2(\Gamma)}^2).
    \end{align*}
    
    \item[(A4)] $p\in L^2(0,T; H^{-1/2}(\Gamma))$.
    \item[(A4')]$\partial_t p \in L^2(0,T; H^{-1/2}(\Gamma))$.
    \item[(A5)] $v_0 \in L^2(\Gamma)$, $w_0\in H^{1/2}(\Gamma) \cap L^\infty(\Gamma)$, and $0\le w_0\le 1$.
    \item[(A5')] $v_0\in H^{1/2}(\Gamma)$.
\end{itemize}
The following lemma providing a uniform bound for $w$, resulting from the comparison principle for ordinary differential equations is important for proving the well-posedness of the microscopic problem and the homogenization procedure.
\begin{lemma}
\label{lm:bound-w}
Given $\beta\in C([0,T]; L^\infty(\Gamma))$ such that $0\le \beta \le 1$ and $w_0\in L^\infty(\Gamma)$, there exists a unique solution $w \in C^1(0,T; L^\infty(\Gamma))$ of
\begin{align*}
    &\partial_t w = \max \left(\frac{\beta-w}{ \tau_{\rm ep}}\, ,\,
    \frac{\beta-w}{\tau_{\rm res}}\right), \quad t>0,\\
    &w(t=0)=w_0.
\end{align*}
If in addition $0\le w_0 \le 1$ for almost all $x\in \Gamma$, then $0\le w\le 1$ for all $t\in[0,T]$ and almost all $x\in \Gamma$.
\end{lemma}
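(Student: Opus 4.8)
## Proof Proposal

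The plan is to treat the problem pointwise in $x\in\Gamma$, so that for almost every fixed $x$ the equation becomes a scalar ODE in $t$ with a continuous (in fact Lipschitz) right-hand side, and then to invoke the classical Cauchy–Lipschitz theory together with a comparison argument. First I would observe that the map $(t,w)\mapsto \max\bigl(\tfrac{\beta(t,x)-w}{\tau_{\rm ep}},\tfrac{\beta(t,x)-w}{\tau_{\rm res}}\bigr)$ is, for each fixed $x$, continuous in $t$ (since $\beta\in C([0,T];L^\infty(\Gamma))$) and globally Lipschitz in $w$ with a constant $\max(\tau_{\rm ep}^{-1},\tau_{\rm res}^{-1})$ independent of $t$ and $x$: indeed $\max(a,b)=\tfrac12(a+b)+\tfrac12|a-b|$ gives $\partial_t w=\overline\tau_1(\beta-w)+\overline\tau_2|\beta-w|$ with $\overline\tau_1,\overline\tau_2$ as in the main text, and $w\mapsto\overline\tau_1(\beta-w)+\overline\tau_2|\beta-w|$ is Lipschitz. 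Hence for each $x$ there is a unique $C^1$ solution $w(\cdot,x)$ on $[0,T]$ by the Picard–Lindelöf theorem, and global existence is automatic since the right-hand side has at most linear growth in $w$.

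Next I would promote this pointwise solution to the function-space statement $w\in C^1([0,T];L^\infty(\Gamma))$. The key is a uniform bound: since $0\le\beta\le1$, one checks that the constant functions $w\equiv0$ and $w\equiv1$ are a sub- and super-solution respectively (when $w=0$, $\partial_t w=\overline\tau_1\beta+\overline\tau_2|\beta|\ge0$; when $w=1$, $\partial_t w=\overline\tau_1(\beta-1)+\overline\tau_2|\beta-1|=(\beta-1)(\overline\tau_1-\overline\tau_2)=(\beta-1)\tau_{\rm ep}^{-1}\le0$, using $\overline\tau_1-\overline\tau_2=\tau_{\rm ep}^{-1}>0$). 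Therefore, whenever $0\le w_0(x)\le1$, the scalar comparison principle for ODEs gives $0\le w(t,x)\le1$ for all $t$ and a.e. $x$. In particular $\sup_{t,x}|w(t,x)|\le\max(1,\|w_0\|_{L^\infty})$, which yields $w\in L^\infty((0,T)\times\Gamma)$; measurability in $x$ is inherited from $w_0$ and $\beta$ through the (continuous) solution map of the ODE, so $w(t,\cdot)\in L^\infty(\Gamma)$ for each $t$, and continuity in $t$ with values in $L^\infty(\Gamma)$ follows from the uniform-in-$x$ Lipschitz dependence of the flow on $t$. Differentiating the equation and using the bound on $w$ and the regularity of $\beta$ gives $\partial_t w\in C([0,T];L^\infty(\Gamma))$, hence $w\in C^1([0,T];L^\infty(\Gamma))$.

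Finally, uniqueness in the stated class follows from the pointwise uniqueness already established, or alternatively by a Grönwall argument: if $w_1,w_2$ are two solutions with the same data, subtracting the equations, multiplying by $w_1-w_2$ and integrating over $\Gamma$, the global Lipschitz property of the right-hand side in $w$ gives $\tfrac{d}{dt}\|w_1-w_2\|_{L^2(\Gamma)}^2\le C\|w_1-w_2\|_{L^2(\Gamma)}^2$, whence $w_1\equiv w_2$.

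I do not expect a serious obstacle here; the statement is essentially a parametrized ODE fact. The one point requiring mild care is the \emph{measurability and continuity in $x$} of the solution, i.e. verifying that the pointwise-in-$x$ construction genuinely produces a function with values in $L^\infty(\Gamma)$ depending continuously on $t$ — this is handled by the uniform (in $x$) Lipschitz and linear-growth bounds, which make the solution operator of the ODE continuous as a map on $L^\infty(\Gamma)$. A second small subtlety is the choice of super-solution: one must use the correct branch of the $\max$, namely that at $w=1$ the relevant rate is $\tau_{\rm ep}^{-1}$ (since $\beta-1\le0$), so that $\partial_t w\le0$ there; this is exactly the resealing/permeabilization dichotomy described after \eqref{def:S_m}.
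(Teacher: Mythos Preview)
Your argument is correct, and for existence/uniqueness it is essentially the paper's: both invoke Picard--Lindel\"of via the global Lipschitz property of the right-hand side in $w$ (the paper applies it directly as a Banach-space-valued ODE in $L^\infty(\Gamma)$, citing Ciarlet, while you work pointwise in $x$ and then promote; either is fine). For the bound $0\le w\le1$, however, you take a genuinely different route: you identify the constants $0$ and $1$ as sub- and super-solutions and appeal to the scalar comparison principle, whereas the paper argues via the $L^1$-norm of the negative part, setting $m(t)=\|w^-\|_{L^1(\Gamma)}$, showing $m'(t)\le0$ from the equation (using $\beta\ge0$), and concluding $m\equiv0$ from $m(0)=0$. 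Your approach is more direct for a scalar ODE; the paper's has the mild advantage of being formulated entirely at the level of integrals over $\Gamma$, avoiding any pointwise discussion. One small slip: $\overline\tau_1-\overline\tau_2=\tau_{\rm res}^{-1}$, not $\tau_{\rm ep}^{-1}$ (and correspondingly the active branch at $w=1$, where $\beta-w\le0$, is the resealing rate $\tau_{\rm res}^{-1}$); since this is still positive the sign conclusion $\partial_t w\big|_{w=1}\le0$ is unaffected.
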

\begin{proof}
Since $f(t,w)=\max \left(\frac{\beta(t)-w}{ \tau_{\rm ep}}\, ,\,
\frac{\beta(t)-w}{\tau_{\rm res}}\right)$ is $C([0,T]\times \mathbb R; L^\infty(\Gamma))$, continuous in $t$ and Lipschitz continuous in $w$, the existence and uniqueness of a solution is given by the Cauchy-Lipschitz theorem (Theorem 3.8-1 and the remark at the end of Chapter 3, \cite{ciarlet2025linear}). 

Let us prove that the uniform estimate $0\le w_0\le 1$ is preserved. Denote $w^-(t,x)=\max\{-w, 0\}$ the negative part of $w$ and consider its $L^1(\Gamma)$-norm:
\begin{align*}
    m(t):=\|w^-\|_{L^1(\Gamma)} = \int_\Gamma \max\{-w, 0\}\, \dd S.
\end{align*}
Since $w_0\ge 0$, $m(0)=0$. Since $w$ is continuously differentiable in time for almost all $x$, $w^-$ is Lipschitz continuous, and the time derivative exists almost everywhere in $t$ and for almost all $x$. Differentiating $m(t)$ we get
\begin{align*}
m'(t)= \int_\Gamma \partial_t \max\{-w,0\}\, \dd S(x)
= - \int_{\{x:\,\, w<0\}} \partial_t w(t,x)\, \dd S(x).
\end{align*}
Note that $\partial_t w$ is integrable since $\beta$ is bounded and $w\in C([0,T]; L^\infty(\Gamma))$.
Using the equation for $w$, {since $\beta \geq 0$,} we have $m'(t)\le 0$ for almost all $t$. Since $m(0)=0$ and $m(t) \ge 0$ for all $t$, the nonpositive derivative implies that $m(t)=0$ for all $t$. Indeed, since $m(t)$ is absolutely continuous,
\begin{align*}
m(t)=m(0)+\int_0^t m'(s)\, \dd {s} \le 0,
\end{align*}
at the same time as $m(t)\ge 0$ by definition. Thus $m(t)=0$ and $w\ge 0$.
In a similar fashion, we prove that $w\le 1$ for all $t>0$.

\end{proof}

The main result of this section is the well-posedness of \eqref{eq:abstract-prob}.
\begin{lemma}
\label{lm:existence-v_w}
    Under the hypotheses (A1)-(A4), (A5), there exists a unique solution $(v,w)$ of \eqref{eq:abstract-prob}
    \begin{align*}
    &v \in C([0,T]; L^2(\Gamma))\cap L^2(0,T; H^{1/2}(\Gamma)) \cap C^1([0,T]; H^{-1/2}(\Gamma)),\\
    &w \in C([0,T];H^{1/2}(\Gamma))\cap C^1([0,T]; L^2(\Gamma)).
    \end{align*}
    {If in addition (A4'), (A5') are satisfied, $v\in C([0,T]; H^{1/2}(\Gamma))$, $\partial_t v \in L^2(0,T; L^2(\Gamma))$.} 
\end{lemma}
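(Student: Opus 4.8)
\medskip
\noindent\textbf{Proof plan.}
The plan is to prove existence for the Cauchy problem \eqref{eq:abstract-prob} by a Galerkin scheme in which the parabolic unknown $v$ is discretized in an eigenbasis of $-\mathcal{L}$ while the ODE unknown $w$ is solved exactly, pointwise in $x\in\Gamma$, along the Galerkin velocity — so that the uniform bound $0\le w\le 1$ of Lemma~\ref{lm:bound-w} holds at every level — and then to prove uniqueness through an energy identity for the difference of two solutions in which the non-monotone quadratic term is controlled by a two-dimensional interpolation estimate. By (A1) the operator $-\mathcal{L}:H^{1/2}(\Gamma)\to H^{-1/2}(\Gamma)$ is self-adjoint, coercive, and has compact resolvent (the embedding $H^{1/2}(\Gamma)\hookrightarrow L^2(\Gamma)$ is compact since $\Gamma$ is a bounded Lipschitz hypersurface), so there is an $L^2(\Gamma)$-orthonormal basis $\{e_k\}$ of its eigenfunctions; set $V_n=\operatorname{span}\{e_1,\dots,e_n\}$ and $P_n$ the $L^2(\Gamma)$-projection onto $V_n$. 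For fixed $n$ I look for $v_n(t)\in V_n$, $v_n(0)=P_nv_0$, solving $(\partial_tv_n,e_k)=\langle\mathcal{L}v_n,e_k\rangle+(g(v_n,w_n),e_k)+\langle p,e_k\rangle$ for $k=1,\dots,n$, coupled with $w_n$ solving $\partial_tw_n=f(v_n,w_n)$, $w_n(0)=w_0$, pointwise in $x$; by Lemma~\ref{lm:bound-w} this gives $w_n\in C^1([0,T];L^\infty(\Gamma))$ with $0\le w_n\le1$, and the coupled system is a Carathéodory system in $V_n\times L^\infty(\Gamma)$ with a locally Lipschitz right-hand side — here the finite dimension of $V_n$ together with the bound on $w_n$ make $g(v_n,w_n)=(S_0+S_1w_n)v_n$ Lipschitz, exactly the point noted in the preamble — so a unique local solution exists.

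I would then derive uniform (in $n$) a priori estimates which also continue the solution to $[0,T]$. Testing the Galerkin equations with $v_n$ and using the coercivity (A1)(i), $0\le w_n\le1$, and Young's inequality yields
\begin{align*}
\frac{\dd}{\dd t}\|v_n\|_{L^2(\Gamma)}^2+\alpha\|v_n\|_{H^{1/2}(\Gamma)}^2\le C\|v_n\|_{L^2(\Gamma)}^2+C\|p\|_{H^{-1/2}(\Gamma)}^2,
\end{align*}
so by Grönwall $v_n$ is bounded in $L^\infty(0,T;L^2(\Gamma))\cap L^2(0,T;H^{1/2}(\Gamma))$, whence by the growth bound (A1)(ii) and $\|g(v_n,w_n)\|_{L^2(\Gamma)}\le(S_0+S_1)\|v_n\|_{L^2(\Gamma)}$ the time derivative $\partial_tv_n$ is bounded in $L^2(0,T;H^{-1/2}(\Gamma))$ (this mirrors Lemma~\ref{lm:est-fin-dim}). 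By the Aubin--Lions lemma with the compact embedding $H^{1/2}(\Gamma)\hookrightarrow L^2(\Gamma)$ I extract a subsequence with $v_n\to v$ strongly in $L^2((0,T)\times\Gamma)$, $v_n\rightharpoonup v$ in $L^2(0,T;H^{1/2}(\Gamma))$ and $\partial_tv_n\rightharpoonup\partial_tv$ in $L^2(0,T;H^{-1/2}(\Gamma))$. Letting $w$ be the exact solution of $\partial_tw=f(v,w)$, $w(0)=w_0$ (bounded with $0\le w\le1$ by Lemma~\ref{lm:bound-w}), testing $\partial_t(w_n-w)=f(v_n,w_n)-f(v,w)$ with $w_n-w$ and applying (A3) and Grönwall gives $w_n\to w$ in $C([0,T];L^2(\Gamma))$. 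Then $g(v_n,w_n)\to(S_0+S_1w)v$ and $f(v_n,w_n)\to f(v,w)$ strongly in $L^2((0,T)\times\Gamma)$, so passing to the limit in the Galerkin equations shows $(v,w)$ is a weak solution; the initial conditions hold since $P_nv_0\to v_0$ in $L^2(\Gamma)$ and by construction of $w$. The stated time regularity of $v$ and $w$ follows from the uniform bounds and standard continuity-in-time arguments (for $w\in C([0,T];H^{1/2}(\Gamma))$ one propagates the Gagliardo seminorm along $\partial_tw=f(v,w)$ using that $f$ is Lipschitz, $v\in L^2(0,T;H^{1/2}(\Gamma))$ and $w_0\in H^{1/2}(\Gamma)$).

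The main obstacle is uniqueness, since the quadratic term $S_1wv$ is neither monotone nor Lipschitz. Given two solutions $(v_i,w_i)$, $i=1,2$, set $\bar v=v_1-v_2$, $\bar w=w_1-w_2$ and write $g(v_1,w_1)-g(v_2,w_2)=(S_0+S_1w_1)\bar v+S_1v_2\bar w$. Testing the difference of the $v$-equations with $\bar v$, using $0\le w_1\le1$ and the critical Sobolev embedding $H^{1/2}(\Gamma)\hookrightarrow L^4(\Gamma)$ (valid because $\dim\Gamma=2$), and absorbing $\|\bar v\|_{L^4(\Gamma)}\le C\|\bar v\|_{H^{1/2}(\Gamma)}$ into the dissipation, I obtain
\begin{align*}
\frac{\dd}{\dd t}\|\bar v\|_{L^2(\Gamma)}^2\le C\|\bar v\|_{L^2(\Gamma)}^2+C\,\|v_2\|_{L^4(\Gamma)}^2\,\|\bar w\|_{L^2(\Gamma)}^2,
\end{align*}
while the one-sided Lipschitz bound (A3) gives $\tfrac{\dd}{\dd t}\|\bar w\|_{L^2(\Gamma)}^2\le C(\|\bar v\|_{L^2(\Gamma)}^2+\|\bar w\|_{L^2(\Gamma)}^2)$. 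Adding, $\tfrac{\dd}{\dd t}(\|\bar v\|_{L^2(\Gamma)}^2+\|\bar w\|_{L^2(\Gamma)}^2)\le C(1+\|v_2\|_{L^4(\Gamma)}^2)(\|\bar v\|_{L^2(\Gamma)}^2+\|\bar w\|_{L^2(\Gamma)}^2)$; since $v_2\in L^2(0,T;H^{1/2}(\Gamma))\hookrightarrow L^2(0,T;L^4(\Gamma))$ the coefficient lies in $L^1(0,T)$, so Grönwall's inequality and $\bar v(0)=\bar w(0)=0$ force $\bar v\equiv\bar w\equiv0$. This is precisely the "generalized monotonicity along solutions", with the $\|v_2\|_{L^4(\Gamma)}$-factor, referred to in the preamble.

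Finally, under (A4')--(A5') I would improve the regularity of $v$ by testing the Galerkin equations with $\partial_tv_n$: self-adjointness of $\mathcal{L}$ turns $\langle\mathcal{L}v_n,\partial_tv_n\rangle$ into $\tfrac12\tfrac{\dd}{\dd t}\langle\mathcal{L}v_n,v_n\rangle$, the forcing term $\int_0^t\langle p,\partial_sv_n\rangle\,\dd s$ is integrated by parts in time so that only $p\in C([0,T];H^{-1/2}(\Gamma))$ and $\partial_tp\in L^2(0,T;H^{-1/2}(\Gamma))$ enter, and the boundary term at $t=0$ is finite precisely because $v_0\in H^{1/2}(\Gamma)$. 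After absorbing the $\|\partial_tv_n\|_{L^2(\Gamma)}^2$ terms this bounds $\sup_t\|v_n\|_{H^{1/2}(\Gamma)}^2+\int_0^T\|\partial_tv_n\|_{L^2(\Gamma)}^2\,\dd t$ uniformly in $n$, and passing to the limit gives $v\in L^\infty(0,T;H^{1/2}(\Gamma))$ with $\partial_tv\in L^2(0,T;L^2(\Gamma))$, hence $v\in C([0,T];H^{1/2}(\Gamma))$ by the standard continuity criterion. Throughout, the genuine difficulty is the non-monotone quadratic nonlinearity: at the level of existence it is handled by the pointwise bound $0\le w\le1$ of Lemma~\ref{lm:bound-w}, and at the level of uniqueness by the two-dimensional interpolation estimate above.
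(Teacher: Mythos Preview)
Your proof is correct and follows the same overall strategy as the paper --- Galerkin approximation, uniform energy estimates, compactness, and uniqueness via the $H^{1/2}(\Gamma)\hookrightarrow L^4(\Gamma)$ embedding to control the quadratic term --- but there is one genuine and instructive difference. The paper projects \emph{both} unknowns onto the finite-dimensional span, writing $w_m=\sum_i w_{im}(t)e_i$ and $\partial_t w_m=P_m f(v_m,w_m)$, whereas you discretize only $v$ and solve the ODE for $w_n$ exactly, pointwise in $x$. Your choice buys something real: Lemma~\ref{lm:bound-w} applies verbatim at every Galerkin level, so $0\le w_n\le 1$ is immediate and the nonlinearity $g(v_n,w_n)$ is globally Lipschitz in $v_n$ without further argument. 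In the paper's version the projected equation $\partial_t w_m=P_m f(v_m,w_m)$ is not literally the ODE of Lemma~\ref{lm:bound-w}, and the claim $\|w_m\|_{L^\infty(\Gamma)}\le C$ requires a bit more care (the $L^2$-projection need not preserve pointwise bounds). The price you pay is that your coupled system lives in $V_n\times L^\infty(\Gamma)$, an infinite-dimensional Banach space, so local existence is Picard--Lindel\"of in a Banach space rather than finite-dimensional ODE theory; this is fine but worth stating explicitly. The uniqueness argument --- splitting $g(v_1,w_1)-g(v_2,w_2)=(S_0+S_1w_1)\bar v+S_1 v_2\bar w$, absorbing $\|\bar v\|_{L^4}$ into the coercive term, and closing Gr\"onwall with the $L^1(0,T)$ coefficient $1+\|v_2\|_{L^4(\Gamma)}^2$ --- is identical to the paper's and is exactly the ``generalized monotonicity along solutions'' flagged in the preamble.
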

\begin{proof}
Let $\{e_j\}_{j=1}^\infty$ be an orthonormal basis in $L^2(\Gamma)$ that is also a basis in $H^{1/2}(\Gamma)$. Observe that $e_j \in L^4(\Gamma)$ due to the continuous embedding $H^{1/2}(\Gamma)$ into $L^4(\Gamma)$ for a two-dimensional surface $\Gamma$. Define also $P_m:H^{-1/2}(\Gamma) \to H_m$ by
\begin{align*}
    P_m y = \sum_{i=1}^m \langle y, e_i \rangle e_i, \quad y\in H^{-1/2}(\Gamma).
\end{align*}
For any $v\in H^{1/2}(\Gamma), \phi \in H_m$ we have
\begin{align*}
    \langle P_m \mathcal{L} v, \phi \rangle
    = (P_m\mathcal{L} v, \phi)_{L^2(\Gamma)}
    = \langle \mathcal{L} v, \phi \rangle.
\end{align*}
We introduce Galerkin approximations
$$
v_m(t,x) = \sum_{i=0}^{m} v_{im}(t) e_i(x), \qquad
w_m(t,x) = \sum_{i=0}^{m} w_{im}(t) e_i(x),
$$
which satisfy
\begin{equation}
\label{finite-dimensional}
\begin{aligned}
\displaystyle 
& \frac{d v_m}{dt} = P_m \mathcal{L} v_m + P_m g(v_m, w_m) + P_m p,\\[2mm]
\displaystyle
&\frac{d w_m}{dt} = P_m f(v_m,w_m),\\
& v_m(0) = v_{0,m}= P_m v_0, \quad w_m(0) = w_{0,m}= P_m w_0.
\end{aligned}
\end{equation}
Here $P_m$ is a projector acting from $H^{-1/2}(\Gamma)$ into the finite-dimensional subspace $H_n \subset H^{1/2}(\Gamma)$ spanned by $\{e_j\}_{j=1}^m$.
\begin{lemma}
\label{lm:est-fin-dim}
    Let the assumptions of Lemma \ref{lm:existence-v_w} be satisfied. Then there exists a unique global solution of \eqref{finite-dimensional} 
    \begin{align*}
    &v_m \in C([0,T]; L^2(\Gamma))\cap L^2(0,T; H^{1/2}(\Gamma)), \,\, \partial_t v_m \in  L^\infty(0,T; H^{-1/2}(\Gamma)),\\
    &w_m \in C([0,T];H^{1/2}(\Gamma))\cap C^1(0,T; L^2(\Gamma))
    \end{align*}
    such that for all $m$ and some constant $C$, independent of $m$, the energy estimate holds:
    \begin{align}
    \label{estimates-finite-dim}
    \begin{split}
    &\sup_{0\le t\le T} \|v_m\|_{L^2(\Gamma)} + \|v_m(t, \cdot)\|_{L^2(0,T; H^{1/2}(\Gamma))}
    +  \|\partial_t v_m\|_{L^2(0,T; H^{-1/2}(\Gamma))}\\
    &+ \sup_{0\le t\le T}\|w_m(t, \cdot)\|_{H^{1/2}(\Gamma)}
    + \sup_{0\le t\le T}\|\partial_t w_m\|_{L^2(\Gamma)}
    + \|\partial_t w_m\|_{L^2(0,T; H^{1/2}(\Gamma))} \\ 
    &\le C(\|p\|_{L^2(0,T; H^{-1/2}(\Gamma))} + \|v_0\|_{L^2(\Gamma)} + \|w_0\|_{H^{1/2}(\Gamma)}).
    \end{split}
    \end{align}
{Moreover, if in addition (A4') and (A5') hold, then $v_m \in C([0,T]; H^{1/2}(\Gamma))$, $\partial_t v_m \in L^2(0,T; L^2(\Gamma))$ and 
\begin{align}
\label{eq:est-partial_t v_m}
\begin{split}
&\sup_{0\le t\le T}\|v_m\|_{H^{1/2}(\Gamma)}
+ \|\partial_t v_m\|_{L^2(0,T; L^2(\Gamma))}\\ 
&\le C(\sup_{0\le t \le T}\|p\|_{H^{-1/2}(\Gamma)} + 
\|\partial_t p\|_{L^2(0,T; H^{-1/2}(\Gamma))}
+\|v_0\|_{H^{1/2}(\Gamma)}).
\end{split}
\end{align}
}
\end{lemma}
\begin{proof}
Multiplying \eqref{finite-dimensional} by $e_k$ and using the orthogonality of basis vectors, we obtain a system of ordinary differential equations for $\{v_{im}\}, \{w_{im}\}$:
\begin{equation}
\label{fin-dim-coef}
\begin{aligned}
\displaystyle 
& \frac{d v_{km}}{dt} = \sum_{j=1}^m \big(P_m \mathcal{L} e_j, e_k\big)_{L^2(\Gamma)} v_{jm} + \big(P_m g(\sum_{j=1}^m v_{jm}e_j, \sum_{j=1}^m w_{jm}e_j)\, , \, e_k\big)_{L^2(\Gamma)} + (P_m p, e_k)_{L^2(\Gamma)},\\[2mm]
\displaystyle
&\frac{d w_{km}}{dt} = \big(P_m f(\sum_{j=1}^m v_{jm}e_j, \sum_{j=1}^m w_{jm}e_j) \, , \, e_k\big)_{L^2(\Gamma)},\\
& v_{km}(0) = (P_m v_0, e_k)_{L^2(\Gamma)}, \quad w_{km}(0) = (P_m w_0, e_k)_{L^2(\Gamma)}.
\end{aligned}
\end{equation}
{The Lipschitz continuity of the linear terms and the right-hand side in the second equation is straightforward. To check the locally Lipschitz continuity of the term containing $g$, we take an arbitrary bounded set $B_R \subset \mathbf{R}^{2m}$, where $|v_m|, |w_m| \le R$. For any $(v_m, w_m), (\tilde v_m, \tilde w_m)$ in $B_R$, let us show that
\begin{align*}
\big|\big(P_m (v_m w_m - \tilde v_m \tilde w_m), \, e_k\big)_{L^2(\Gamma)}\big| \le
C(m,R) \big(\big(\sum_{j=1}^m |v_{jm} - \tilde v_{im}|^2\big)^{1/2} + \big(\sum_{j=1}^m |w_{jm} - \tilde w_{im}|^2\big)^{1/2}\big).
\end{align*}
Note first that 
\begin{align*}
|\big(P_m v_m w_m, \, e_k\big)_{L^2(\Gamma)}|
= \sum_{i, j} v_{jm} w_{im} (e_i e_j, e_k)_{L^2(\Gamma)} \le C \sum_{j} v_{jm} \sum_i w_{im},
\end{align*}
since $(e_i e_j, e_k)_{L^2(\Gamma)} <\infty$ due to $\{e_i\}_i \subset H^{1/2}(\Gamma)$. 
Adding and subtracting $v_m \tilde w_m$, we obtain
\begin{align*}
&\big|\big(P_m (v_m w_m - \tilde v_m \tilde w_m), \, e_k\big)_{L^2(\Gamma)}\big| \le
\big|\big(v_m (w_m - \tilde w_m), \, e_k\big)_{L^2(\Gamma)}\big|+
\big|\big(\tilde w_m (v_m - \tilde v_m), \, e_k\big)_{L^2(\Gamma)}\big|\\[2mm]
& \le C\big(\sum_j |v_{jm}| \sum_i |w_{im} - \tilde w_{im}|
+ \sum_i |\tilde w_{im}| \sum_j |v_{jm} - \tilde v_{jm}|\big)\\[2mm]
&\le C(m, R) \big(\big(\sum_{j=1}^m |v_{jm} - \tilde v_{im}|^2\big)^{1/2} + \big(\sum_{j=1}^m |w_{jm} - \tilde w_{im}|^2\big)^{1/2}\big).
\end{align*}
By the classical Picard-Lindelöf theorem (see also Problem 3.8-1, \cite{ciarlet2025linear}), since $p$ is continuous in time and the nonlinear functions $g, f$ are locally Lipschitz continuous, there exists a time $T_0>0$ and a unique (local) solution $(v_m, w_m)\in C^1([0,T_0])^2$ for some $T_0>0$.} 
Since the initial condition is bounded $0\le w_0\le 1$, the local solution $w_m$ satisfies the uniform estimate $\|w_m\|_{L^\infty(\Gamma)}\le C$ for $t\in [0,T_0]$ (Lemma~\ref{lm:bound-w}). 

Let us prove that the solution does not blow up in finite time. We multiply \eqref{finite-dimensional} by $(v_m, w_m)$ and integrate over $\Gamma$:
\begin{align*}
&\frac{1}{2}\frac{d}{dt}\|v_m\|_{L^2(\Gamma)}^2= (P_m \mathcal{L} v_m, v_m) + (P_m g(v_m, w_m), v_m) + (P_m p, v_m).
\end{align*}
Since $g(v_m, w_m)=(S_{\rm ir}+S_L w_m)v_m$ and $\|w_m\|_{L^\infty(\Gamma)} \le C$, we have
\begin{align*}
    |(P_m g(v_m, w_m), v_m)| \le C(1+ \|v_m\|_{L^2(\Gamma)}^2).
\end{align*}
Combining the last estimate with the coercivity of $\mathcal{L}$, Lipschitz property of $f$, and the uniform estimate for $w_m$, we obtain
\begin{align}
\frac{1}{2}\frac{d}{dt}\|v_m\|_{L^2(\Gamma)}^2 &\le -\sigma_0 \|v\|_{H^{1/2}(\Gamma)}^2 + K_0 + K_1\|v_m\|_{L^2(\Gamma)}^{2} + \|p\|_{H^{-1/2}(\Gamma)} \|v\|_{H^{1/2}(\Gamma)}. \label{est-v_m-0}
\end{align}
Applying the Cauchy inequality with a parameter, to the last term on the right-hand side of \eqref{est-v_m-0}, we have  
\begin{align}
\frac{1}{2}\frac{d}{dt}\|v_m\|_{L^2(\Gamma)}^2 &\le K_0 + K_1\|v_m\|_{L^2(\Gamma)}^2 + K_2\|p\|_{H^{-1/2}(\Gamma)}^2. \label{est-v_m}
\end{align}
The Grönwall inequality yields that there is a constant $\gamma_0>0$ such that 
\begin{align*}
\|v_m(t,\cdot)\|_{L^2(\Gamma)}^2 + \|w_m(t,\cdot)\|_{L^2(\Gamma)}^2 \le C (\|v_0\|_{L^2(\Gamma)}^2 + \|w_0\|_{L^2(\Gamma)}^2 + \|p\|_{L^2(0,T; H^{-1/2}(\Gamma))}^2) e^{\gamma_0 t}, \quad t\ge 0,
\end{align*}
and thus
\begin{align*}
\sup_{t\in [0,T]}(\|v_m(t,\cdot)\|^2 + \|w_m(t,\cdot)\|^2) \le C (\|v_0\|^2 + \|w_0\|^2 + \|p\|_{L^2(0,T; H^{-1/2}(\Gamma))}^2).
\end{align*}
The estimate  means that $(v_m, w_m)$ does not blow up in finite time, and the solution is global. Further, combining estimates \eqref{est-v_m-0} and \eqref{est-v_m}, and integrating with respect to time, we obtain an integral in time estimate for $\|v_m\|_{H^{1/2}(\Gamma)}$. 

To obtain an estimate for the time derivatives $\partial_t v_m, \partial_t w_m$, we take a test functions $\phi\in H^{1/2}(\Gamma), \varphi \in L^2(\Gamma)$ such that $\|\phi\|_{H^{1/2}(\Gamma)}\le 1$, $\|\varphi\|_{L^2(\Gamma)}\le 1$. We decompose the test functions into two components $\phi=\phi_m + \phi_{\perp}$, $\varphi=\varphi_m + \varphi_\perp$, where 
\begin{align*}
\phi_m, \varphi_m \in {\rm span}\{e_k\}_{k=1}^m, \quad
(\phi_\perp, e_k)=(\varphi_\perp, e_k)=0 \,\,\mbox{for}\,\, k=1, \ldots, m. 
\end{align*}
Thus, 
\begin{align*}
&\langle \partial_t v_m (t, \cdot), \phi\rangle = \langle \partial_t v_m (t, \cdot), \phi_m\rangle 
= (P_m \mathcal{L} v_m, \phi_m) + (P_m g(v_m, w_m), \phi_m) + (P_m p, \phi_m),\\
&(\partial_t w_m (t, \cdot), \varphi)_{L^2(\Gamma)} = (\partial_t w_m (t, \cdot), \varphi_m) 
=(P_m f(v_m, w_m), \varphi_m).
\end{align*}
Using again the growth assumption on $\mathcal{L}$, the global Lipschitz continuity of $f$, the uniform boundedness of $w_m$, and the boundedness of the test functions, we obtain
\begin{align*}
&|\langle \partial_t v_m, \phi\rangle| \le C(\|v_m\|_{H^{1/2}(\Gamma)} + \|v_m\|_{L^2(\Gamma)} + \|p\|_{H^{-1/2}(\Gamma)}) ,\\
&|(\partial_t w_m, \varphi)| \le C(1 + \|v_m\|_{L^2(\Gamma)}), \quad t\in[0,T].
\end{align*}
Thus, 
\begin{align*}
\begin{split}
&\int_0^T \|\partial_t v_m\|_{H^{-1/2}(\Gamma)}dt \le C(\|v_m\|_{L^2(0,T; H^{1/2}(\Gamma))} + \|p\|_{L^2(0,T;H^{-1/2}(\Gamma))}) ,\\
&\sup_{t\in[0,T]}\|\partial_t w_m\|_{L^2(\Gamma)} \le C(1 + \sup_{t\in[0,T]}\|v_m\|_{L^2(\Gamma)}),
\end{split}
\end{align*}
and the estimate for the time derivatives follows from the estimate for $\|v_m\|_{L^2(0,T; H^{1/2}(\Gamma))}$ proved above.

Next, suppose (A4') and (A5') hold. Multiplying \eqref{fin-dim-coef} by $v_{km}'(t)$ and summing in $k=1, \ldots, m$, we arrive at
\begin{align*}
\|\partial_t v_m\|_{L^2(\Gamma)}^2
&= \frac{1}{2}\frac{\dd}{\dd t}(P_m \mathcal{L}v_m, v_m) 
+ (P_m g(v_m, w_m), \partial_t v_m)
+ (P_m p, \partial_t v_m)\\
&= \frac{1}{2}\frac{\dd}{\dd t}(P_m \mathcal{L}v_m, v_m) 
+ (P_m g(v_m, w_m), \partial_t v_m)
+ \frac{\dd}{\dd t}(P_m p, v_m) - (P_m \partial_t p, v_m).
\end{align*}
Integrating in time, using the coercivity and the growth condition of $\mathcal{L}$, and applying the Cauchy inequality with a parameter in a similar way as above (see estimate \eqref{est-v_m}), we obtain \eqref{eq:est-partial_t v_m}.

Since there are no spatial derivatives in the equation for $w_m$, the regularity in $x$ is inherited from the initial condition. Assuming that $w_0\in H^{1/2}(\Gamma)$ and writing the integral representation of the solution
\begin{align*}
    w_m = w_0 + \int_0^t P_m f(v_m, w_m)\, d\tau,
\end{align*}
we can, using the uniform boundedness $\|v_m\|_{L^2(0,T; H^{1/2}(\Gamma))}\le C$, estimate the $H^{1/2}$-norm of $w_m$:
\begin{align*}
\|w_m\|_{H^{1/2}(\Gamma)} \le \|w_0\|_{H^{1/2}(\Gamma)} + \int_0^t \|f(v_m, w_m)\|_{H^{1/2}(\Gamma)}\, d\tau
\le
\|w_0\|_{H^{1/2}(\Gamma)} + C \int_0^t
(1 + \|w_m\|_{H^{1/2}(\Gamma)})\, d\tau.
\end{align*}
Note that $ \|w_m\|_{H^{1/2}(\Gamma)}$ is integrable. Using the Grönwall inequality completes the proof.
\end{proof}
Due to the uniform estimates \eqref{estimates-finite-dim}, there exists $v\in L^2(0,T; H^{1/2}(\Gamma))$ with $\partial_t v \in L^2(0,T; H^{-1/2}(\Gamma))$ and $w\in C([0,T]; H^{1/2}(\Gamma))$ with $\partial_t w \in L^2(0,T; H^{1/2}(\Gamma))$ such that, up to a subsequence,
\begin{align*}
\begin{cases}
  &v_m \rightharpoonup v \,\, \mbox{weakly in}\,\, L^2(0,T; H^{1/2}(\Gamma))\\
  &v_m \to v \,\, \mbox{strongly in}\,\, L^2(0,T; L^2(\Gamma))\\
  & \partial_t v_m \rightharpoonup \partial_t v \,\, \mbox{weakly in}\,\, L^2(0,T; H^{-1/2}(\Gamma)),\\
  &w_m \to w \,\, \mbox{strongly in}\,\, L^2(0,T; L^2(\Gamma))\\
  & \partial_t w_m \rightharpoonup \partial_t w \,\, \mbox{weakly in}\,\, L^2(0,T; L^2(\Gamma))\\
  & \mathcal{L} v_m \rightharpoonup \mathcal{L} v \,\, \mbox{weakly in}\,\, L^2(0,T; H^{-1/2}(\Gamma))
\end{cases}
\end{align*}
Note that $v, w\in C([0,T]; L^2(\Gamma))$. Next, we will pass to the limit as $m\to \infty$ and prove that the limit functions $(v,w)$ solve \eqref{eq:abstract-prob}.
For a fixed $N$, we choose test functions $\varphi = \sum_{k=1}^N \varphi_k(t) e_k(x)$, $\psi = \sum_{k=1}^N \psi_k(t) e_k(x)$ for some smooth functions $\varphi_k, \psi_k$. For $m\ge N$, we get the weak formulation
\begin{align}
\label{eq:weak-v_m}
\begin{split}
&\int_0^t\langle \partial_t v_m (t, \cdot), \varphi\rangle\, d\tau  = \int_0^t ((P_m \mathcal{L} v_m, \varphi)\, + (P_m g(v_m, w_m), \varphi) + (P_m p, \varphi))\, d\tau,\\
&\int_0^t(\partial_t w_m (t, \cdot), \psi)_{L^2(\Gamma)}=\int_0^t (P_m f(v_m, w_m), \psi)\, d\tau.
\end{split}
\end{align}
The strong convergence of $v_m, w_m$ makes it possible to pass to the limit in the nonlinear terms. 
Passing to the limit along a subsequence yields
\begin{align*}
&\int_0^t\langle \partial_t v (t, \cdot), \varphi\rangle\, d\tau  = \int_0^t (\langle \mathcal{L} v, \varphi\rangle + (g(v, w), \varphi) + \langle p, \varphi\rangle) \, d\tau,\\
&\int_0^t(\partial_t w (t, \cdot), \psi)_{L^2(\Gamma)}=\int_0^t (f(v, w), \psi)\, d\tau.
\end{align*}
Since the functions of the form $\varphi = \sum_{k=1}^N \varphi_k(t) e_k(x)$ are dense in $L^2(0,T; H^{1/2}(\Gamma))$, the last equality holds for arbitrary $v \in L^2(0,T; H^{1/2}(\Gamma))$, and in particular
\begin{align}\label{weak-v_w}
\begin{split}
    &\langle \partial_t v (t, \cdot), \varphi\rangle  = \langle \mathcal{L} v, \varphi\rangle + (g(v, w), \varphi) + \langle p, \varphi\rangle,\\
&(\partial_t w (t, \cdot), \psi)_{L^2(\Gamma)}=(f(v, w), \psi).
\end{split}
\end{align}
Taking test functions satisfying $\varphi(t=T)=0$, $\psi(t=T)=0$, integrating by parts with respect to time, and passing to the limit in \eqref{eq:weak-v_m}, we get the convergence of the initial values. Thus, $(v,w)$ is a solution of \eqref{eq:abstract-prob}. {Note that under additional assumptions (A4'), (A5'), the uniform estimate \eqref{eq:est-partial_t v_m} is satisfied and thus $v\in C([0,T]; H^{1/2}(\Gamma))$, $\partial_t v \in  L^2(0,T; L^2(\Gamma))$}.

To prove the uniqueness, we assume that there are two solutions $(v_1, w_1)$ and $(v_2, w_2)$ and use the test functions $v_1-v_2$, $w_1-w_2$ in the weak formulation \eqref{weak-v_w}.
\begin{align*}
&\frac{1}{2}\partial_t \|v_1-v_2\|_{L^2(\Gamma)}  = \langle \mathcal{L} (v_1-v_2), (v_1-v_2)\rangle + (g(v_1, w_1) - g(v_2, w_2), (v_1-v_2)),\\
&\frac{1}{2}\partial_t \|w_1-w_2\|_{L^2(\Gamma)}=(f(v_1, w_1)-f(v_2, w_2), (w_1-w_2)).
\end{align*}
{Since $g(v,w)=(S_{\rm ir}+S_L w)v$ and $0\le w\le 1$,
\begin{align*}
&(g(v_1, w_1) - g(v_2, w_2), v_1-v_2)_{L^2(\Gamma)}
= S_{\rm ir}\|v_1-v_2\|_{L^2(\Gamma)}^2\\ 
&+S_L(w_1(v_1-v_2), (v_1-v_2))_{L^2(\Gamma)}
+ S_L (v_2(w_1-w_2), (v_1-v_2))_{L^2(\Gamma)}\\
&\le (S_{\rm ir}+S_L)\|v_1-v_2\|_{L^2(\Gamma)}^2
+ S_L\|w_1-w_2\|_{L^2(\Gamma)} \|v_2\|_{L^4(\Gamma)}\|v_1-v_2\|_{L^4(\Gamma)}\\
& \le (S_{\rm ir}+S_L)\|v_1-v_2\|_{L^2(\Gamma)}^2
+ \frac{S_L \delta}{2} \|v_1-v_2\|_{L^4(\Gamma)}^2
+ \frac{S_L}{2\delta} \|w_1-w_2\|_{L^2(\Gamma)}^2\|v_2\|_{L^4(\Gamma)}^2\\
&\le \frac{S_L \delta}{2} \|v_1-v_2\|_{L^4(\Gamma)}^2
+ C(1+\|v_2\|_{L^4(\Gamma)}^2)\big(\|v_1-v_2\|_{L^2(\Gamma)}^2 + \|w_1-w_2\|_{L^2(\Gamma)}^2\big).
\end{align*}
By the coercivity of $\mathcal{L}$ and due to the continuous embedding of $H^{1/2}(\Gamma)$ into
$L^4(\Gamma)$, we can choose $\delta$ such that 
\begin{align}
\label{eq:est_generalized_monotonicity_existence}
\begin{split}
    &\langle \mathcal{L} (v_1-v_2), (v_1-v_2)\rangle
+ (g(v_1, w_1) - g(v_2, w_2), (v_1-v_2))\\
&\qquad \le 
C(1+\|v_2\|_{L^4(\Gamma)}^2)\big(\|v_1-v_2\|_{L^2(\Gamma)}^2 + \|w_1-w_2\|_{L^2(\Gamma)}^2\big).
\end{split}
\end{align}
Recalling the Lipschitz continuity of $f$, we estimate the norm of $V$:
\begin{align*}
&\|v_1-v_2\|_{L^2(\Gamma)} + \|w_1-w_2\|_{L^2(\Gamma)}  \le \|v_1(0,\cdot)-v_2(0,\cdot)\|_{L^2(\Gamma)}\\ 
&+  
C_1 \int_0^t (1+\|v_2\|_{H^{1/2}(\Gamma)}^2)\, (\|v_1-v_2\|_{L^2(\Gamma)}^2 + \|w_1-w_2\|_{L^2(\Gamma)}^2)\, d\tau.
\end{align*}
Then by the Grönwall inequality,
\begin{align*}
&\|v_1-v_2\|_{L^2(\Gamma)} + \|w_1-w_2\|_{L^2(\Gamma)} \\ 
&\le \|v_1(0,\cdot)-v_2(0,\cdot)\|_{L^2(\Gamma)}\exp \big(
C_1 \int_0^t(1+\|v_2\|_{H^{1/2}(\Gamma)}^2)\, d\tau\big).
\end{align*}
Since the initial condition is the same for the two solutions, $v_1(t=0)=v_2(t=0)$, we obtain $v_1-v_2=w_1-w_2=0$, and the solution is unique.}
\end{proof}


\bibliographystyle{plain}
\bibliography{EP}

@article{neuss2007effective,
  title={Effective transmission conditions for reaction-diffusion processes in domains separated by an interface},
  author={Neuss-Radu, Maria and J{\"a}ger, Willi},
  journal={SIAM Journal on Mathematical Analysis},
  volume={39},
  number={3},
  pages={687--720},
  year={2007},
  publisher={SIAM}
}

@article{gahn2016homogenization,
  title={Homogenization of reaction--diffusion processes in a two-component porous medium with nonlinear flux conditions at the interface},
  author={Gahn, Markus and Neuss-Radu, Maria and Knabner, Peter},
  journal={SIAM Journal on Applied Mathematics},
  volume={76},
  number={5},
  pages={1819--1843},
  year={2016},
  publisher={SIAM}
}

@article{jerez2020derivation,
  title={Derivation of cable equation by multiscale analysis for a model of myelinated axons},
  author={Jerez-Hanckes, C and Pettersson, I and Rybalko, V},
  journal={Discrete and Continuous Dynamical Systems-Series B},
  volume={25},
  number={3},
  pages={815--839},
  year={2020},
  publisher={American Institute of Mathematical Sciences}
}

@article{AmaAndTim2021,
  title={Homogenization of a modified bidomain model involving imperfect transmission},
  author={Micol Amar and Daniele Andreucci and Claudia Timofte},
  journal={Communications on Pure and Applied Analysis},
  year={2021},
  volume={20},
  number={5},
  pages={1755--1782}
}

@article{BenMroSaaTal2019,
title={Unfolding homogenization
method applied to physiological and phenomenological bidomain models in electrocardiology}, 
author={Bendahmane, Mostafa and Mroue, Fatima and   Saad,Mazen and   Talhouk, Raafat},
journal={Nonlinear Analysis: Real World Applications}, 
year = {2019}, 
volume={50}, 
pages={413--447}
}

@article{pennacchio2005,
	Author = {Pennacchio, Micol and Savar{\'e}, Giuseppe and Franzone, Piero Colli},
	Journal = {SIAM Journal on Mathematical Analysis},
	Number = {4},
	Pages = {1333--1370},
	Publisher = {SIAM},
	Title = {Multiscale modeling for the bioelectric activity of the heart},
	Volume = {37},
	Year = {2005}}

@incollection{franzone2002degenerate,
  title={Degenerate evolution systems modeling the cardiac electric field at micro-and macroscopic level},
  author={Franzone, Piero Colli and Savar{\'e}, Giuseppe},
  booktitle={Evolution equations, semigroups and functional analysis},
  pages={49--78},
  year={2002},
  publisher={Springer}
}

@article{donato2015homogenization,
  title={Homogenization of diffusion problems with a nonlinear interfacial resistance},
  author={Donato, Patrizia and Le Nguyen, Kim Hang},
  journal={Nonlinear Differential Equations and Applications NoDEA},
  volume={22},
  number={5},
  pages={1345--1380},
  year={2015},
  publisher={Springer}
}

@article{lipton1998heat,
  title={Heat conduction in fine scale mixtures with interfacial contact resistance},
  author={Lipton, Robert},
  journal={SIAM Journal on Applied Mathematics},
  volume={58},
  number={1},
  pages={55--72},
  year={1998},
  publisher={SIAM}
}

@article{voyer2018dynamical,
  title={Dynamical modeling of tissue electroporation},
  author={Voyer, Damien and Silve, Aude and Mir, Lluis M and Scorretti, Riccardo and Poignard, Clair},
  journal={Bioelectrochemistry (Amsterdam, Netherlands)},
  volume={119},
  pages={98--110},
  year={2018}
}

@article{jerez2023derivation,
  title={Derivation of a bidomain model for bundles of myelinated axons},
  author={Jerez-Hanckes, Carlos and {\'A}vila, Isabel A Mart{\'\i}nez and Pettersson, Irina and Rybalko, Volodymyr},
  journal={Nonlinear Analysis: Real World Applications},
  volume={70},
  pages={103789},
  year={2023},
  publisher={Elsevier}
}

@article{lions1969quelques,
  title={Quelques m{\'e}thodes de r{\'e}solution des problemes aux limites non lin{\'e}aires, 1969},
  author={Lions, Jacques-Louis},
  journal={Dunod, Gauthiers-Villars, Paris},
  year={1969}
}

@book{showalter2013monotone,
  title={Monotone operators in Banach space and nonlinear partial differential equations},
  author={Showalter, Ralph Edwin},
  volume={49},
  year={2013},
  publisher={American Mathematical Soc.}
}

@article{showalter1974degenerate,
  title={Degenerate evolution equations and applications},
  author={Showalter, Ralph Edwin},
  journal={Indiana University Mathematics Journal},
  volume={23},
  number={8},
  pages={655--677},
  year={1974},
  publisher={JSTOR}
}

@article{krassowska2007modeling,
  title={Modeling electroporation in a single cell},
  author={Krassowska, Wanda and Filev, Petar D.},
  journal={Biophysical journal},
  volume={92},
  number={2},
  pages={404--417},
  year={2007},
  publisher={Elsevier}
}

@article{debruin1999modeling,
  title={Modeling electroporation in a single cell. I. Effects of field strength and rest potential},
  author={DeBruin, Katherine A and Krassowska, Wanda},
  journal={Biophysical journal},
  volume={77},
  number={3},
  pages={1213--1224},
  year={1999},
  publisher={Elsevier}
}

@article{neu1999asymptotic,
  title={Asymptotic model of electroporation},
  author={Neu, John C. and Krassowska, Wanda},
  journal={Physical review E},
  volume={59},
  number={3},
  pages={3471},
  year={1999},
  publisher={APS}
}

@article{leguebe2014conducting,
  title={Conducting and permeable states of cell membrane submitted to high voltage pulses: Mathematical and numerical studies validated by the experiments},
  author={Leguebe, Michael and Silve, Aude and Mir, Lluis M. and Poignard, Clair},
  journal={Journal of theoretical biology},
  volume={360},
  pages={83--94},
  year={2014},
  publisher={Elsevier}
}

@article{jaramillo2023phase,
  title={Phase-field model of bilipid membrane electroporation},
  author={Jaramillo-Aguayo, Pedro and Collin, Annabelle and Poignard, Clair},
  journal={Journal of Mathematical Biology},
  volume={87},
  number={1},
  pages={18},
  year={2023},
  publisher={Springer}
}

@article{jankowiak2020comparison,
  title={Comparison and calibration of different electroporation models. Application to rabbit livers experiments},
  author={Jankowiak, Gaspard and Taing, C{\'e}cile and Poignard, Clair and Collin, Annabelle},
  journal={ESAIM: Proceedings and Surveys},
  volume={67},
  pages={242--260},
  year={2020},
  publisher={EDP Sciences}
}

@incollection{faella2007memory,
  title={Memory effects arising in the homogenization of composites with inclusions},
  author={Faella, Luisa and Monsurro, Sara},
  booktitle={Topics on Mathematics for Smart Systems},
  pages={107--121},
  year={2007},
  publisher={World Scientific}
}

@article{jose2009homogenization,
  title={Homogenization of a parabolic problem with an imperfect interface},
  author={Jose, Editha C.},
  journal={Rev. Roumaine Math. Pures Appl},
  volume={54},
  number={3},
  pages={189--222},
  year={2009}
}

@article{donato2010corrector,
  title={Corrector results for a parabolic problem with a memory effect},
  author={Donato, Patrizia and Jose, Editha C.},
  journal={ESAIM: Mathematical Modelling and Numerical Analysis},
  volume={44},
  number={3},
  pages={421--454},
  year={2010},
  publisher={EDP Sciences}
}

@article{ivorra2010electrical,
  title={Electrical modeling of the influence of medium conductivity on electroporation},
  author={Ivorra, Antoni and Villemejane, Julien and Mir, Lluis M},
  journal={Physical Chemistry Chemical Physics},
  volume={12},
  number={34},
  pages={10055--10064},
  year={2010},
  publisher={Royal Society of Chemistry}
}

@inproceedings{ivorra2009electric,
author="Ivorra, Antoni
and Mir, Lluis M.
and Rubinsky, Boris",
editor="D{\"o}ssel, Olaf
and Schlegel, Wolfgang C.",
title="Electric Field Redistribution due to Conductivity Changes during Tissue Electroporation: Experiments with a Simple Vegetal Model",
booktitle="World Congress on Medical Physics and Biomedical Engineering, September 7 - 12, 2009, Munich, Germany",
year="2010",
publisher="Springer Berlin Heidelberg",
address="Berlin, Heidelberg",
pages="59--62",
abstract="Electroporation, or electropermeabilization, is the phenomenon in which cell membrane permeability to ions and macromolecules is increased by exposing the cell to short high electric field pulses. In living tissues, such permeabilization boost can be used in order to enhance the penetration of drugs (electrochemotherapy) or DNA plasmids (electrogenetherapy) or to destroy undesirable cells (irreversible electroporation). During the application of the high voltage pulses required for in vivo electroporation treatments, the conductivity of the involved tissues increases due to the electroporation phenomenon. This alteration results in a redistribution of the electric field magnitude that should be taken into account at treatment planning in order to foresee the areas that will be treated by electroporation. In the last five years some authors have indeed started to include such conductivity alteration in their simulation models. However, little experimental evidence has been provided to support the fact that conductivity changes really have a significant role on the electric field distribution. By reporting experiments on potato tuber, here we show that conductivity increase due to electroporation has indeed a significant physiological effect on the result of the application of the pulses. For instance, we noticed that by taking into account such conductivity alteration during simulations the error in electroporated area estimation went down from 30 {\%} to 3 {\%} in a case in which electroporation was performed with two parallel needles. Furthermore we also show that the field redistribution process occurs in two stages: an immediate and fast (<{\thinspace}5 $\mu$s) redistribution after the pulse onset, probably only held up by the cell membrane charging process, and a slower, and less significant, redistribution afterwards probably related to slow, and moderate, changes in tissue conductivity during the pulse.",
isbn="978-3-642-03895-2"
}

@article{ammari2017towards,
  title={Towards monitoring critical microscopic parameters for electropermeabilization},
  author={Ammari, Habib and Widlak, Thomas and Zhang, Wenlong},
  journal={Quarterly of Applied Mathematics},
  volume={75},
  number={1},
  pages={1--17},
  year={2017}
}

@article{holmbom1997homogenization,
  title={Homogenization of parabolic equations an alternative approach and some corrector-type results},
  author={Holmbom, Anders},
  journal={Applications of Mathematics},
  volume={42},
  number={5},
  pages={321--343},
  year={1997},
  publisher={Springer}
}

@article{piatnitski2017homogenization,
  title={Homogenization of biomechanical models for plant tissues},
  author={Piatnitski, Andrey and Ptashnyk, Mariya},
  journal={Multiscale Modeling \& Simulation},
  volume={15},
  number={1},
  pages={339--387},
  year={2017},
  publisher={SIAM}
}

@article{ptashnyk2015locally,
  title={Locally periodic unfolding method and two-scale convergence on surfaces of locally periodic microstructures},
  author={Ptashnyk, Mariya},
  journal={Multiscale Modeling \& Simulation},
  volume={13},
  number={3},
  pages={1061--1105},
  year={2015},
  publisher={SIAM}
}

@article{amar2013hierarchy,
  title={A hierarchy of models for the electrical conduction in biological tissues via two-scale convergence: The nonlinear case},
  author={Amar, Micol and Andreucci, Daniele and Bisegna, Paolo and Gianni, Roberto and others},
    volume = {26},
    journal = {Differential and Integral Equations},
    number = {9/10},
    publisher = {Khayyam Publishing, Inc.},
    pages = {885 -- 912},
    year = {2013},
    doi = {10.57262/die/1372858555},
    URL = {https://doi.org/10.57262/die/1372858555}
}

@article{amar2004evolution,
  title={Evolution and memory effects in the homogenization limit for electrical conduction in biological tissues},
  author={Amar, Micol and Andreucci, Daniele and Gianni, Roberto and Bisegna, Paolo},
  journal={Mathematical Models and Methods in Applied Sciences},
  volume={14},
  number={09},
  pages={1261--1295},
  year={2004},
  publisher={World Scientific}
}

@article{hummel2000homogenization,
  title={Homogenization for heat transfer in polycrystals with interfacial resistances},
  author={Hummel, Hans-Karl},
  journal={Applicable Analysis},
  volume={75},
  number={3-4},
  pages={403--424},
  year={2000},
  publisher={Taylor \& Francis}
}

@article{perrins1979transport,
  title={Transport properties of regular arrays of cylinders},
  author={Perrins, W.T. and McKenzie, David R. and McPhedran, Ross C.},
  journal={Proceedings of the Royal Society of London. A. Mathematical and Physical Sciences},
  volume={369},
  number={1737},
  pages={207--225},
  year={1979},
  publisher={The Royal Society London}
}

@book{ciarlet2025linear,
  title={Linear and nonlinear functional analysis with applications},
  author={Ciarlet, Philippe G.},
  year={2025},
  publisher={SIAM}
}

@article{corovic2013modeling,
  title={Modeling of electric field distribution in tissues during electroporation},
  author={Corovic, Selma and Lackovic, Igor and Sustaric, Primoz and Sustar, Tomaz and Rodic, Tomaz and Miklavcic, Damijan},
  journal={Biomedical engineering online},
  volume={12},
  number={1},
  pages={16},
  year={2013},
  publisher={Springer}
}

@article{pettersson2025nonlinear,
  title={Nonlinear multidomain model for nerve bundles with random structure},
  author={Pettersson, Irina and Rybalko, Antonina and Rybalko, Volodymyr},
  journal={arXiv preprint arXiv:2503.18477},
  year={2025},
publisher={accepted to SIMA}
}

@article{collin2018mathematical,
  title={Mathematical analysis and 2-scale convergence of a heterogeneous microscopic bidomain model},
  author={Collin, Annabelle and Imperiale, S{\'e}bastien},
  journal={Mathematical Models and Methods in Applied Sciences},
  volume={28},
  number={05},
  pages={979--1035},
  year={2018},
  publisher={World Scientific}
}

@article{kavian2014classical,
  title={“Classical” electropermeabilization modeling at the cell scale},
  author={Kavian, Otared and Legu{\`e}be, Michael and Poignard, Clair and Weynans, Lisl},
  journal={Journal of mathematical biology},
  volume={68},
  pages={235--265},
  year={2014},
  publisher={Springer}
}

@book {gilbarg2001elliptic,
    AUTHOR = {Gilbarg, David and Trudinger, Neil S.},
     TITLE = {Elliptic partial differential equations of second order},
    SERIES = {Classics in Mathematics},
      NOTE = {Reprint of the 1998 edition},
 PUBLISHER = {Springer-Verlag, Berlin},
      YEAR = {2001},
     PAGES = {xiv+517},
      ISBN = {3-540-41160-7},
   MRCLASS = {35-02 (35Jxx)},
  MRNUMBER = {1814364},
}

@article{liu2011existence,
    TITLE = {Existence and uniqueness of solutions to nonlinear evolution equations with locally monotone operators},
    JOURNAL = {Nonlinear Analysis: Theory, Methods \& Applications},
    VOLUME = {74},
    NUMBER = {18},
    PAGES = {7543-7561},
    YEAR = {2011},
    ISSN = {0362-546X},
    DOI = {https://doi.org/10.1016/j.na.2011.08.018},
    URL = {https://www.sciencedirect.com/science/article/pii/S0362546X11005712},
    AUTHOR = {Wei Liu},
    KEYWORDS = {Nonlinear evolution equation, Locally monotone, Pseudo-monotone, Navier–Stokes equation, Burgers equation, Porous medium equation, -Laplace equation, Reaction–diffusion equation},
    ABSTRACT = {In this paper we establish the existence and uniqueness of solutions for nonlinear evolution equations on a Banach space with locally monotone operators, which is a generalization of the classical result for monotone operators. In particular, we show that local monotonicity implies pseudo-monotonicity. The main results are applied to PDE of various types such as porous medium equations, reaction–diffusion equations, the generalized Burgers equation, the Navier–Stokes equation, the 3D Leray-α model and the p-Laplace equation with non-monotone perturbations.}
}

@inbook{kuchta2021solving,
  AUTHOR = {Miroslav Kuchta and Kent-Andre Mardal and Marie Rognes},
  TITLE = {Solving the EMI Equations using Finite Element Methods},
  ABSTRACT = {This chapter discusses 2 X 2 symmetric variational formulations and associated finite element methods for the EMI equations. We demonstrate that the presented methods converge at expected rates, and compare the approaches in terms of approximation of the transmembrane potential. Overall, the choice of which formulation to employ for solving EMI models becomes largely a matter of desired accuracy and available computational resources.},
  YEAR = {2021},
  JOURNAL = {Modeling Excitable Tissue: The EMI Framework},
  PAGES = {56{\textendash}69},
  PUBLISHER = {Springer International Publishing},
  ADDRESS = {Cham},
  ISBN = {978-3-030-61157-6},
  URL = {https://doi.org/10.1007/978-3-030-61157-6_5},
  DOI = {10.1007/978-3-030-61157-6_5},
  EDITOR = {Aslak Tveito and Kent-Andre Mardal and Marie Rognes},
}

@article {allaire1992homogenization,
    AUTHOR = {Allaire, Gr\'egoire},
     TITLE = {Homogenization and two-scale convergence},
   JOURNAL = {SIAM J. Math. Anal.},
  FJOURNAL = {SIAM Journal on Mathematical Analysis},
    VOLUME = {23},
      YEAR = {1992},
    NUMBER = {6},
     PAGES = {1482--1518},
      ISSN = {0036-1410},
   MRCLASS = {35B27 (35B40 73B27)},
  MRNUMBER = {1185639},
MRREVIEWER = {Maria\ Luisa\ Mascarenhas},
       DOI = {10.1137/0523084},
       URL = {https://doi.org/10.1137/0523084},
}

@article{damlamian1995two,
author = {Allaire, Grégoire and Damlamian, Alain and Hornung, Ulrich},
year = {1995},
month = {09},
pages = {},
title = {Two-Scale Convergence On Periodic Surfaces And Applications},
journal = {Mathematical Modelling of Flow through Porous Media}
}

@misc{BarattaEtal2023,
  title     = {{DOLFINx}: the next generation {FEniCS} problem solving environment},
  author    = {Baratta, Igor A. and Dean, Joseph P. and Dokken, J{\o}rgen S. and Habera, Michal and Hale, Jack S. and Richardson, Chris N. and Rognes, Marie E. and Scroggs, Matthew W. and Sime, Nathan and Wells, Garth N.},
  doi       = {10.5281/zenodo.10447666},
  year      = {2023},
  howpublished = {preprint}
}

@article{AlnaesEtal2014,
  title     = {Unified Form Language: A domain-specific language for weak formulations of partial differential equations},
  author    = {Alnaes, Martin S. and Logg, Anders and {\O}lgaard, Kristian B. and Rognes, Marie E. and Wells, Garth N.},
  journal   = {{ACM} Transactions on Mathematical Software},
  year      = {2014},
  volume    = {40},
  doi       = {10.1145/2566630},
}

\end{document}